\newtheorem{thm}{Theorem}[section]
\newtheorem{cor}[thm]{Corollary}
\newtheorem{lem}[thm]{Lemma}
\theoremstyle{definition}
\newtheorem{defn}[thm]{Definition}
\newtheorem{exmp}[thm]{Example}
\theoremstyle{remark}
\newtheorem{rem}[thm]{Remark}
\tikzstyle{V}=[draw, fill =black, circle, inner sep=0pt, minimum size=1.5pt]
\tikzstyle{wV}=[draw, fill =white, circle, inner sep=0pt, minimum size=4.5pt]
\tikzstyle{bV}=[draw, fill =black, circle, inner sep=0pt, minimum size=4.5pt]
\tikzstyle{over}=[draw=white,double=black,line width=2pt, double distance=.5pt]
\newcommand{\nc}{\newcommand}
\nc{\rnc}{\renewcommand}
\nc{\bb}[1]{{\mathbb #1}}
\nc{\bbA}{\bb{A}}\nc{\bbB}{\bb{B}}\nc{\bbC}{\bb{C}}\nc{\bbD}{\bb{D}}
\nc{\bbE}{\bb{E}}\nc{\bbF}{\bb{F}}\nc{\bbG}{\bb{G}}\nc{\bbH}{\bb{H}}
\nc{\bbI}{\bb{I}}\nc{\bbJ}{\bb{J}}\nc{\bbK}{\bb{K}}\nc{\bbL}{\bb{L}}
\nc{\bbM}{\bb{M}}\nc{\bbN}{\bb{N}}\nc{\bbO}{\bb{O}}\nc{\bbP}{\bb{P}}
\nc{\bbQ}{\bb{Q}}\nc{\bbR}{\bb{R}}\nc{\bbS}{\bb{S}}\nc{\bbT}{\bb{T}}
\nc{\bbU}{\bb{U}}\nc{\bbV}{\bb{V}}\nc{\bbW}{\bb{W}}\nc{\bbX}{\bb{X}}
\nc{\bbY}{\bb{Y}}\nc{\bbZ}{\bb{Z}}
\nc{\mbf}[1]{{\mathbf #1}}
\nc{\bfA}{\mbf{A}}\nc{\bfB}{\mbf{B}}\nc{\bfC}{\mbf{C}}\nc{\bfD}{\mbf{D}}
\nc{\bfE}{\mbf{E}}\nc{\bfF}{\mbf{F}}\nc{\bfG}{\mbf{G}}\nc{\bfH}{\mbf{H}}
\nc{\bfI}{\mbf{I}}\nc{\bfJ}{\mbf{J}}\nc{\bfK}{\mbf{K}}\nc{\bfL}{\mbf{L}}
\nc{\bfM}{\mbf{M}}\nc{\bfN}{\mbf{N}}\nc{\bfO}{\mbf{O}}\nc{\bfP}{\mbf{P}}
\nc{\bfQ}{\mbf{Q}}\nc{\bfR}{\mbf{R}}\nc{\bfS}{\mbf{S}}\nc{\bfT}{\mbf{T}}
\nc{\bfU}{\mbf{U}}\nc{\bfV}{\mbf{V}}\nc{\bfW}{\mbf{W}}\nc{\bfX}{\mbf{X}}
\nc{\bfY}{\mbf{Y}}\nc{\bfZ}{\mbf{Z}}
\nc{\bfa}{\mbf{a}}\nc{\bfb}{\mbf{b}}\nc{\bfc}{\mbf{c}}\nc{\bfd}{\mbf{d}}
\nc{\bfe}{\mbf{e}}\nc{\bff}{\mbf{f}}\nc{\bfg}{\mbf{g}}\nc{\bfh}{\mbf{h}}
\nc{\bfi}{\mbf{i}}\nc{\bfj}{\mbf{j}}\nc{\bfk}{\mbf{k}}\nc{\bfl}{\mbf{l}}
\nc{\bfm}{\mbf{m}}\nc{\bfn}{\mbf{n}}\nc{\bfo}{\mbf{o}}\nc{\bfp}{\mbf{p}}
\nc{\bfq}{\mbf{q}}\nc{\bfr}{\mbf{r}}\nc{\bfs}{\mbf{s}}\nc{\bft}{\mbf{t}}
\nc{\bfu}{\mbf{u}}\nc{\bfv}{\mbf{v}}\nc{\bfw}{\mbf{w}}\nc{\bfx}{\mbf{x}}
\nc{\bfy}{\mbf{y}}\nc{\bfz}{\mbf{z}}
\nc{\mcal}[1]{{\mathcal #1}}
\nc{\calA}{\mcal{A}}\nc{\calB}{\mcal{B}}\nc{\calC}{\mcal{C}}\nc{\calD}{\mcal{D}}
\nc{\calE}{\mcal{E}} \nc{\calF}{\mcal{F}}\nc{\calG}{\mcal{G}}\nc{\calH}{\mcal{H}}
\nc{\calI}{\mcal{I}}\nc{\calJ}{\mcal{J}}\nc{\calK}{\mcal{K}}\nc{\calL}{\mcal{L}}
\nc{\calM}{\mcal{M}}\nc{\calN}{\mcal{N}}\nc{\calO}{\mcal{O}}\nc{\calP}{\mcal{P}}
\nc{\calQ}{\mcal{Q}}\nc{\calR}{\mcal{R}}\nc{\calS}{\mcal{S}}\nc{\calT}{\mcal{T}}
\nc{\calU}{\mcal{U}}\nc{\calV}{\mcal{V}}\nc{\calW}{\mcal{W}}\nc{\calX}{\mcal{X}}
\nc{\calY}{\mcal{Y}}\nc{\calZ}{\mcal{Z}}
\nc{\fA}{\frak{A}}\nc{\fB}{\frak{B}}\nc{\fC}{\frak{C}} \nc{\fD}{\frak{D}}
\nc{\fE}{\frak{E}}\nc{\fF}{\frak{F}}\nc{\fG}{\frak{G}}\nc{\fH}{\frak{H}}
\nc{\fI}{\frak{I}}\nc{\fJ}{\frak{J}}\nc{\fK}{\frak{K}}\nc{\fL}{\frak{L}}
\nc{\fM}{\frak{M}}\nc{\fN}{\frak{N}}\nc{\fO}{\frak{O}}\nc{\fP}{\frak{P}}
\nc{\fQ}{\frak{Q}}\nc{\fR}{\frak{R}}\nc{\fS}{\frak{S}}\nc{\fT}{\frak{T}}
\nc{\fU}{\frak{U}}\nc{\fV}{\frak{V}}\nc{\fW}{\frak{W}}\nc{\fX}{\frak{X}}
\nc{\fY}{\frak{Y}}\nc{\fZ}{\frak{Z}}
\nc{\fa}{\frak{a}}\nc{\fb}{\frak{b}}\nc{\fc}{\frak{c}} \nc{\fd}{\frak{d}}
\nc{\fe}{\frak{e}}\nc{\fFf}{\frak{f}}\nc{\fg}{\frak{g}}\nc{\fh}{\frak{h}}
\nc{\fri}{\frak{i}}\nc{\fj}{\frak{j}}\nc{\fk}{\frak{k}}\nc{\fl}{\frak{l}}
\nc{\fm}{\frak{m}}\nc{\fn}{\frak{n}}\nc{\fo}{\frak{o}}\nc{\fp}{\frak{p}}
\nc{\fq}{\frak{q}}\nc{\fr}{\frak{r}}\nc{\fs}{\frak{s}}\nc{\ft}{\frak{t}}
\nc{\fu}{\frak{u}}\nc{\fv}{\frak{v}}\nc{\fw}{\frak{w}}\nc{\fx}{\frak{x}}
\nc{\fy}{\frak{y}}\nc{\fz}{\frak{z}}
\newcommand{\changjian}[1]{\textcolor{red}{ changjian: #1 }}
\newcommand{\gufang}[1]{\textcolor{green}{ gufang: #1 }}
\nc{\al}{\alpha}
\nc{\ep}{\epsilon}
\nc{\la}{\lambda}
\nc{\be}{\beta}
\nc{\de}{\delta}
\nc{\na}{\nabla}
\nc{\nap}{{\na_+}}
\nc{\nam}{{\na_-}}
\nc{\lefta}{\leftarrow}
\nc{\wt}{\widetilde}
\DeclareMathOperator{\stab}{stab}
\DeclareMathOperator{\Attr}{Attr}
\DeclareMathOperator{\FAttr}{FAttr}
\DeclareMathOperator{\supp}{supp}
\DeclareMathOperator{\Aut}{Aut}
\DeclareMathOperator{\pt}{pt}
\DeclareMathOperator{\Ext}{Ext}
\DeclareMathOperator{\Pic}{Pic}
\DeclareMathOperator{\rank}{rank}
\DeclareMathOperator{\SL}{SL}
\DeclareMathOperator{\Lie}{Lie}
\DeclareMathOperator{\opp}{opp}
\DeclareMathOperator{\Max}{Max}
\DeclareMathOperator{\Min}{Min}
\DeclareMathOperator{\gr}{gr}
\DeclareMathOperator{\Hilb}{Hilb}
\nc{\stp}[2]{\stab^{+, {#1}}_{#2}}   
\nc{\stm}[2]{\stab^{-,{#1}}_{#2}} 
\nc{\stabp}[1]{\stp{\na_-}{#1}} 
\nc{\stabm}[1]{\stm{\na_+}{#1}} 
\nc{\fstab}{\fs\ft\fa\fb}
\DeclareMathOperator{\ob}{b}
\DeclareMathOperator{\oR}{R}
\DeclareMathOperator{\aff}{aff}
\DeclareMathOperator{\Coh}{Coh}
\DeclareMathOperator{\Mod}{Mod}
\DeclareMathOperator{\Frac}{Frac}
\newcommand{\inj}{\hookrightarrow}
\def\angl#1{{\langle #1\rangle}}
\newcommand{\frakg}{\mathfrak{g}}
\newcommand{\ZZ}{\mathbb{Z}}
\newcommand{\Gm}{\mathbb{G}_m}
\DeclareMathOperator{\Hom}{Hom}
\title[Wall-crossings and categorification]{Wall-crossings and a categorification of $K$-theory stable bases of the Springer resolution}
\author[C.~Su]{Changjian~Su}
\email{changjiansu@gmail.com}
\address{Department of Mathematics, University of Toronto, 40 St. George St., Room 6290,
Toronto, ON, M5S 2E4, Canada}
\author[G.~Zhao]{Gufang~Zhao}
\email{gufangz@unimelb.edu.au}
\address{School of Mathematics and Statistics, The University of Melbourne, 813 Swanston Street, Parkville VIC 3010, Australia}
\author[C.~Zhong]{Changlong~Zhong}
\email{czhong@albany.edu}
\address{State University of New York at Albany, 1400 Washington Ave, ES 110, Albany, NY, 12222, USA}
\keywords{Stable basis, Springer resolution, Hecke algebra, Verma module, wall crossing}
\subjclass[2020]{Primary 19L47, 20C08, 14M15; Secondary 17B50}
\begin{document}

\begin{abstract}
We compare the $K$-theory stable bases of the Springer resolution associated to different affine Weyl alcoves. We prove that (up to relabelling) the change of alcoves operators are given by the Demazure-Lusztig operators in the affine Hecke algebra. We then show that these bases are categorified by the Verma modules  of the Lie algebra, under the localization of Lie algebras in positive characteristic of Bezrukavnikov, Mirkovi\'c, and Rumynin. 
As an application, we prove that the wall-crossing matrices of the $K$-theory stable bases coincide with the monodromy matrices of the quantum cohomology of the Springer resolution.
\end{abstract}
\setcounter{tocdepth}{1}
\maketitle

\section{Introduction}

For a symplectic resolutions $X$ with symplectic form $\omega$ in the sense of \cite{Kal09}, 
the stable basis defined in \cite{MO} has found more and more applications. It is the crucial ingredient in the work of Maulik and Okounkov in constructing geometrical $R$-matrices, satisfying the Yang--Baxter equations. These $R$-matrices induces a Yangian action on the cohomology of Nakajima quiver varieties, so that the quantum connection of the quiver variety is identified 
 with the trigonometric Casimir connection for the Yangian. 
In the case when $X$ is the cotangent bundle of the flag variety  (the Springer resolution) the stable basis is computed by the first-named author in \cite{Su17}. Using this, he computed the quantum connection of the cotangent bundle of partial flag varieties \cite{Su16} and generalized the result in \cite{BMO}. Pulling back the stable basis of the Springer resolution to the flag variety, one gets the Chern--Schwartz--MacPherson classes \cite{M} of the Schubert cells, see \cite{AMSS17, RV15}. Combining this with the fact that the stable basis are the characteristic cycles of certain $\calD$-modules on the flag variety, the authors in \cite{AMSS17} proved the non-equivariant version of the positivity conjecture in \cite{AM16}.

The $K$-theoretic generalization of the stable basis is also undergoing significant developments, a 
survey of which can be found in \cite{O15, O18}. 

The definition of the $K$-theory stable bases for a symplectic resolution $(X, \omega)$  depends on a choice of a chamber in the Lie algebra of a maximal torus $A$ in $\Aut(X, \omega)$, an alcove in $\Pic(X)_\bbQ$, as well as an orientation of the tangent bundle $TX$ (see e.g. \S~\ref{sec:recoll}). Here a hyperplane configuration in $\Pic(X)_\bbQ$ naturally occurs.  

It is an interesting question to find the change of bases matrices for stable bases associated to different choice of chambers or alcoves. The case when $X=\Hilb_n(\bbC^2)$ is studied in \cite{GN}. If $X$ is a Nakajima quiver variety, such change of bases matrices are certain trigonometric $R$-matrices, and induced an action of  certain quantum loop algebra on the $K$-theory of Nakajima quiver varieties \cite{OS, RTV}, generalizing the construction by Nakajima \cite{N01}.
These played a crucial role in calculations of quantum $K$-theory of Nakajima quiver varieties \cite{O, OS}. 
For example, they fully determined the two sets  of the difference equations in the quantum $K$-theory of  Nakajima quiver varieties.

In the case when $X=T^*\calB$, the Springer resolution, the chamber structure is given by the Weyl chambers, and the alcove structure is given by the affine Weyl alcoves in $\Lambda_\bbQ:=\Pic(T^*\calB)_\bbQ$ in the dual of the Cartan Lie algebra. That is, the hyperplane configuration occurring in the definition of stable bases are the affine coroot hyperplanes $H_{\al^\vee, n}$ for some coroot $\al^\vee$ and $n\in \bbZ$, with the affine Weyl group $W_{\aff}$ being the  group generated by reflections. 

In \cite{SZZ} we calculated the stable bases of $T^*\calB$ associated to the fundamental alcove using the affine Hecke algebra actions via convolution \cite{CG}. 
We  identified the stable basis (resp. the fixed point basis)  with the standard basis (resp. the Casselman basis) in the Iwahori-invariants of unramified principal series for the $p$-adic Langlands dual groups. With this, in \cite{SZZ} we provided a $K$-theoretic interpretation of the Macdonald's formula for the spherical function \cite{C80} and the Casselman--Shalika formula for the spherical Whittaker function \cite{CS}. Pulling back the stable basis to the flag variety from its cotangent bundle, we get the motivic Chern classes \cite{BSY} of the Schubert cells \cite{AMSS19, FRW}. This connection is used to prove a series of conjectures about the Casselman basis, see \cite{AMSS19, BN11, BN17}.  

The goal of the present paper is to calculate the bases associated to an arbitrary choice of alcove, and find the change of bases operators for the bases associated to two different alcoves. 
We also determine the effect of changing the chambers. 

As applications, we prove that the $K$-theory stable basis is categorified by the Verma modules of the  quantization of $T^*\calB$ in positive characteristic \cite{BMR1}. Consequently, we prove that the change of bases operators induced by two different choices of alcoves coincide with the monodromy matrices of the quantum cohomology. 
These facts have been conjectured by Bezrukavnikov and Okounkov to hold for a general symplectic resolution $(X,\omega)$ \cite{O15}. The case when $X$ is the Nakajima quiver variety is proved in  \cite{BO} via a different method. However, to the best of our knowledge, the present paper provides one of the first examples where a connection between $K$-theory stable bases and quantizations in positive characteristic is established.

We now state our main result on the wall-crossings, without introducing too many notations and technicalities. 
For simplicity we fix the choice of the Weyl chamber to be the positive Weyl chamber $+$, and the orientation to be a canonical one given by the tangent bundle \S~\ref{sec:recoll}. For each choice of alcove $\na$, the basis elements are labeled by the torus fixed points on $T^*\calB$, which in turn are labeled by  elements of the Weyl group $W$. Hence the stable basis is denoted by $\{\stab^{+,T\calB,\na}_w\mid w\in W\}$.

Let $A$ be the maximal torus of $G$, and $A^\vee$ its Langlands dual. We use subindex $reg$ to denote the open complement to the coroot hyperplanes. Let $B'_{\aff}=\pi_1(A^\vee_{reg}/W)$. 
There is a quotient  $\bbC[q^{\pm1/2}][B'_{\aff}]\to \bbH$ from the group algebra to the affine Hecke algebra, and an action of $\bbH$ on $K_{A\times\bbC^*}(T^*\calB)$, constructed by Kazhdan-Lusztig \cite{KL} and Ginzburg \cite{CG}. This action describes the wall-crossings of stable bases completely.  This is the first main result in the present paper. See Theorem \ref{thm:wallcrossingHecke} for a precise statement.
\begin{thm}\label{Thm1:Intr}
	Assume two alcoves $\na_1, \na_2$  are adjacent and separated  by $H_{\al^\vee, n}$ for some positive coroot $\al^\vee$ and $n\in\bbZ$, and that $(\la_1, \al^\vee)<(\la_2, \al^\vee)$ for any $\la_i\in \na_i$. Let $b_{\na_1,\na_2}$ be the element in $\pi_1(\fh^*_{\bbC,reg}/W_{\aff})\subseteq B'_{\aff}$ represented by a positive path going from $\na_1$ to $ \na_2$. Then, the image of $b_{\na_2,\na_1}^{-1}$ in $\bbH$ sends the set $\{\calL_{-\rho}\otimes\stab^{+,T\calB,\na_1}_w\mid w\in W\}$ to $\{\calL_{-\rho}\otimes\stab^{+,T\calB,\na_2}_w\mid w\in W\}$, up to some (explicitly determined) scalars. 
\end{thm}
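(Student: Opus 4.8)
The plan is to reduce the statement to a computation about the action of a single generator of the affine braid group, and then combine the known stable basis formula from \cite{SZZ} with an induction on the weak order on alcoves. First I would recall that the affine braid group $B'_{\aff}$ is generated by the lifts $T_s$ of simple reflections $s$ together with the lattice $\Lambda$ (acting by translations), and that any positive minimal path crossing a single wall $H_{\al^\vee,n}$ can be expressed, after conjugating by an element of $W_{\aff}$ moving $\na_1$ to the fundamental alcove, as $w \tilde{s}_\beta w^{-1}$ for an appropriate simple (affine) reflection $s_\beta$ and $w\in W_{\aff}$. Thus it suffices to prove the statement when $\na_1$ is the fundamental alcove and $\na_2$ is an adjacent alcove across a single (finite or affine) simple wall; the general case follows by transporting the already-established formulas via the $W_{\aff}$-action, which on the geometry side is realized through tensoring by line bundles $\calL_\la$ and the lattice part of $\bbH$.

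The key step is then the base case. Here I would invoke the explicit description of $\stab^{+,T\calB,\na}_w$ for $\na$ the fundamental alcove in terms of Demazure--Lusztig operators acting on the fixed point class $[w_0]$ (or $[e]$), as established in \cite{SZZ}; this identifies the stable basis with the standard basis of the Iwahori-invariants (equivalently, with the standard basis of $\bbH$ under the Ginzburg--Kazhdan--Lusztig isomorphism $K_{A\times\bbC^*}(T^*\calB)\cong \bbH$ after suitable completion/localization). Crossing a single simple wall replaces the fundamental alcove by $s\cdot\na$, and I expect the corresponding change-of-alcove to be implemented precisely by the Demazure--Lusztig operator $T_s$ (up to the explicit normalization involving $\calL_{-\rho}$, which accounts for the $\rho$-shift built into the stable basis conventions and the half-integer powers $q^{\pm1/2}$). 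The twist by $\calL_{-\rho}$ is exactly what makes the operators land in $\bbH$ rather than only in a localization, so I would carefully track where this shift enters: on the fixed point $w$, the weight of the relevant line bundle contributes a factor $q^{(\la,\al^\vee)}$ or similar, and these are the "explicitly determined scalars" in the statement. A uniqueness/characterization argument for stable bases (support and normalization conditions, from \S~\ref{sec:recoll}) lets me verify that the class produced by applying $T_s^{-1}$ indeed satisfies the defining axioms for the $s\cdot\na$-stable basis, rather than having to recompute it from scratch.

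The main obstacle will be bookkeeping of the normalizations: matching the geometric normalization of stable bases across alcoves (which involves the attracting/repelling decomposition and the choice of polarization $T\calB$) with the algebraic normalization of the Demazure--Lusztig generators and of the affine braid group action on $\bbH$. In particular, the direction convention — that $b_{\na_2,\na_1}^{-1}$ rather than $b_{\na_1,\na_2}$ does the job, and the hypothesis $(\la_1,\al^\vee)<(\la_2,\al^\vee)$ — forces a sign/orientation check at the level of which of $T_s$ or $T_s^{-1}$ appears, and a careful analysis of how the wall-crossing interacts with the grading by $q$. I would isolate this into a lemma comparing, for a single wall, the two stable bases term-by-term at each fixed point $w\in W$, splitting into the cases $w\alpha>0$ and $w\alpha<0$ (equivalently $w<ws$ or $w>ws$ in the relevant order), which is where the Demazure--Lusztig operator's two-term formula naturally appears; once that lemma is in place, the inductive globalization over all alcoves is essentially formal, using that the $T_s$ satisfy the braid relations so that the resulting operator depends only on the homotopy class of the path in $\fh^*_{\bbC,reg}/W_{\aff}$.
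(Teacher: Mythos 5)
Your proposal shares the broad contour of the paper's argument---establish wall-crossing for simple walls at the fundamental alcove, then globalize---but the two places where you declare the argument to be routine are precisely where the paper does the heavy lifting, and I don't think they can be dispensed with.

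First, the globalization. You write that once the base case is in place, ``the inductive globalization over all alcoves is essentially formal, using that the $T_s$ satisfy the braid relations.'' This is not so. If $\nabla_1$ and $\nabla_2$ are adjacent alcoves at distance $k\geq 1$ from the fundamental alcove, expressing $b_{\nabla_2,\nabla_1}$ as a conjugate $\wt{w}\,\wt{s}\,\wt{w}^{-1}$ and then ``transporting'' the base case requires knowing that the operator $\wt{w}^{-1}$ sends the $\nabla_1$-stable basis to the $\nabla_+$-stable basis---but this is exactly (a special case of) the theorem being proved, and unwinding $\wt{w}^{-1}$ into simple reflections one crosses walls that are generically not walls of $\nabla_+$. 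The paper's actual induction (in Theorem~\ref{thm:wallcrossingHecke}) at each step must cross a wall $H_{\beta^\vee,0}$ with $\beta = x\alpha$ \emph{not} simple, and this is handled in Theorem~\ref{thm:nonsimplewall} via a genuinely non-Hecke-theoretic ingredient: the left $W$-action on $K_T(T^*\calB)$ of \S~\ref{subsec:WeylAction} (which changes the chamber, not the alcove), together with the fact from \cite[Theorem~1]{OS} (Lemma~\ref{lem:coefficients}) that the coefficients $f_y^{\nabla_1\leftarrow\nabla_2}$ at a wall $H_{\alpha^\vee,0}$ lie in $K_{\mathbb{C}^*}(\pt)$, i.e., are $A$-independent and hence $W$-invariant. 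Your sketch does not address this and conflates the $W_{\aff}$-action on alcoves with the geometric $W$-action on equivariant $K$-theory; the translation part is indeed a line-bundle twist, but the finite $W$-part is not realized by any Hecke-algebra element that preserves the chamber.

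Second, the base case. You propose to apply $T_s^{-1}$ and then check the three defining axioms of a stable basis for $s\nabla$, invoking uniqueness. The support and normalization checks are fine, but verifying the degree axiom at an arbitrary fixed point $v$ for the class $T_s^{-1}(\stab^{\nabla}_w)$ is where the real work lies, and nothing in your sketch says how you would control the relevant Newton polygons. The paper avoids ever verifying the degree axiom for a candidate class: it uses the two-term structure and $A$-independence from \cite{OS} to reduce to determining a single coefficient $f_y\in K_{\mathbb{C}^*}(\pt)$, which it then pins down by a delicate degree/limit argument (Lemma~\ref{lem:generalD}) applied not to the stable basis but to the operator $D_\alpha$, followed by coefficient comparison. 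Your more direct route might work, but it is not a simplification, it is an alternative verification that would need to be carried out in detail---and it would effectively be re-deriving (rather than invoking) the content of Lemma~\ref{lem:coefficients}. As written, the proposal therefore has two substantive gaps: the formalization of the globalization, and the missing Newton-polygon analysis for the degree axiom.
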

As an immediate consequence of Theorem~\ref{Thm1:Intr}, we proved that the standard objects in the quantizations of $T^*\calB$ in positive characteristic categorify the stable bases. 
More precisely, we have a categorification of the stable basis, well defined for the Springer resolution $T^*\calB_{\bbZ}$ over $\bbZ$. When base changing to characteristic zero and passing to the Grothendieck group, we get the usual stable basis. When base changing to positive characteristic, we get localization of Verma modules via the equivalence \cite{BM,BMR1, BMR2}.

\begin{thm}[Theorems~\ref{thm:IntStable} and   \ref{Thm:cat}]
	For any $\lambda\in \Lambda_\bbQ$ not on any hyperplanes $H_{\alpha^\vee, n}$, we define the set of objects $\{\fstab^{\bbZ}_{\lambda}(w)\mid w\in W\}$ in the derived category $D^b_{A_\bbZ\times(\Gm)_{\bbZ}}(T^*\calB_{\bbZ})$. 
	\begin{enumerate}
		\item Applying the derived base change functor $\otimes^L_{\bbZ}\bbC$ and then taking classes in the Grothendieck group, these objects become the $K$-theory stable basis for the alcove containing $\lambda$. 
		\item Applying the base change functor $\otimes^L_{\bbZ}k$ for an algebraically closed field $k$ of characteristic $p$ greater than the Coxeter number, up to $\lambda=-\frac{\lambda'+\rho}{p}$, these objects become the image of Verma modules over $U(\fg_k)$ with Harish-Chandra central character $\lambda'+2\rho$, under the localization functor $\gamma^{\lambda'}$ of \cite{BMR2} (recalled in \S~\ref{subsec:babyV}).
	\end{enumerate}
\end{thm}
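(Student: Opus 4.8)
The plan is to construct the objects $\fstab^{\bbZ}_{\lambda}(w)$ by transporting one explicit ``seed'' object through the integral, categorified Demazure--Lusztig operators along a gallery in the affine Weyl arrangement, and then to extract properties (1) and (2) from the two base-change functors: over $\bbC$ using Theorem~\ref{thm:wallcrossingHecke}, and over $k$ using the localization of \cite{BMR1, BMR2}. Fix the distinguished alcove $\na_0$ of \cite{SZZ} and a point $\lambda_0\in\na_0$. There the classes $\calL_{-\rho}\otimes\stab^{+,T\calB,\na_0}_w$ are obtained from the $K$-class of an explicit twisted structure sheaf by applying the finite Demazure--Lusztig operators in $\bbH$; I take $\fstab^{\bbZ}_{\lambda_0}(w)$ to be the image of the evident integral model of that twisted structure sheaf under the corresponding integral convolution kernels, which are structure sheaves of components of the Steinberg variety over $\bbZ$. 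For an arbitrary $\lambda$ lying in an alcove $\na$, choose a positive gallery from $\na_0$ to $\na$; it represents a class $b\in B'_{\aff}$, and I set $\fstab^{\bbZ}_{\lambda}(w)$ to be the image of $b^{-1}$ (the composite of the corresponding integral wall-crossing kernels) applied to $\fstab^{\bbZ}_{\lambda_0}(w)$, renormalized by the explicit scalars of Theorem~\ref{thm:wallcrossingHecke}. Independence of the chosen gallery is the statement that these integral kernels satisfy the affine braid relations; this is the integral form of the geometric affine braid group action on $D^b_{\Coh}(T^*\calB)$ of Bezrukavnikov and Riche, which descends to $\bbZ$ because all the kernels in question are structure sheaves of correspondences flat over $\bbZ$.

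\textbf{Part (1).} Because the relevant correspondences in $T^*\calB\times T^*\calB$ are flat over $\bbZ$, the functor $-\otimes^L_{\bbZ}\bbC$ is Tor-independent against each kernel and hence monoidal for the convolution product; so $\fstab^{\bbZ}_{\lambda}(w)\otimes^L_{\bbZ}\bbC$ is computed by applying the complex Demazure--Lusztig kernels to the complex seed. Its class in $K_{A\times\bbC^*}(T^*\calB)$ equals the result of acting by the corresponding element of $\bbH$ on the seed class, which by \cite{SZZ} is $\calL_{-\rho}\otimes\stab^{+,T\calB,\na_0}_w$. Applying Theorem~\ref{thm:wallcrossingHecke} once for each wall crossed along the gallery, this class becomes $\calL_{-\rho}\otimes\stab^{+,T\calB,\na}_w$; the scalar normalization built into the definition turns this into an equality, which after untwisting by $\calL_{-\rho}$ is exactly the $K$-theory stable basis for the alcove of $\lambda$.

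\textbf{Part (2).} For $p$ greater than the Coxeter number and $\lambda'$ regular, \cite{BMR1, BMR2} give an equivalence $\gamma^{\lambda'}\colon D^b\bigl(\Mod^{fg}_{\lambda'}(U(\fg_k))\bigr)\xrightarrow{\sim}D^b_{\Coh}(T^*\calB_k)$, in its $\Gm$-equivariant form, compatible with translation functors as $\lambda'$ moves among the alcoves of the fundamental box. I would use two inputs. First, $\gamma^{\lambda'}$ intertwines the geometric affine braid action of the first paragraph (now over $k$) with the action on the representation side by the reflection-across-a-wall (translation on/through a wall) functors; this is the compatibility of Bezrukavnikov--Mirkovi\'c--Riche and Bezrukavnikov--Riche, and applying it identifies $b^{-1}\star-$ with the corresponding composite of wall-crossing functors. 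Second, a seed identification: $\fstab^{\bbZ}_{\lambda_0}(w)\otimes^L_{\bbZ}k$ is the localization, under $\gamma^{\lambda'_0}$, of the Verma module with Harish-Chandra central character $\lambda'_0+2\rho$ corresponding to $\lambda_0=-\frac{\lambda'_0+\rho}{p}$ --- for $\lambda'_0$ deep in its alcove this reduces to the known computation that $\gamma$ of such a Verma is a shifted (twisted) structure sheaf, matched with our seed by the $\calL_{-\rho}$ twist, which absorbs the $\rho$-shift in the central character. Since Verma modules for weights in different alcoves of the same block are connected precisely by these wall-crossing functors, and crossing $H_{\alpha^\vee,n}$ in the positive direction corresponds under the dictionary $\lambda=-\frac{\lambda'+\rho}{p}$ to the correctly oriented wall-crossing on the representation side, induction along the gallery gives (2).

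\textbf{Main obstacle.} The crux is the bookkeeping in part (2): pinning down the dictionary between the affine coroot hyperplane arrangement $\{H_{\alpha^\vee,n}\}$ that governs the stable bases (with its Langlands-dual torus $A^\vee$ and the various $\rho$-shifts) and the alcove/box combinatorics of modular representation theory under $\lambda\mapsto-\frac{\lambda'+\rho}{p}$, and checking that the affine braid generators used here coincide, under $\gamma^{\lambda'}$, with the translation functors for which the Bezrukavnikov--Mirkovi\'c--Riche compatibility is stated. The remaining, more routine ingredient is the integrality: verifying the flatness needed for $-\otimes^L_{\bbZ}\bbC$ and $-\otimes^L_{\bbZ}k$ to commute with convolution, and for the affine braid relations to hold with $\bbZ$-coefficients.
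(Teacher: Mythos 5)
Your proposal follows essentially the same route as the paper: define the integral objects by transporting the seed $\calL_{-\rho}\otimes\calO_{T^*_{id}\calB_\bbZ}$ through the Bezrukavnikov--Riche integral affine braid action (the paper's \eqref{eqn:Z1}--\eqref{eqn:Z4}), then get part (1) from base change plus Theorems~\ref{lem:actiononstable} and \ref{thm:wallcrossingHecke}, and part (2) from base change plus the compatibility of $\gamma^{\lambda'}$ with the braid actions (Theorem~\ref{thm:loc_wall}) and the fact that $\gamma^{\lambda_0}Z^\fb(\lambda_0+2\rho)\cong\calO_{T^*_{id}\calB^{(1)}}$. The ``main obstacle'' you flag is exactly what the paper's \S\,8--9 spends most of its time on: the sign and orientation dictionary under $\lambda=-\tfrac{\lambda'+\rho}{p}$ (Remark~\ref{rmk:star_dot}), the opposite choice of lifting $W_{\aff}\to B_{\aff}$ forced by taking $\Delta^+$ to be the roots of $\fb$ rather than $\fb^-$ (Remark~\ref{rem:Left_right_J}), the inversion between $\Theta_\alpha$ and $\mathbf{I}_\alpha$ (Remark~\ref{rmk:inverse_BMR}), and the explicit scalars in \eqref{eqn:Z3}--\eqref{eqn:Z4} --- so the strategy is sound and matches the paper, but a complete argument has to actually pin these down rather than defer them.
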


In particular, under the duality between $A\times\Gm$-equivariant $K$-theory of $T^*\calB$ and the Iwahori-invariants of the principal series representation of the $p$-adic Langlands dual group, the standard objects, which are the Verma modules, are mapped to the standard objects, i.e.,  the characteristic functions. 
The restriction formula of \cite[Theorem~7.5]{SZZ} gives  a formula of the graded Ext between Verma modules and baby Verma modules with respect to the Koszul grading on \cite{BM}.

It has been well known that the $K$-theory lift of the monodromy of quantum connection of $T^*\calB$ defines an affine Hecke algebra action on the equivariant $K$-theory of $T^*\calB$ \cite{BMO}, which coincides with the one coming from \cite{BMR2}. An immediate consequence of Theorem~\ref{Thm:cat} is that the change of bases operator, induced by $K$-theory stable bases associated to different alcove, agrees with the monodromy operator in  $\pi_1(A^\vee_{reg}/W)$ of the quantum connection  of $T^*\calB$ (see Theorem \ref{thm:monodromystable} for a precise statement).

\subsection*{Organization of the paper}
In \S 2 we recall the definition and basic facts of stable bases, and in \S 3 we recall some basic facts of wall-crossings. In \S 4 we compute the formula of crossing walls determined by simple roots, which is generalized in \S 5 to non-simple walls. In \S 6 we recall  affine braid group actions on the derived categories of Springer resolutions, and use them to define categorified version of the stable basis. In \S~7, we show such basis coincides with Verma modules under the localization of Lie algebras in positive characteristic. As another consequence, we deduce in  \S~8 that wall-crossing of the stable basis agree with monodromy of quantum cohomology. In the Appendix we compute the wall crossing matrix in the case of $\SL(3,\bbC)$.

\section{Recollection of stable bases and the affine Hecke algebra}\label{sec:recoll}
\subsection{Notations}\label{sec:notation}
Let $G$ be a connected, semisimple, simply connected, complex linear algebraic group with a Borel subgroup $B$, whose roots are positive. Let $A\subset B$ be a maximal torus of $G$, $\fh$ be the Lie algebra of $A$, and $\Lambda$ (resp. $\Lambda^\vee$) the group of characters (resp. cocharacters) of $A$. Let $\rho$ denote the half sum of the positive roots. Let $\pm:=\fC_\pm\subset \Lambda^\vee\otimes_\bbZ\bbR$ denote the dominant/anti-dominant Weyl chambers in $\fh$. For any coroot $\alpha^\vee$, define the hyperplanes in $\mathfrak{h}^*_\bbR$ by \[H_{\alpha^\vee,n}=\{\lambda\in \mathfrak{h}_{\mathbb{R}}^*\mid(\lambda, \alpha^{\vee})=n\}, \quad n\in \bbZ.\]
We refer the connected components of 
\[\mathfrak{h}_{\mathbb{R}}^*\setminus (\underset{\al>0, n\in \bbZ}\cup H_{\alpha^\vee,n})\]
as the alcoves of this hyperplane arrangement. 
The walls of an alcove are the codimension-$1$ facets in the boundary of this alcove. 
The following alcove will be referred to as {\it the fundamental alcove}
\[\na_+:=\{\lambda\mid0<(\lambda, \alpha^\vee)<1, \text{ for any positive coroot } \alpha^\vee\}.\] 
In other words, it contains $\ep \rho$ for small $\ep>0$. Denote $\na_-=-\na_+. $ We say two alcoves $\na_1,\na_2$ are adjacent if they share a wall on some hyperplane $H_{\al^\vee, n}$.

We will use the same terminologies of alcoves, walls, and fundamental alcoves for their intersections with $\Lambda_\bbQ\subseteq\fh^*_{\bbR}$.

Denote by $\calB\simeq G/B$, the variety of all the Borel groups of $G$, and $T^*\calB$ (resp. $T\calB$) its cotangent bundle (resp. tangent bundle). For each $\la\in \Lambda$, there is an associated line bundle $\calL_\la$ on $\calB$ independent on the choice of $B$ \cite[\S6.1.11]{CG}. Pulling it back onto $T^*\calB$, we still denote it by $\calL_\la$. Therefore, we have identity 
\begin{equation}\label{eq:linebundle}
\Pic(T^*\calB)\otimes_\bbZ\bbR=\mathfrak{h}_{\mathbb{R}}^*.
\end{equation}

Let $\bbC^*$ act on $T^*\calB$ by $z\cdot(B', x)=(B', z^{-2}x)$, where $z\in \bbC^*$ and $(B', x)\in T^*\calB$. Let $q^{-1}$ denote the character of cotangent fiber under this action. We refer the readers to \cite{CG} for a beautiful account of the equivariant K-theory. Let $T=A\times \bbC^*$, and $K_{T}(\pt)\cong \bbZ[q^{1/2},q^{-1/2}][\Lambda]$. In this paper, we will consider $K_T(T^*\calB)$, which is a module over $K_T(\pt)$. The  $A$-fixed points of $T^*\calB$ are indexed by $W$;  each $w\in W$ determines the fixed point $wB\in \calB\subset T^*\calB$. We will just use $w$ to denote the corresponding fixed point $wB$. For each $w \in W$, denote $1_w\in K_{T}((T^*\calB)^A)\cong \oplus_{v\in W}K_T(\pt)$ the basis corresponding to $w$, and $\iota_w$ its image via the push-forward of  the embedding $i:w\to T^*\calB$, that is, $\iota_w=i_*(1)\in K_T(T^*\calB)$. It follows from the localization theorem \cite{CG} that $\{\iota_w|w\in W\}$ forms a basis for the localized equivariant K-theory $K_T(T^*\calB)_{\textit{loc}}:= K_T(T^*\calB)\otimes_{K_T(\pt)}\Frac K_T(\pt)$, where $\Frac K_T(\pt)$ denotes the fraction field of $K_T(\pt)$. For any vector space $V$ with a $T$-action, denote 
\[
\bigwedge {}^\bullet V=\sum_k(-1)^k\wedge^kV^\vee=\prod (1-e^{-\al})\in K_{T}(\pt),
\]
where the last product is over all the $T$-weights in $V$, counted with multiplicities. Note that this is not the standard notation for the wedge product because of the dual $^\vee$.

Recall there is a non-degenerate pairing $\langle\cdot, \cdot\rangle$ on  $K_T(T^*\calB)$ defined via localization:
	\begin{equation}\label{eqn:pairing}
	\langle\calF, \calG\rangle:=\sum_{w}\frac{\calF|_w\calG|_w}{\bigwedge {}^\bullet T_w(T^*\calB)}=\sum_{w}\frac{\calF|_w\calG|_w}{\prod_{\alpha>0}(1-e^{w\alpha})(1-qe^{-w\alpha})}\in \Frac K_T(pt),\end{equation}
	where $\calF, \calG\in  K_T(T^*\calB)$, and $\calF|_w$ denotes the pullback of $\calF$ to the fixed point $wB\in\calB\subset T^*\calB$.

\subsection{Definition of stable bases}

We recall the definition of stable bases of Maulik and Okounkov. 

Let $\fC$ be a chamber in $\Lambda^\vee_\bbR$. For any cocharacter $\sigma\in \fC$, the stable leaf (also called the attracting set) of the fixed point $w$ is 
\[
\Attr_\fC(w)=\{x\in T^*\calB\mid\underset{z\to 0}\lim \sigma(z)\cdot x=w\}.
\]
It  defines a partial order on $W$ as follows
\[
w\preceq_\fC v \quad \text{ if }\quad  \overline{\Attr_\fC(v)}\cap w\neq \emptyset.
\]
For example, with $\fC_+$  (resp. $\fC_-$) denoting the dominant chamber (resp. the anti-dominant chamber) where all the positive roots take positive (resp. negative) values, then 
\[
u\preceq_{\fC_+} v\iff u\le v, \quad \text{ and }\quad  u\preceq_{\fC_-}v \iff u\ge v,
\]
where $\le $ is the usual  Bruhat order of $W$. In this case, $\Attr_{\fC_+}(w)$ (resp. $\Attr_{\fC_-}(w)$) is equal to the conormal bundle of the Schubert cell $BwB/B$ (resp. opposite Schubert cell $B^-wB/B$, where $B^-$ is the opposite Borel subgroup) inside the flag variety $\calB$.
Denote the full attracting set
\[
\FAttr_\fC(v)=\bigcup_{w\preceq_{\fC} v}\Attr_\fC(w). 
\]

Define a polarization $T^{1/2}\in K_T(T^*\calB)$ to be an equivariant K-theory class such that the following identity holds,
\[
T^{1/2}+q^{-1}(T^{1/2})^\vee=T(T^*\calB)\in K_T(T^*\calB).
\]
Denote $T^{1/2}_{\opp}=q^{-1}(T^{1/2})^\vee$. We will mostly use the following two mutually opposite polarizations: $T\calB$ and $T^*\calB$, which lie in $K_{G\times \bbC^*}(T^*\calB)$.

Let $N_w:=T_w(T^*\calB)$ be the tangent space of $T^*\calB$ at the torus fixed point $wB$, and $T^{1/2}_w:=T^{1/2}|_w$. Each chamber $\fC$ determines a decomposition $N_w=N_{w,+}\oplus N_{w,-}$ of $N_w$ into $A$-weight spaces which are positive and negative with respect to $\fC$, and similarly $T^{1/2}_w=T^{1/2}_{w,+}\oplus T^{1/2}_{w,-}$. 
Then 
\[N_{w,-}=T^{1/2}_{w,-}\oplus q^{-1}(T^{1/2}_{w,+})^\vee,\]
and
\[N_{w,-}\ominus T^{1/2}_w=q^{-1}(T^{1/2}_{w,+})^\vee\ominus T^{1/2}_{w,+}.\]
Thus,
\[\left(\frac{\det N_{w,-}}{\det T^{1/2}_w}\right)^{1/2}=q^{-\frac{\rank T^{1/2}_{w,+}}{2}}\det (T^{1/2}_{w,+})^\vee\in K_T(\pt).\]

For any Laurent polynomial $f=\sum_{\mu\in \Lambda}f_\mu e^\mu\in K_T(\pt)$ with $f_\mu\in K_{\bbC^*}(\pt)$, define its {\it Newton polygon} to be 
\[
\deg_Af=\text{Convex hull }(\{\mu\mid f_\mu\neq 0\})\subset \Lambda\otimes_\bbZ\bbQ. 
\]

\begin{defn}\label{def:stable}\cite{OS, O} For any chamber $\fC$, polarization $T^{1/2}$,  alcove $\na$, there is a unique map  of $K_T(\pt)$-modules (called the stable envelope):
	\[
	\stab: K_{T}((T^*\calB)^{A})\to K_T(T^*\calB),
	\]
	satisfying the following three conditions. Denote $\stab^{\fC,T^{1/2}, \na}_w=\stab(1_w)$, then
	\begin{itemize}
		\item[(i)] (Support) $\supp( \stab^{\fC, T^{1/2},\na}_w)\subset \FAttr_{ \fC}(w)$.
		\item[(ii)] (Normalization) $\stab^{\fC,T^{1/2}, \na}_w|_w=(-1)^{\rank T^{1/2}_{w, +}}\left(\frac{\det N_{w,-}}{\det T_w^{1/2}}\right)^{1/2} \calO_{\Attr _\fC(w)}|_w.$
		\item[(iii)] (Degree) $\deg _A(\stab^{\fC, T^{1/2},\na}_w|_v)\subset \deg_A(\stab^{\fC,T^{1/2}, \na}_v|_v)+\calL|_v- \calL|_w$ for any $v\prec_\fC w, \calL\in \na$. 
	\end{itemize}
\end{defn}
\begin{rem}
	\begin{enumerate}
		\item[(a)] Using the identification in  \eqref{eq:linebundle}, $\calL\in\na$ is the same as a fractional line bundle. 
		\item[(b)] 
		The degree condition only depends on the choice of $\nabla$, not the fractional line bundle $\calL\in\na$ itself.  Moreover, the normalization condition does not depend on the alcove $\na$. 
		\item[(c)]
		We have duality \cite[Proposition 1]{OS}
		\begin{equation}
		\label{eq:dual}
		\langle \stab^{\fC, T^{1/2}, \na}_v, \stab^{-\fC, T^{1/2}_{\opp}, -\na}_w\rangle =\de_{v,w}. 
		\end{equation}
		\item[(d)]
		The existence of the K-theory stable bases also follows from the existence of the elliptic stable envelopes, see \cite{AO, O20a, O20b}.
	\end{enumerate}
\end{rem}
It follows immediately from the definition that we have the following lemma.
\begin{lem}\label{lem:dualequal} For any $w\in W$, we have
	\[
	\stab^{\fC, T^{1/2}, \na}_w|_w \cdot \stab^{-\fC, T^{1/2}_{\opp}, -\na}_w|_w=\bigwedge{}^\bullet T_w(T^*\calB). 
	\]
\end{lem}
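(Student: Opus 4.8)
The plan is to extract this identity directly from the normalization condition in the definition of the stable envelope together with the duality statement \eqref{eq:dual}. The key observation is that both factors on the left-hand side are restrictions to the \emph{same} fixed point $v$, where the stable envelope is pinned down explicitly by the normalization axiom, so nothing about the global support or degree conditions is actually needed.

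First I would write out the normalization condition for each of the two envelopes at $v$. For $\stab^{\fC,T^{1/2},\na}_v$ the polarization is $T^{1/2}$ and the chamber is $\fC$, so
\[
\stab^{\fC,T^{1/2},\na}_v|_v=(-1)^{\rank N^{1/2}_{v,+}}\left(\frac{\det N_{v,-}}{\det N^{1/2}_v}\right)^{1/2}\calO_{\Attr_\fC(v)}|_v,
\]
and $\calO_{\Attr_\fC(v)}|_v=\bigwedge{}^\bullet N_{v,+}$, the product of $(1-e^{-\mu})$ over the $\fC$-positive weights $\mu$ of $N_v$. For $\stab^{-\fC,T^{1/2}_{\opp},-\na}_v$ the chamber is reversed, so the roles of positive and negative weight spaces are swapped: $N_{v,+}$ for $-\fC$ equals $N_{v,-}$ for $\fC$, and the polarization is $T^{1/2}_{\opp}=q^{-1}(T^{1/2})^\vee$, whose restriction to $v$ is $q^{-1}(N^{1/2}_v)^\vee$, with $N^{1/2}_{v,\pm}$ for the opposite polarization-and-chamber being $q^{-1}(N^{1/2}_{v,\mp})^\vee$. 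Plugging these in gives
\[
\stab^{-\fC,T^{1/2}_{\opp},-\na}_v|_v=(-1)^{\rank N^{1/2}_{v,-}}\left(\frac{\det N_{v,+}}{\det\bigl(q^{-1}(N^{1/2}_v)^\vee\bigr)}\right)^{1/2}\bigwedge{}^\bullet N_{v,-}.
\]

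Then I would simply multiply the two expressions and simplify. The sign combines to $(-1)^{\rank N^{1/2}_{v,+}+\rank N^{1/2}_{v,-}}=(-1)^{\rank N^{1/2}_v}$, which is $(-1)^{\dim\calB}$, a fixed global sign independent of $v$; the two square-root factors multiply to
\[
\left(\frac{\det N_{v,-}\det N_{v,+}}{\det N^{1/2}_v\cdot\det\bigl(q^{-1}(N^{1/2}_v)^\vee\bigr)}\right)^{1/2}=\left(\frac{\det N_v}{\det N^{1/2}_v\cdot\det T^{1/2}_{v,\opp}}\right)^{1/2}=\left(\frac{\det N_v}{\det N_v}\right)^{1/2}=1,
\]
using the polarization identity $T^{1/2}+T^{1/2}_{\opp}=T(T^*\calB)$ restricted to $v$; and the remaining product $\bigwedge{}^\bullet N_{v,+}\cdot\bigwedge{}^\bullet N_{v,-}=\bigwedge{}^\bullet N_v=\bigwedge{}^\bullet T_v(T^*\calB)$. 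To nail down that the overall sign is $+1$ rather than $-1$, I would invoke \eqref{eq:dual} with $v=w$: the diagonal pairing $\langle\stab^{\fC,T^{1/2},\na}_v,\stab^{-\fC,T^{1/2}_{\opp},-\na}_v\rangle=1$, whose $v$-summand in \eqref{eqn:pairing} is exactly $\stab^{\fC,T^{1/2},\na}_v|_v\cdot\stab^{-\fC,T^{1/2}_{\opp},-\na}_v|_v\big/\bigwedge{}^\bullet T_v(T^*\calB)$, while every other summand with $w\ne v$ vanishes because $\stab^{\fC,T^{1/2},\na}_v$ is supported on $\FAttr_\fC(v)$ and $\stab^{-\fC,T^{1/2}_{\opp},-\na}_v$ on $\FAttr_{-\fC}(v)$, whose intersection meets only the fixed point $v$ (this is the standard transversality of opposite attracting sets). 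Hence the $v$-summand alone equals $1$, which is precisely the claimed identity.

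\textbf{The main obstacle} is the careful bookkeeping of how the square-root normalization factor transforms when one simultaneously reverses the chamber \emph{and} the polarization: one must check that $N^{1/2}_{v,+}$ for $(-\fC,T^{1/2}_{\opp})$ is indeed $q^{-1}(N^{1/2}_{v,-})^\vee$ and similarly for the minus part, and that the chosen square roots of the two normalization factors are genuinely reciprocal in $K_T(T^*\calB)$ (not merely up to sign). The cleanest way to sidestep sign ambiguities in the square roots is to deduce the identity up to sign by the algebraic manipulation above and then fix the sign via the diagonal case of the duality \eqref{eq:dual}, as indicated; alternatively one can appeal to \cite[Proposition 1]{OS} directly since Lemma~\ref{lem:dualequal} is essentially its diagonal consequence.
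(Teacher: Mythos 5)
Your Step~2 is exactly the paper's argument: the proof in the paper is the one-line assertion that the lemma follows from the duality \eqref{eq:dual} together with the pairing formula \eqref{eqn:pairing}, which unwinds to precisely your observation that in the localization sum $\langle\stab^{\fC,T^{1/2},\na}_v,\stab^{-\fC,T^{1/2}_{\opp},-\na}_v\rangle=1$ only the $w=v$ term survives because the two supports $\FAttr_\fC(v)$ and $\FAttr_{-\fC}(v)$ share no fixed point other than $v$. That argument is complete and self-contained, so the proposal is correct.

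Step~1 is therefore redundant, and it also contains two sign-level slips worth flagging even though they do not affect your conclusion. First, the Koszul resolution gives $\calO_{\Attr_\fC(v)}|_v=\bigwedge{}^\bullet N_{v,-}$, the exterior algebra factor on the \emph{normal} (repelling) directions, not $\bigwedge{}^\bullet N_{v,+}$; the final product $\bigwedge{}^\bullet N_{v,+}\cdot\bigwedge{}^\bullet N_{v,-}$ is unchanged, so this mislabel is harmless. Second, when you simultaneously reverse the chamber \emph{and} pass to the opposite polarization, the two reversals cancel: $N^{1/2}_{v,+}$ for $(-\fC,T^{1/2}_{\opp})$ is $q^{-1}\bigl(N^{1/2}_{v,+}\bigr)^\vee$, \emph{not} $q^{-1}\bigl(N^{1/2}_{v,-}\bigr)^\vee$. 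Consequently the two normalization signs are both $(-1)^{\rank N^{1/2}_{v,+}}$ and cancel to $+1$; there is no residual $(-1)^{\dim\calB}$, so the sign is not a genuine ambiguity of the square roots and already comes out correct from the direct computation.
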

We will mostly consider the following two cases
\[
\stp{\na}{w}:=\stab^{\fC_+, T\calB, \na}_w, \quad \stm{\na}{w}:=\stab^{\fC_-, T^*\calB, \na}_w. 
\]
From \cite[Lemma 3.2]{SZZ}, which follows immediately from Definition \ref{def:stable}(ii), we have
\begin{align}
\label{eq:normp}\stp{\na}{y}|_y&=q^{-\ell(y)/2}\prod_{\be>0, y\be<0}(q-e^{y\be})\prod_{\be>0,y\be>0}(1-e^{y\be}). \\
\label{eq:normm} \stm{\na}{y}|_y&=q^{\ell(y)/2}\prod_{\be>0, y\be<0}(1-e^{-y\be})\prod_{\be>0, y\be>0}(1-qe^{-y\be}). 
\end{align}
Moreover, it is easy to see from definition that 
\begin{equation}\label{eq:extremecases}
\stp{\na}{id}=[\calO_{T_{id}^*\calB}], \quad \textit{ and }\quad  \stm{\na}{w_0}=(-q^{1/2})^{\dim G/B}e^{2\rho}[\calO_{T_{w_0}^*\calB}],
\end{equation}
where $w_0\in W$ is the longest element in the Weyl group.

\subsection{The affine Hecke algebra}\label{sec:heckeaction}

We give a brief reminder on the Hecke algebra and Demazure-Lusztig operators.

Let $\bbH$ be the affine Hecke algebra, see \cite[\S 7.1]{CG}. Then we have the following well known Kazhdan-Lusztig and Ginzburg isomorphism \cite{KL,CG}
\begin{equation}\label{eq:iso}
\bbH \simeq K_{G\times \bbC^*}(Z),
\end{equation}
where $Z=T^*\calB\times_\calN T^*\calB$ is the Steinberg variety and $\calN\subset \frakg$ is the nilpotent cone. The right hand side $K_{G\times \bbC^*}(Z)$ has a convolution algebra structure. Let us recall one construction of this isomorphism \cite{R,L98}.

Let $\triangle(T^*\calB)\subset T^*\calB\times_{\calN} T^*\calB$ denote the diagonal copy of $T^*\calB$ inside the Steinberg variety $Z$. For any torus weight $\lambda$, let $\calO_\triangle(\lambda):=\calL_\lambda$, where $\calL_\lambda$ is the line bundle on $T^*\calB$ introduced in \S \ref{sec:notation}. For any simple root $\al$, let $P_\al$ be be the corresponding minimal parabolic subgroup and $\calP_\al=G/P_\al$. Let \[Y_\al=\calB\times_{\calP_\al}\calB\subset \calB\times\calB. \]
On $Y_\al$, the $A$-fixed points are of the form either $(w, w)$ or $(w, ws_\al)$ with $w\in W$. Let $T^*_{Y_\al}:=N^*_{\calB\times \calB/Y_\al}$ be the conormal bundle of $Y_\al$ inside $\calB\times\calB$. Let $\calO_{T_{Y_{\alpha}}^*}(\lambda,\mu)=\pi_1^*\calL_\lambda\otimes \pi_2^*\calL_\mu$, where $\pi_i:T^*_{Y_\al}\to  T^*\calB$ are the two projections. Let $T_\alpha$ be the usual generator of $\bbH$ that  satisfies $(T_\alpha+1)(T_\alpha-q)=0$. For the purpose of this paper, let us choose the isomorphism \eqref{eq:iso} as follows 
\[T_\alpha\mapsto -[\calO_{\bigtriangleup}]-[\calO_{T_{Y_{\alpha}}^*}(0,\alpha)],\text{\quad and \quad } e^\lambda\mapsto \calO_\triangle(\lambda).\]
This is conjugate to the one in \cite[Proposition 6.1.5]{R} by $\calL_\rho$, since our $q$ and  $T_\alpha$ are the same as $v^2$ and $vT_\alpha$ in \cite{R}, respectively. 

The convolution algebra $K_{G\times \bbC^*}(Z)$ is a subalgebra of $K_{A\times \bbC^*}(Z)$, and hence  defines two actions of $\bbH$ on $K_{A\times \bbC^*}(T^*\calB)$, namely, by convolution from the left, or from the right (which therefore is a right action). The operators on $K_{A\times \bbC^*}(T^*\calB)$  corresponding to these two convolutions with $T_\al$ will then be denoted by $T_\alpha$ and $T'_\alpha$ respectively,  following the notations used in \cite{SZZ}\footnote{When restricting our $T_\alpha$ to K-theory of $G/B$, our $T_\alpha^{-1}$ coincides with the $\calT_\al^\vee$ in \cite[p. 3]{AMSS19}}. That is, for any $\calF\in K_{A\times \bbC^*}(T^*\calB)$,
\[T_\alpha(\calF)=-\calF-\pi_{1*}(\pi_2^*\calF\otimes\pi_2^*\calL_\alpha),\textit{\quad and \quad} T'_\alpha(\calF)=-\calF-\pi_{2*}(\pi_1^*\calF\otimes \pi_2^*\calL_\alpha).\]
Let $D_\alpha:=-T_\alpha-1$ and $D'_\alpha:=-T'_\alpha-1$. Then we have adjointness (\cite[Lemma 4.4]{SZZ})
\begin{equation}
\label{eq:adj}
\langle D_\al(\calF), \calG\rangle =\langle \calF, D'_\al (\calG)\rangle, \quad \forall \calF,\calG\in K_{T}(T^*\calB). 
\end{equation}

The relation between these two operators $T_\al$ and $T'_\al$ can be deduced as follows. Since $[\calO_{T_{Y_\alpha}^*}(\rho,-\rho+\alpha)]=[\calO_{T_{Y_\alpha}^*}(-\rho+\alpha,\rho)]$ (see \cite[Lemma 1.5.1]{R}), the two operators on $K_{A\times \bbC^*}(T^*\calB)$, given by left and right convolutions with this sheaf, are equal to each other. In other words, for any $\calF\in K_{A\times \bbC^*}(T^*\calB)$,
\[
\pi_{1*}(\pi_2^*\calF\otimes[\calO_{T_{Y_\alpha}^*}(\rho,-\rho+\alpha)])=\pi_{2*}(\pi_1^*\calF\otimes[\calO_{T_{Y_\alpha}^*}(\rho,-\rho+\alpha)]).
\]
Therefore, we get the following equality as operators on $K_{A\times \bbC^*}(T^*\calB)$:
\[\calL_{\rho}T_\alpha\calL_{-\rho}=\calL_{-\rho} T'_\alpha\calL_{\rho}. \]
The operator of the left action  is denoted by $T^L_\alpha:=\calL_{\rho}T_\alpha\calL_{-\rho}$ and the right action is denoted by $T^{\oR}_\alpha:=\calL_{-\rho} T'_\alpha\calL_{\rho}$.

\begin{rem}\label{rmk:LR_Hecke}
	In the present paper, compositions of operators are read from right to left, even for right action operators. So for any $K$-theory class $\calF\in K_{A\times \bbC^*}(T^*(G/B))$,  $\calL_\rho(T_{\al}(\calL_{-\rho}\otimes \calF))= \calL_{-\rho}T'_{\al}(\calL_{\rho}\otimes \calF)$. For any reduced decomposition $w=s_1\cdots s_k$, we define $T_w^L:=T^L_{s_1}\cdots T^L_{s_k}$ and $T^{\oR}_w:=T^{\oR}_{s_k}\cdots T^{\oR}_{s_1}$.
	On the other hand, in  \cite{SZZ} a left action of the affine Hecke algebra through right convolution operators were used. In particular, under the notations of {\it loc. cit. } we have $T'_{s_1\cdots s_k}:=T'_{s_1}\cdots T'_{s_k}$. Compared to  the notations in the present paper we have, for any $x\in W$, \begin{equation}\label{eqn:T'T^R}T^{\oR}_x=\calL_{-\rho}T'_{x^{-1}}\calL_\rho.\end{equation}
\end{rem}

The following is one of the main results in \cite{SZZ}.
\begin{thm}\cite[Theorem 4.5]{SZZ} \label{lem:actiononstable}
	Let $\alpha$ be a simple root. Then
	\begin{align}
	\label{eq:Taction}T_\al(\stabm{w})&=\left\{ \begin{array}{ll}(q-1)\stabm{w}+q^{1/2}\stabm{ws_\al}, & \text{ if }ws_\al<w;\\
	q^{1/2}\stabm{ws_\al}, &\text{ if }ws_\al>w.\end{array}\right.\\
	T'_\al(\stabp{w})&=\left\{\begin{array}{ll}(q-1)\stabp{w}+q^{1/2}\stabp{ws_\al}, &\text{ if }ws_\al<w;\\
	q^{1/2}\stabp{ws_\al}, &\text{ if }ws_\al>w.\end{array}\right.
	\end{align}
	In particular,
	\[\stabm{w}=q^{\frac{\ell(w_0w)}{2}}T^{-1}_{w_0w}(\stabm{w_0}), \textit{\quad and \quad} \stabp{w}=q^{-\frac{\ell(w)}{2}}T'_{w^{-1}}(\stabp{id}).\]
\end{thm}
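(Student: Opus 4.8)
\emph{Proof proposal.} My plan is to prove the two displayed formulas together, reducing everything to a single fixed-point computation. First I observe that the two formulas are equivalent to each other: by the duality \eqref{eq:dual} the bases $\{\stabm{v}\mid v\in W\}$ and $\{\stabp{v}\mid v\in W\}$ are dual with respect to $\langle\cdot,\cdot\rangle$, and substituting $T_\al=-D_\al-1$, $T'_\al=-D'_\al-1$ into \eqref{eq:adj} gives $\langle T_\al\calF,\calG\rangle=\langle\calF,T'_\al\calG\rangle$ for all $\calF,\calG\in K_T(T^*\calB)$. Writing $T_\al(\stabm{v})=\sum_u a_{v,u}\stabm{u}$ and $T'_\al(\stabp{w})=\sum_u b_{w,u}\stabp{u}$ and pairing against the dual basis gives $a_{v,w}=b_{w,v}$, and a direct inspection shows that the two claimed formulas are exchanged by this transposition of coefficient matrices. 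So it suffices to prove only the $T'_\al$ formula.

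Next I would establish triangularity. Since $T'_\al(\calF)=-\calF-\pi_{2*}(\pi_1^*\calF\otimes\pi_2^*\calL_\al)$ is $K_T(\pt)$-linear, write $T'_\al(\stabp{w})=\sum_v c_v\,\stabp{v}$ with $c_v\in K_T(\pt)$. Using $\supp(\stabp{w})\subseteq\FAttr_{\fC_+}(w)$, the description $Y_\al=\calB\times_{\calP_\al}\calB$ (so that $\pi_2(\pi_1^{-1}(\Attr_{\fC_+}(v)))\subseteq\overline{\Attr_{\fC_+}(v)}\cup\overline{\Attr_{\fC_+}(vs_\al)}$), and the Bruhat identity $\{u\le w\}\cup\{us_\al\mid u\le w\}=\{u\le\max(w,ws_\al)\}$, I would conclude $\supp(T'_\al(\stabp{w}))\subseteq\FAttr_{\fC_+}(\max(w,ws_\al))$, hence $c_v=0$ unless $v\le\max(w,ws_\al)$.

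The heart of the argument is then to compute the $c_v$ by restricting to the $A$-fixed points. The $A$-fixed points of $T^*_{Y_\al}$ lying over $v\in(T^*\calB)^A$ under $\pi_2$ are $(v,v)$ and $(vs_\al,v)$, so the localization theorem expresses $T'_\al(\calF)|_v$ explicitly in terms of $\calF|_v$ and $\calF|_{vs_\al}$, with coefficients rational in $e^{v\al}$ and $q$; passing through the minimal parabolic projection $\calB\to\calP_\al$ and the compatibility of stable envelopes with it, this reduces essentially to the rank-one case (on $T^*\mathbb{P}^1$), where both sides are written down by hand. Plugging in the normalization \eqref{eq:normp} for $\stabp{v}|_v$ and the vanishing $\stabp{v}|_u=0$ for $u\not\le v$ (from the support condition), I would solve the resulting triangular system and obtain $c_v=0$ for $v\notin\{w,ws_\al\}$, $c_{ws_\al}=q^{1/2}$, $c_w=q-1$ if $ws_\al<w$, and $c_w=0$ if $ws_\al>w$. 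This fixed-point / rank-one computation is where the real work sits, and I expect it to be the main obstacle; the support and transposition steps are routine bookkeeping with the Bruhat order.

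Finally, for the ``in particular'' statements I would argue by induction on length. From $(T_\al+1)(T_\al-q)=0$ one gets $T_\al^{-1}=q^{-1}T_\al-(1-q^{-1})$, so the formula just proved gives $T_\al^{-1}(\stabm{v})=q^{-1/2}\stabm{vs_\al}$ whenever $vs_\al<v$, and likewise $T'_\al(\stabp{v})=q^{1/2}\stabp{vs_\al}$ whenever $vs_\al>v$. For a reduced word $w=s_{i_1}\cdots s_{i_k}$ every left factor satisfies $(s_{i_1}\cdots s_{i_{j-1}})s_{i_j}>s_{i_1}\cdots s_{i_{j-1}}$, so applying the second identity repeatedly to $\stabp{id}$ yields $T'_{s_{i_k}}\cdots T'_{s_{i_1}}(\stabp{id})=q^{k/2}\stabp{w}$, i.e.\ $\stabp{w}=q^{-\ell(w)/2}T'_{w^{-1}}(\stabp{id})$; dually, starting from $\stabm{w_0}$ and using $\ell(w_0u)=\ell(w_0)-\ell(u)$ for all $u$, repeated application of the first identity along a reduced word for $w_0w$ gives $\stabm{w}=q^{\ell(w_0w)/2}T_{w_0w}^{-1}(\stabm{w_0})$.
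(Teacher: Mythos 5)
Your overall scaffolding is sound: the duality step, the support/triangularity step, and the derivation of the \emph{in particular} statements from the recursions are all correct. In particular, the adjointness identity $\langle T_\al\calF,\calG\rangle=\langle\calF,T'_\al\calG\rangle$ combined with $\langle\stabp{v},\stabm{w}\rangle=\delta_{v,w}$ does show that the two displayed formulas are transposes of one another, and your verification that the concluding formulas follow by iterating the simple-reflection case along reduced words is fine.

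The gap is in what you call ``the heart of the argument.'' You assert that ``passing through the minimal parabolic projection $\calB\to\calP_\al$ and the compatibility of stable envelopes with it, this reduces essentially to the rank-one case,'' and then that ``plugging in the normalization'' and ``solving the triangular system'' finishes the job. No such compatibility is established — and it is not available off the shelf: the convolution kernel lives on $T^*_{Y_\al}(\calB\times\calB)$, not on $T^*(\calP_\al)$ or $T^*\bbP^1$, and the stable basis of $T^*\calB$ does not factor through a stable basis on a partial flag cotangent bundle in a way that would let you ``write both sides by hand.'' Proving such a compatibility would be at least as hard as the theorem itself. Moreover, the triangularity you establish ($c_v=0$ unless $v\le\max(w,ws_\al)$) is genuinely weaker than what you need ($c_v=0$ for $v\notin\{w,ws_\al\}$): solving the triangular system at a fixed point $u$ requires knowing $T'_\al(\stabp{w})|_u$, which in turn requires $\stabp{w}|_u$ and $\stabp{w}|_{us_\al}$ — quantities for which there is no closed form in general.

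What actually makes the computation go (see the proof of Lemma~\ref{lem:generalD} in this paper, which the authors state follows the same template as the cited [SZZ, Prop.~3.3]) is: (i) use duality to express each coefficient $c_v$ as a localization sum of the shape \eqref{eq:ay}; (ii) use the Bruhat-order combinatorics of Lemma~\ref{lem:wyz} to kill all but $c_w$ and $c_{ws_\al}$; (iii) for the two remaining coefficients, which are a priori only elements of $\Frac K_T(\pt)$, run a Newton-polygon/degree estimate (as in Steps 4–10 of that proof) exploiting the degree axiom of the stable envelope for the chosen alcove $\na_\pm$, to pin them down as the constants $q-1$ (resp.\ $0$) and $q^{1/2}$. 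The degree estimates are the substitute for the ``rank-one reduction'' you posit, and they are where the real work lies; your proposal does not engage with them.
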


\section{General facts on the wall-crossing formulas}
The definition of a stable basis depends on a choice of a chamber and an alcove.
In this section, we collect some general facts about how the stable basis changes according to the change of these two choices involved. 

When the choice of the chamber is fixed,  the formula giving the change of bases associated to two different choices of alcove is called a wall-crossing formula.

\subsection{Duality of coherent sheaves}
Since $T^*\calB$ is smooth, $K_{T}(T^*\calB)$ is generated by vector bundles. For each vector bundle $\calF$, there is a dual vector bundle $\calF^\vee$. This operation is well-defined on $K_{T}(T^*\calB)$, and gives the duality operation
\[\calF\mapsto (\calF)^\vee,\] 
which sends $q$ to $q^{-1}$. Then we have the following relations:
\begin{lem}\label{lem:sheafdual}
	Let $w\in W$. Then
	\begin{align}
	\label{eq:dualm}(-q)^{\dim \calB}\calL_{-2\rho}\otimes(\stm{\na}{w})^\vee&=\stm{-\na}{w}\in K_{T}(T^*\calB),\\
	\label{eq:dualp}
	(-1)^{\dim \calB}\calL_{2\rho}\otimes(\stp{\na}{w})^\vee & =\stp{-\na}{w}\in K_{T}(T^*\calB).
	\end{align}
\end{lem}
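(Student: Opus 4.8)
The plan is to verify the two identities by checking that both sides agree after restriction to every $A$-fixed point $v\in W$, using the non-degeneracy of the fixed-point restriction map on the localized $K$-theory, together with the characterization of stable bases by the support, normalization, and degree axioms. In fact, rather than comparing restrictions blindly, I would argue that the left-hand side of each identity satisfies the three defining conditions of the stable basis $\stab^{\fC,T^{1/2},-\na}_w$ for the appropriate chamber and polarization, hence must equal it by uniqueness. The key observation is how the duality operation $\calF\mapsto\calF^\vee$ interacts with the structure sheaves of attracting cells: for a fixed point $w$ and chamber $\fC$, one has $\calO_{\Attr_\fC(w)}^\vee$ related to $\calO_{\Attr_{-\fC}(w)}$ up to a twist by the determinant of the (co)normal directions, because dualizing reverses the sign of all $A$-weights, which swaps the attracting and repelling directions — equivalently, it exchanges $\fC$ with $-\fC$. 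Simultaneously, dualizing swaps the polarization $T^{1/2}=T\calB$ with... well, here one must be careful: $\calB$'s cotangent-type twists enter, which is exactly why the factors $(-q)^{\dim\calB}\calL_{-2\rho}$ and $(-1)^{\dim\calB}\calL_{2\rho}$ appear.

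First I would check the support condition. Since $\calF\mapsto\calF^\vee$ and tensoring by a line bundle are both invertible and preserve supports, $\supp\big(\calL_{-2\rho}\otimes(\stm{\na}{w})^\vee\big)=\supp(\stm{\na}{w})\subseteq\FAttr_{\fC_-}(w)$. But the support condition for $\stm{-\na}{w}$ also requires $\FAttr_{\fC_-}(w)$ — note that changing the alcove from $\na$ to $-\na$ does \emph{not} change the chamber, hence not the attracting sets — so this matches. Second, I would check the normalization at $w$: restrict the left-hand side to $w$, use the formula \eqref{eq:normm} for $\stm{\na}{w}|_w$, apply the duality $e^\mu\mapsto e^{-\mu}$, $q\mapsto q^{-1}$, and multiply by $(-q)^{\dim\calB}e^{-2\rho}$; a direct manipulation of the product $q^{\ell(w)/2}\prod_{\be>0,w\be<0}(1-e^{-w\be})\prod_{\be>0,w\be>0}(1-qe^{-w\be})$ should reproduce $\stm{-\na}{w}|_w$, which by \eqref{eq:normm} again equals the same expression (normalization is independent of the alcove, per the remark after the definition). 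Third, the degree condition: the Newton polygon $\deg_A$ of a class is sent to its negative under duality, and the hyperplane/alcove datum in the degree axiom for $-\na$ involves $-\calL$ for $\calL\in\na$, so the inequality $\deg_A(\stab_w|_v)\subseteq\deg_A(\stab_v|_v)+\calL|_v-\calL|_w$ gets transformed consistently; the constant line-bundle twist $\calL_{-2\rho}$ shifts both sides of the containment by the same amount and hence is harmless.

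For \eqref{eq:dualp} the argument is identical with $T^{1/2}=T\calB$ replaced throughout, $\fC_+$ in place of $\fC_-$, the normalization formula \eqref{eq:normp} in place of \eqref{eq:normm}, and the twist $(-1)^{\dim\calB}\calL_{2\rho}$; the absence of a power of $q$ (compared with $(-q)^{\dim\calB}$ in \eqref{eq:dualm}) reflects that the polarizations $T\calB$ and $T^*\calB$ carry opposite $\bbC^*$-weights, so the $q$-powers coming from $\det N_w^{1/2}$ cancel differently in the two cases. Alternatively — and this may be the cleanest route — one can \emph{deduce} \eqref{eq:dualp} from \eqref{eq:dualm} (or vice versa) using the duality pairing \eqref{eq:dual}: dualizing the identity $\langle\stm{\na}{v},\stab^{\fC_+,T\calB,-\na}_w\rangle=\de_{v,w}$ and comparing with $\langle\stp{\na}{v},\stab^{\fC_-,T^*\calB,-\na}_w\rangle=\de_{v,w}$ turns one duality statement into the other once one tracks how $\langle\cdot,\cdot\rangle$ behaves under $\calF\mapsto\calF^\vee$ (it sends the pairing to its $q\mapsto q^{-1}$ twist up to the factor $\bigwedge^\bullet T(T^*\calB)$ appearing in \eqref{eqn:pairing}).

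The main obstacle I anticipate is pinning down the exact scalar and line-bundle twist — i.e., justifying precisely the factors $(-q)^{\dim\calB}\calL_{-2\rho}$ and $(-1)^{\dim\calB}\calL_{2\rho}$ rather than some other normalization. This is a bookkeeping problem about $\det N_{w,\pm}$, $\det N_w^{1/2}$, and the canonical bundle $\omega_{\calB}=\calL_{-2\rho}$: dualizing $\calO_{\Attr_\fC(w)}$ produces, via relative Serre duality along the attracting cell, a twist by the determinant of the conormal bundle of the cell, and one has to check that summing these local contributions over the cell stratification (or, more efficiently, reading it off at the fixed point via the normalization axiom) yields exactly the global twist $\calL_{\pm 2\rho}$ and the claimed sign/$q$ power. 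I would handle this by working entirely at the fixed points using \eqref{eq:normp}–\eqref{eq:normm}, where everything is an explicit product over roots and the computation reduces to the elementary identities $\prod_{\be>0}(-e^{-w\be})=e^{-2\rho}$ (up to sign bookkeeping with $\ell(w)$) and $q^{\#\{\be>0\}}=q^{\dim\calB}$.
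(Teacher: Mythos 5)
Your proposal is correct and follows essentially the same strategy as the paper: prove one identity by checking that the left-hand side satisfies the support, normalization, and degree axioms characterizing the stable basis, and obtain the other identity via the duality pairing \eqref{eq:dual} (the paper actually opens by deducing \eqref{eq:dualp} from \eqref{eq:dualm} with the pairing, then verifies \eqref{eq:dualm} directly). The fixed-point bookkeeping you flag as the main obstacle is indeed exactly what the paper carries out, using \eqref{eq:normm} and the identity $e^{-2w\rho}\prod_{\be>0}(-e^{w\be})\cdot q^{-\ell(w)}\cdot q^{\dim\calB}$ to produce the stated twist.
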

\begin{rem}
See \cite[Lemma 11.1(b)]{AMSS19} or \cite[Equation (15)]{OS} for related statements. In our lemma, the polarizations on both sides of the equalities are unchanged, while those in \textit{loc. cit.} are changed to the opposite polarizations. 
\end{rem}
\begin{proof}
	We first show that \eqref{eq:dualm} implies \eqref{eq:dualp}. Indeed, by duality \eqref{eq:dual} and  \eqref{eq:dualm}, we have
	\begin{align*}
	\delta_{w,z}&=\langle (-q)^{\dim \calB}\calL_{-2\rho}\otimes(\stm{\na}{w})^\vee, \stp{\na}{z}\rangle\\
	&=(-q)^{\dim \calB}\sum_{y\in W}\frac{e^{-2y\rho}\ast(\stm{\na}{w}|_y)\stp{\na}{z}|_y}{\prod_{\alpha>0}(1-e^{y\alpha})(1-qe^{-y\alpha})},
	\end{align*}
	where $\ast$ in the second line acts on $K_T(pt)$ by sending $e^\lambda$ to $e^{-\lambda}$ and $q$ to $q^{-1}$. Applying $\ast$ to the above identity, we get
	\begin{align*}
	\delta_{w,z}&=(-q)^{-\dim \calB}\sum_{y\in W}\frac{e^{2y\rho}\stm{\na}{w}|_y\ast(\stp{\na}{z}|_y)}{\prod_{\alpha>0}(1-e^{-y\alpha})(1-q^{-1}e^{y\alpha})}\\
	&=(-1)^{\dim \calB}\sum_{y\in W}\frac{e^{2y\rho}\stm{\na}{w}|_y\ast(\stp{\na}{z}|_y)}{\prod_{\alpha>0}(1-e^{y\alpha})(1-qe^{-y\alpha})}\\
	&=\langle \stm{\na}{w}, (-1)^{\dim \calB}\calL_{2\rho}\otimes(\stp{\na}{z})^\vee\rangle.
	\end{align*}
Again by duality \eqref{eq:dual},	we then get \eqref{eq:dualp}.
	
	Now we prove \eqref{eq:dualm}. It suffices to show that $(-q)^{\dim \calB}\calL_{-2\rho}\otimes(\stm{\na}{w})^\vee$ satisfies the defining properties of $\stm{-\na}{w}$. The support condition is obvious. The normalization condition is checked as follows
	\begin{align*}
	\left((-q)^{\dim \calB}\calL_{-2\rho}\otimes(\stm{\na}{w})^\vee\right)|_w
	=&(-q)^{\dim \calB}e^{-2w\rho}\ast(\stm{\na}{w}|_w)\\
	\overset{\eqref{eq:normm}}=&(-q)^{\dim \calB}q^{-\frac{\ell(w)}{2}}e^{-2w\rho}\prod_{\beta>0, w\beta<0}(1-e^{w\beta})\prod_{\beta>0, w\beta>0}(1-q^{-1}e^{w\beta})\\
	=&q^{\frac{\ell(w)}{2}}\prod_{\beta>0, w\beta<0}(1-e^{-w\beta})\prod_{\beta>0, w\beta>0}(1-qe^{-w\beta})\\
	\overset{\eqref{eq:normm}}=&\stm{-\na}{w}|_w.
	\end{align*}
	Therefore, the normalization condition is verified.
	
	Finally, we check the degree condition. Pick $\calL\in -\na$. We need to check that for any $y> w$, the following is true
	\[\deg_A (-q)^{\dim \calB}\left(\calL_{-2\rho}\otimes(\stm{\na}{w})^\vee\right)|_y\subset \deg_A (-q)^{\dim \calB}\left(\calL_{-2\rho}\otimes(\stm{\na}{y})^\vee\right)|_y+\calL|_y-\calL|_w .\]
	Indeed, this in turn is equivalent to 
	\[\deg_A \stm{\na}{w}|_y\subset \deg_A \stm{\na}{y}|_y+\calL|_w-\calL|_y=\deg_A\stm{\na}{y}|_y+\calL^{-1}|_y-\calL^{-1}|_{w} .\]
	Since $\calL^{-1}\in \na$, the above identity is precisely the degree condition for the stable basis $\stm{\na}{w}$. Therefore, the degree condition also follows.  
\end{proof}

\subsection{Weyl group action}\label{subsec:WeylAction}
Recall on $K_T(T^*\calB)$, we have a left Weyl group action induced from the $G$-action on $T^*\calB$. Using equivariant localization, $w(\calF)$ for $\calF\in K_T(T^*\calB)$ and $w\in W$ is determined by its restrictions to the fixed points, which is given by
\begin{equation}
\label{eq:Wact}w(\calF)|_v=w(\calF|_{w^{-1}v}), \quad \calF\in K_{T}(T^*\calB).
\end{equation}
Moreover, the action by $w$ also acts on the base field $K_T(pt)$ by the usual Weyl group action. From the above formula, it is easy to get  
\[
w(\iota_{y})|_{v}=\left\{\begin{array}{cc} w(\iota_{y}|_y), & \text{if }v=wy,\\
0, & \text{otherwise}.\end{array} \right.
\]
That is, $w(\iota_{y})=\iota_{wy}$. This action agrees with the $\odot$-action defined in \cite[Definition 3.2]{LZZ}. The effect of this group action on a stable basis is the following, which can be proved in exactly the same way as the proof of Equation \eqref{eq:dualm} in Lemma \ref{lem:sheafdual} above. 
\begin{lem}\cite[Lemma 11.1(a)]{AMSS19}\label{lem:Weyl action}
	For any $w, y\in W$, we have
	\[w(\stab^{\mathfrak{C},T^{1/2},\na}_y)=\stab^{w\mathfrak{C},w(T^{1/2}),\na}_{wy}.\]
\end{lem}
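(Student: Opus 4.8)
The statement to prove is Lemma~\ref{lem:Weyl action}: for $w,y\in W$, we have $w(\stab^{\mathfrak{C},T^{1/2},\na}_y)=\stab^{w\mathfrak{C},w(T^{1/2}),\na}_{wy}$. Since the stable envelope is characterized uniquely by the three axioms (Support, Normalization, Degree), the strategy is simply to check that the class $w(\stab^{\mathfrak{C},T^{1/2},\na}_y)$ satisfies the three defining properties for the data $(w\mathfrak{C}, w(T^{1/2}), \na)$ with index $wy$. Throughout I would use the localization description \eqref{eq:Wact}, namely $w(\calF)|_v = w(\calF|_{w^{-1}v})$, which reduces everything to a computation at fixed points, together with the geometric fact that the $G$-action (hence the $W$-action via Weyl group representatives) carries the attracting set $\Attr_{\mathfrak C}(y)$ to $\Attr_{w\mathfrak C}(wy)$ and the flow for $\sigma\in\mathfrak C$ to the flow for $w\sigma\in w\mathfrak C$.

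\textbf{Key steps.} First, the \emph{support} condition: because $g\cdot$ (for $g$ a lift of $w$) is an automorphism of $T^*\calB$ sending $\Attr_{\mathfrak C}(v)$ to $\Attr_{w\mathfrak C}(wv)$ and respecting the orders $\preceq_{\mathfrak C}$ vs.\ $\preceq_{w\mathfrak C}$ (since $u\preceq_{\mathfrak C}v \iff wu\preceq_{w\mathfrak C}wv$), it sends $\FAttr_{\mathfrak C}(y)$ to $\FAttr_{w\mathfrak C}(wy)$; hence $\supp(w(\stab^{\mathfrak C,T^{1/2},\na}_y))\subseteq\FAttr_{w\mathfrak C}(wy)$. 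Second, the \emph{normalization} condition at the fixed point $wy$: by \eqref{eq:Wact}, $w(\stab^{\mathfrak C,T^{1/2},\na}_y)|_{wy}=w\big(\stab^{\mathfrak C,T^{1/2},\na}_y|_y\big)$, and the right-hand side equals $w$ applied to $(-1)^{\rank N^{1/2}_{y,+}}\big(\tfrac{\det N_{y,-}}{\det N_y^{1/2}}\big)^{1/2}\calO_{\Attr_{\mathfrak C}(y)}|_y$; here one observes that $w$ sends $N_{y,\pm}$ (weight spaces w.r.t.\ $\mathfrak C$) to $N_{wy,\pm}$ (weight spaces w.r.t.\ $w\mathfrak C$), sends $N_y^{1/2}=N_y\cap T^{1/2}|_y$ to $N_{wy}\cap w(T^{1/2})|_{wy}$, hence matches the prescribed normalization for $\stab^{w\mathfrak C, w(T^{1/2}),\na}_{wy}$ at $wy$ — the ranks $\rank N^{1/2}_{y,+}=\rank N^{1/2}_{wy,+}$ agree and the square-root determinant line is carried over by $w$. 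Third, the \emph{degree} condition: for $v\prec_{w\mathfrak C} wy$, write $v=wu$ with $u\prec_{\mathfrak C}y$; then $w(\stab^{\mathfrak C,T^{1/2},\na}_y)|_v = w(\stab^{\mathfrak C,T^{1/2},\na}_y|_u)$, and applying $w$ to the Newton-polygon containment $\deg_A(\stab^{\mathfrak C,T^{1/2},\na}_y|_u)\subseteq \deg_A(\stab^{\mathfrak C,T^{1/2},\na}_u|_u)+\calL|_u-\calL|_y$ — noting that $w$ acts $\bbR$-linearly on $\Lambda\otimes\bbQ$ so it carries Newton polygons to Newton polygons, and $w(\calL|_u)$ is the restriction of $\calL$ (more precisely of $w\calL$, but since $\na$ is $W$-stable as a hyperplane arrangement, one can choose $\calL$ appropriately, or observe the degree condition only depends on $\na$, cf.\ the Remark after the definition) to the fixed point $wu$ — gives exactly the required containment for $\stab^{w\mathfrak C,w(T^{1/2}),\na}_{wy}$. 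By uniqueness of the stable envelope, the two classes coincide.

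\textbf{Main obstacle.} The bookkeeping step I expect to be the most delicate is making the degree axiom transform correctly, since $w$ acts on $\Pic(T^*\calB)_\bbR=\fh^*_\bbR$ and on the weight lattice, so a priori $w$ sends the alcove $\na$ to $w\na$ rather than fixing it; the point to get right is that the Degree condition depends only on $\na$ through the \emph{differences} $\calL|_v-\calL|_u$ of fixed-point restrictions, and that under $w$ these differences for $\calL\in\na$ transform into the corresponding differences for $w\calL\in w\na$ at the translated fixed points, which is precisely the data entering the degree condition for $\stab^{w\mathfrak C,w(T^{1/2}),\na}_{wy}$ once one uses that $\calL|_y$ is, up to the $W$-equivariant structure, independent of which representative in $\na$ one picks. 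I would spell this out carefully using \eqref{eq:Wact} and the fact that $\calL_\la|_v = e^{v\la}$ (up to the $q$-grading), so that $w(\calL_\la|_v)=e^{w v\la}=\calL_{w\la}|_{wv}$, which is the clean identity that makes the three checks go through. The other steps (support, normalization) are essentially formal consequences of $w$ being an automorphism intertwining the torus actions through the cocharacter twist $\sigma\mapsto w\sigma$.
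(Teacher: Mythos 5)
Your overall strategy is exactly the paper's: verify that $w(\stab^{\mathfrak{C},T^{1/2},\na}_y)$ satisfies the three defining axioms for the data $(w\mathfrak{C},w(T^{1/2}),\na)$ with index $wy$, using the localization description $w(\calF)|_v=w(\calF|_{w^{-1}v})$ and the identity $\Attr_{w\fC}(wy)=w(\Attr_{\fC}(y))$. Your support and normalization steps match the paper's nearly verbatim. (For the degree step the paper simply cites \cite[Lemma~10.1(a)]{AMSS}, whereas you work it out in place — in principle a fine choice.)

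However, your degree argument contains a concrete error in the key bookkeeping identity. You wrote $w(\calL_\lambda|_v)=e^{wv\lambda}=\calL_{w\lambda}|_{wv}$ and then tried to accommodate this by ``choosing $\calL$ appropriately'' or invoking $W$-stability of the hyperplane arrangement. The second equality is wrong: $\calL_{w\lambda}|_{wv}=e^{(wv)(w\lambda)}$, which is not $e^{(wv)\lambda}$ in general. The correct, and cleaner, identity is
\[
w(\calL_\lambda|_v)\;=\;w\bigl(e^{v\lambda}\bigr)\;=\;e^{(wv)\lambda}\;=\;\calL_\lambda|_{wv},
\]
i.e.\ the \emph{same} line bundle $\calL_\lambda$, restricted at the translated fixed point. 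With this identity there is nothing to adjust: applying $w$ to
\[
\deg_A\bigl(\stab^{\fC,T^{1/2},\na}_y|_u\bigr)\subset\deg_A\bigl(\stab^{\fC,T^{1/2},\na}_u|_u\bigr)+\calL|_u-\calL|_y
\]
directly yields
\[
\deg_A\bigl((w\stab^{\fC,T^{1/2},\na}_y)|_{wu}\bigr)\subset\deg_A\bigl((w\stab^{\fC,T^{1/2},\na}_u)|_{wu}\bigr)+\calL|_{wu}-\calL|_{wy},
\]
which, by the already-verified normalization $(w\stab^{\fC,T^{1/2},\na}_u)|_{wu}=\stab^{w\fC,w(T^{1/2}),\na}_{wu}|_{wu}$, is exactly the degree condition for $\stab^{w\fC,w(T^{1/2}),\na}_{wy}$ with the \emph{same} $\calL\in\na$. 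So the alcove genuinely does not move, and the caveat about ``$w\na$ vs.\ $\na$'' that you raised (and tried to resolve by hand-waving about $W$-stability of the arrangement — which anyway only says $W$ permutes alcoves, not that it fixes $\na$) was a false alarm introduced by the miscomputed identity. Once you replace $\calL_{w\lambda}|_{wv}$ with $\calL_\lambda|_{wv}$, the argument closes with no loose ends.
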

\begin{rem}
\begin{enumerate} 
\item[(a)]
If the polarization $T^{1/2}=T\calB$ or $T^{1/2}=T^*\calB$, then $w(T^{1/2})=T^{1/2}$ as it lies in the $G$-equivariant K-theory, which are fixed by the left Weyl group action. 
\item[(b)]
A prior, the alcove on the right hand side should be $w(\na)$. I.e., for any $\lambda\in \na$, the fractional line bundle $\calL_\lambda$ should be changed to $w(\calL_\lambda)$. But since $\calL_\lambda$ is $G$-equivariant, $w(\calL_\lambda)=\calL_\lambda$. Thus, the alcove stays the same.
\end{enumerate}
\end{rem}

\subsection{Wall-crossing matrix}
Our computation of wall crossings for the K-theoretic stable bases is based on the following:
\begin{lem}\cite[Theorem 1]{OS}\label{lem:coefficients} 
	Suppose the alcoves $\na_1$ and $\na_2$ are adjacent and separated by $H_{\al^\vee,n}$. For any $y\in W$, we have
	\[
	\stab^{\fC, T^{1/2}, \na_1}_y=\left\{\begin{array}{ll}\stab^{\fC, T^{1/2}, \na_2}_y+f^{\na_1\leftarrow \na_2}_y\stab_{ys_\al}^{\fC, T^{1/2}, \na_2}, &\text{ if }ys_\al\prec_\fC y;\\
	\stab^{\fC, T^{1/2}, \na_2}_y , & \text{ if } ys_\al \succ_\fC y, \end{array}\right.
	\]
	where  $f_y^{\na_1\lefta\na_2}\in K_{T}(\pt)$. If $n=0$, $f_y^{\na_1\lefta \na_2}\in K_{\mathbb{C}^*}(\pt)$, i.e., it does not depend on the equivariant parameters of $A$. In particular, 
	\[f_y^{\na_1\lefta \na_2}=-f_y^{\na_2\lefta \na_1}.\]
\end{lem}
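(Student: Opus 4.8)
The plan is to read off the shape of the transition matrix from the three defining axioms of the stable envelope, in the spirit of \cite{MO, OS}. Fix the chamber $\fC$ and the polarization $T^{1/2}$ throughout, and abbreviate $\stab^i_y:=\stab^{\fC,T^{1/2},\na_i}_y$ for $i=1,2$. Since $\{\stab^1_y\mid y\in W\}$ and $\{\stab^2_y\mid y\in W\}$ are both $K_T(\pt)$-bases of $K_T(T^*\calB)$ (a standard fact, \cite{MO, OS}), I would write $\stab^1_y=\sum_{w\in W}c_{yw}\stab^2_w$ with uniquely determined $c_{yw}\in K_T(\pt)$. The support axiom ($\supp(\stab^1_y)\subseteq\FAttr_\fC(y)$), combined with the usual ``topmost non-vanishing coefficient'' argument applied to restrictions to the $A$-fixed points, forces $c_{yw}=0$ unless $w\preceq_\fC y$; restricting to the fixed point $y$ and using that the normalization axiom does not depend on the alcove — so that $\stab^1_y|_y=\stab^2_y|_y$, a non-zero-divisor in $K_T(\pt)$ — forces $c_{yy}=1$. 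Thus the transition matrix is already triangular for $\preceq_\fC$ with $1$'s on the diagonal, and $c_{y,ys_\al}=0$ automatically when $ys_\al\succ_\fC y$.

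The core of the argument is to exploit the degree axiom for \emph{both} alcoves at once. Because $\na_1$ and $\na_2$ share a codimension-one facet $F$ lying on $H_{\al^\vee,n}$, they are interchanged by the affine reflection $s_{\al^\vee,n}\colon\la\mapsto s_\al\la+n\al$, and $F$ is a nonempty open subset of $H_{\al^\vee,n}$ contained in $\bar\na_1\cap\bar\na_2$. Fixing an arbitrary $\la_0\in F$ and using that the degree axiom holds for \emph{every} slope $\calL\in\na_i$ (as stated in the definition), I would pass to the limit $\na_i\ni\calL_\la\to\calL_{\la_0}$ to obtain, for every $w\prec_\fC y$ and $i=1,2$,
\[
\deg_A(\stab^i_y|_w)\ \subseteq\ P_w+\big(w\la_0-y\la_0\big),
\]
where $\deg_A(\stab^1_w|_w)=\deg_A(\stab^2_w|_w)=:P_w$ by the alcove-independence of the normalization, and $w\la_0-y\la_0$ denotes the $A$-weight $\deg_A(\calL_{\la_0}|_w)-\deg_A(\calL_{\la_0}|_y)$.

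Next I would run a downward induction on $\ell(w)$ showing that $c_{yw}=0$ for all $w\prec_\fC y$ with $w\notin\{y,ys_\al\}$, and that $\deg_A(c_{y,ys_\al})\subseteq\{-n\,y\al\}$. Having treated all higher indices, restriction of the expansion of $\stab^1_y$ to the fixed point $w$ reduces to $c_{yw}\,\stab^2_w|_w=\stab^1_y|_w-\stab^2_y|_w-\varepsilon_w$, where $\varepsilon_w=c_{y,ys_\al}\,\stab^2_{ys_\al}|_w$ if ($ys_\al\prec_\fC y$ and $ys_\al\succ_\fC w$) and $\varepsilon_w=0$ otherwise (in particular $\varepsilon_w=0$ in the base case $w=ys_\al$, since $\ell(ys_\al)=\ell(y)-1$ leaves no admissible intermediate index). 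Taking $A$-Newton polygons — using that the Newton polygon of a product is the Minkowski sum of the factors' (as $\bbZ[q^{\pm1/2}]=K_{\bbC^*}(\pt)$ is a domain) and the cancellation principle $A+P\subseteq P+Q\iff A\subseteq Q$ for polytopes — the displayed bound controls the first two terms, while the inductive bound on $\deg_A(c_{y,ys_\al})$ controls $\varepsilon_w$ once one checks the identity $\{-n\,y\al\}+\big(w\la_0-(ys_\al)\la_0\big)=\{w\la_0-y\la_0\}$ (which uses $(\la_0,\al^\vee)=n$). One therefore gets $\deg_A(c_{yw})\subseteq\{w\la_0-y\la_0\}$. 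Since this holds for \emph{every} $\la_0\in F$, while $\deg_A(c_{yw})$ is a fixed polytope, and the affine function $\la_0\mapsto w\la_0-y\la_0$ on $H_{\al^\vee,n}$ is constant precisely when $y^{-1}w$ fixes the linear hyperplane $H_{\al^\vee,0}$ pointwise (the only such elements of $W$ being $1$ and $s_\al$), i.e. $w\in\{y,ys_\al\}$, I conclude: $c_{yw}=0$ for $w\notin\{y,ys_\al\}$; and for $w=ys_\al$ the value $w\la_0-y\la_0=-n\,y\al$ is $\la_0$-independent, giving $\deg_A(c_{y,ys_\al})\subseteq\{-n\,y\al\}$. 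This is the claimed form of the transition matrix, with $f^{\na_1\lefta\na_2}_y:=c_{y,ys_\al}\in K_T(\pt)$, nonzero only when $ys_\al\prec_\fC y$. When $n=0$ the bound collapses to $\deg_A(f^{\na_1\lefta\na_2}_y)\subseteq\{0\}$, i.e. $f^{\na_1\lefta\na_2}_y\in K_{\bbC^*}(\pt)$; and applying the statement with $\na_1,\na_2$ interchanged, noting that $(ys_\al)s_\al=y\succ_\fC ys_\al$ forces $\stab^1_{ys_\al}=\stab^2_{ys_\al}$ by the ``$\succ$'' case, and substituting the expansion, yields $f^{\na_1\lefta\na_2}_y=-f^{\na_2\lefta\na_1}_y$.

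The main obstacle, I expect, is exactly the middle step: combining the degree conditions for two \emph{different} alcoves into a single estimate. This is possible only because their closures meet along the wall $H_{\al^\vee,n}$, which lets one degenerate the slope onto that wall and thereby kill the alcove-dependence, isolating the root direction $\al$; and it must be organized together with the downward induction so that the lone surviving lower term $\varepsilon_w$ has an already-known Newton polygon before $c_{yw}$ is analyzed. Subsidiary points, belonging to the general theory rather than to this lemma, are the validity of the degree axiom for a whole family of slopes in the open alcove (needed to take the limit onto $F$) and the integrality $c_{yw}\in K_T(\pt)$.
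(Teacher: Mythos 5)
Your argument is correct, and it takes a genuinely different route from the paper. The paper's proof is a two-line reduction to the general wall-crossing theorem \cite[Theorem~1]{OS}: it recalls the map $\mu:(T^*\calB)^A\to H_2(T^*\calB,\bbZ)\otimes\Lambda$, computes $\mu(y)-\mu(ys_\al)=\al^\vee\otimes(-y\al)$, and observes that $\mu(y)-\mu(z)$ is not a multiple of $\al^\vee$ for any other $z$ — which is exactly the hypothesis under which \cite[Theorem~1]{OS} asserts the transition matrix has the stated shape. You instead reprove the relevant special case of that theorem for $T^*\calB$ directly from the three axioms. Your key mechanism — that the degree condition must hold for slopes in \emph{both} alcoves, so one can degenerate the slope onto a point $\la_0$ of the shared facet on $H_{\al^\vee,n}$, whereupon the surviving indices $w$ are precisely those for which $\la_0\mapsto w\la_0-y\la_0$ is constant on $H_{\al^\vee,n}$, i.e.\ $y^{-1}w$ in the pointwise stabilizer $\{1,s_\al\}$ — is exactly what the condition on $\mu$ encodes in \cite{OS}. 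What the paper buys is brevity at the cost of a black box; what you buy is transparency and a self-contained argument that also makes the formula $\deg_A(f_y)\subseteq\{-ny\al\}$ manifest.

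Two soft spots worth tightening. First, you open by asserting that both stable collections are $K_T(\pt)$-bases of $K_T(T^*\calB)$; this is not an innocuous fact (it is close to what is being proven), and the cleaner way to get integrality $c_{yw}\in K_T(\pt)$ rather than merely $c_{yw}\in\Frac K_T(\pt)$ is via the duality pairing and properness of the intersection of the two attracting supports, $c_{yw}=\langle\stab^1_y,\,\stab^{-\fC,T^{1/2}_{\opp},-\na_2}_w\rangle$, which is precisely how the paper argues integrality of coefficients in the proof of Lemma~\ref{lem:generalD}. Second, the ``downward induction on $\ell(w)$'' should really be downward induction in the order $\preceq_\fC$ (equivalently a $\fC$-adapted length), since for a general chamber $w\prec_\fC v$ is not monitored by $\ell$; and the parenthetical that $\{1,s_\al\}$ is the full pointwise stabilizer of $H_{\al^\vee,0}$ uses that the root system is reduced, which holds here since $G$ is semisimple but deserves a word.
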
 
\begin{proof} Let us first recall the moment map $\mu:(T^*\calB)^A\to H_2(T^*\calB, \bbZ)\otimes \Lambda$ defined in \cite[\S 2.1.7]{OS}. It is characterized by the following condition. If $p$ and $q$ are two fixed points connected by a torus invariant curve $C$, then 
	\[\mu(p)-\mu(q)=[C]\otimes \textit{weight}(T_pC).\]
This is also closely related to the GKM-condition in equivariant cohomology, see \cite[\S 4.8.5]{MO}.
	
	Identifying $H_2(T^*\calB, \bbZ)$ with the coroot lattice, we get
	\[\mu(y)-\mu(ys_\alpha)=\alpha^\vee\otimes(-y\alpha).\]
	For any other $z$ different from $y$ and $ys_\alpha$, $\mu(y)-\mu(z)$ will not be a multiple of $\alpha^\vee$.
	\footnote{The assumption that $\mu$ is positive in \textit{loc. cit.} is not necessary, since $n$ can be chosen as either positive or negative. See Equation (14) in \textit{loc. cit.}} The conclusion then follows from \textit{loc. cit.}, Theorem 1.
\end{proof}
We are interested in computing these wall crossing coefficients $f_y^{\na_1\lefta\na_2}$, which form the entries of the so-called \textit{wall R-matrix} (\cite[\S 2.2.3]{OS}). Similarly as stable bases, these matrices depend on the choice of the alcove, the chamber, and the polarization. For simplicity of notations, we omit the last two choices whenever they are clear from the context. 

From the definition, those wall crossing coefficients depend on the wall of the alcove lying on the hyperplane $H_{\al^\vee,n}$. However, it follows from Theorem \ref{thm:nonsimplewall} below, that the coefficients do not depend on the wall where we cross the hyperplane $H_{\al^\vee,n}$.

\begin{exmp}
We consider the case $\SL(2,\bbC)$ \footnote{See Appendix \ref{appendix} for the example of $\SL(3,\bbC)$, where wall-crossing coefficients for a non-simple root are computed.}. Then $\calB\cong\bbP^1$, and $\Pic(T^*\bbP^1)\otimes_\bbZ \bbQ=\bbQ\alpha$. The two $A$-fixed points on $T^*\bbP^1$ are denoted by $0$ and $\infty$, corresponding to $e $ and $s_\al$ in the Weyl group, respectively. The alcoves are $(\frac{n}{2}\alpha,\frac{n+1}{2}\alpha), n\in \bbZ$. The wall $H_{\alpha^\vee, 0}$ is  the origin $0$. We have $\na_+= (0,\frac{1}{2}\alpha)$ and $\na_-=(\frac{1}{2}\alpha, 0)=\na_+-\frac{\al}{2}$.  Using the translation formula of Lemma \ref{lem:shift}, we compute  $f_{s_\al}^{\na_-\lefta\na_+}$ for the dominant chamber and the polarization $T\bbP^1$, in which case $0\prec_{\fC_+}\infty$.  The stable basis is given by the following formulas (\cite[Example 2.4]{SZZ})
\begin{align*}
\stp{\na_-}{e}&=[\calO_{T_0^*\bbP^1}],\\
\stp{\na_-}{s_\al}&=-q^{-{1/2}}e^{-\alpha}[\calO_{\bbP^1}]+\left(-q^{{1/2}}e^{-2\alpha}+(q^{-{1/2}}-q^{{1/2}})e^{-\alpha}\right)[\calO_{T_0^*\bbP^1}].
\end{align*}
So their localizations are given by
\begin{align*}
\stp{\na_-}{e}|_e&=1-e^{\alpha}, &&\\
\stp{\na_-}{s_\al}|_e&=q^{{1/2}}-q^{-{1/2}}, & \stp{\na_-}{s_\al}|_{s_\al}&=q^{{1/2}}-q^{-{1/2}}e^{-\alpha}.
\end{align*}
By \eqref{eq:shift}, for any $y,w\in W$, 
\begin{align*}
\stp{\na_+}{y}|_w&=e^{-{1/2}y\alpha+{1/2}w\alpha}\stp{\na_-}{y}|_w.
\end{align*}
Therefore,
\begin{align*}
\stp{\na_+}{e}|_e&=1-e^{\alpha},&&\\
\stp{\na_+}{s_\al}|_e& =(q^{{1/2}}-q^{-{1/2}})e^\alpha, & \stp{\na_+}{s_\al}|_{s_\al}&=q^{{1/2}}-q^{-{1/2}}e^{-\alpha}.
\end{align*}
By Lemma \ref{lem:coefficients},
\[\stp{\na_-}{s_\al}=\stp{\na_+}{s_\al}+f_{s_\alpha}^{\na_-\leftarrow \na_+}\stp{\na_+}{e}.\]
Restricting both sides to the fixed point $e$, we get
\[q^{{1/2}}-q^{-{1/2}}=(q^{{1/2}}-q^{-{1/2}})e^\alpha+f_{s_\alpha}^{\na_-\leftarrow \na_+}(1-e^{\alpha}).\]
Hence
\[f_{s_\alpha}^{\na_-\leftarrow \na_+}=q^{{1/2}}-q^{-{1/2}}.\]
\end{exmp}

\subsection{Translations} We consider the effect of  translation by an integral weight $\mu\in \Lambda$ on a stable basis. By the uniqueness of stable basis, we have (see \cite[Lemma 8.2.(c)]{AMSS19}):
\begin{equation}\label{eq:shift}
\stab^{\fC,T^{1/2},\na+\mu}_y=e^{-y\mu}\calL_\mu\otimes \stab^{\mathfrak{C},T^{1/2},\na}_y.
\end{equation}
An immediate corollary of this fact is
\begin{lem}\label{lem:shift}Let $\na_1,\na_2$ are adjacent alcoves separated by $H_{\al^\vee, n}$. 
	For any integral weight $\mu\in \Lambda$, we denote $\na+\mu$ be the alcove obtained by translating $\na$ by $\mu$, then
	\[f_y^{\na_1+\mu\leftarrow \na_2+\mu}=e^{-(\mu,\alpha^\vee)y\alpha}f_y^{\na_1\leftarrow \na_2}.\]
\end{lem}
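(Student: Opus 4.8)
The strategy is to reduce everything to the translation formula \eqref{eq:shift} and the definition of the wall-crossing coefficients in Lemma \ref{lem:coefficients}. First I would record that, by \eqref{eq:shift}, for any alcove $\na$ and any $y\in W$ we have $\stab^{\fC,T^{1/2},\na+\mu}_y = e^{-y\mu}\calL_\mu\otimes \stab^{\fC,T^{1/2},\na}_y$, and note that the operator $\calF\mapsto e^{-y\mu}\calL_\mu\otimes\calF$ interacts with restriction to a fixed point $v$ by $\bigl(e^{-y\mu}\calL_\mu\otimes\calF\bigr)|_v = e^{v\mu-y\mu}\,\calF|_v$. The point is that tensoring by $\calL_\mu$ is an invertible $K_T(\pt)$-linear operation, so it takes the basis $\{\stab^{\fC,T^{1/2},\na}_y\}$ to the basis $\{\stab^{\fC,T^{1/2},\na+\mu}_y\}$ after the scalar twist $e^{-y\mu}$.

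Next I would write down the wall-crossing relation from Lemma \ref{lem:coefficients} for the pair $\na_1,\na_2$: when $ys_\al\prec_\fC y$,
\[
\stab^{\fC,T^{1/2},\na_1}_y = \stab^{\fC,T^{1/2},\na_2}_y + f_y^{\na_1\lefta\na_2}\,\stab^{\fC,T^{1/2},\na_2}_{ys_\al}.
\]
Now apply $e^{-y\mu}\calL_\mu\otimes(-)$ to both sides. The left-hand side becomes $\stab^{\fC,T^{1/2},\na_1+\mu}_y$ by \eqref{eq:shift}. On the right-hand side, the first term becomes $\stab^{\fC,T^{1/2},\na_2+\mu}_y$; for the second term I would insert $e^{-ys_\al\mu}\calL_\mu$ and compensate, writing
\[
e^{-y\mu}\calL_\mu\otimes\stab^{\fC,T^{1/2},\na_2}_{ys_\al} = e^{-y\mu+ys_\al\mu}\Bigl(e^{-ys_\al\mu}\calL_\mu\otimes\stab^{\fC,T^{1/2},\na_2}_{ys_\al}\Bigr) = e^{ys_\al\mu-y\mu}\,\stab^{\fC,T^{1/2},\na_2+\mu}_{ys_\al}.
\]
Since $\na_1+\mu$ and $\na_2+\mu$ are adjacent alcoves separated by the hyperplane $H_{\al^\vee,n+(\mu,\al^\vee)}$ (translation by $\mu$ shifts the hyperplane but preserves its direction, hence preserves $\al^\vee$ and the condition $ys_\al\prec_\fC y$), comparing with Lemma \ref{lem:coefficients} applied to this new pair yields $f_y^{\na_1+\mu\lefta\na_2+\mu} = e^{ys_\al\mu - y\mu}\, f_y^{\na_1\lefta\na_2}$. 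Finally I would simplify the exponent: $ys_\al\mu - y\mu = y(s_\al\mu-\mu) = y\bigl(-(\mu,\al^\vee)\al\bigr) = -(\mu,\al^\vee)\,y\al$, using $s_\al\mu = \mu-(\mu,\al^\vee)\al$. This gives exactly $f_y^{\na_1+\mu\lefta\na_2+\mu} = e^{-(\mu,\al^\vee)\,y\al}\,f_y^{\na_1\lefta\na_2}$, as claimed.

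There is essentially no serious obstacle here — this is a bookkeeping argument — but the one point that needs genuine care is the claim that $\na_1+\mu$ and $\na_2+\mu$ really are separated by $H_{\al^\vee, n+(\mu,\al^\vee)}$ with the same coroot $\al^\vee$ and the same relative order of $y$ and $ys_\al$, so that the uniqueness of the decomposition in Lemma \ref{lem:coefficients} lets us equate the two coefficients. This is immediate from the fact that the affine hyperplane arrangement $\{H_{\al^\vee,n}\}$ is stable under translation by $\Lambda$, with the linear parts $\al^\vee$ unchanged, and that the partial order $\preceq_\fC$ depends only on $\fC$, not on the alcove; I would state this explicitly rather than leave it implicit.
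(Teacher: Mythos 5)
Your proof is correct and is essentially the same argument the paper gives: both derive the relation from Lemma~\ref{lem:coefficients} together with the translation formula~\eqref{eq:shift} and then simplify the exponent via $\mu - s_\al\mu = (\mu,\al^\vee)\al$. The paper states this in two lines; your version spells out the tensoring-by-$\calL_\mu$ step and the (correct, worth noting) observation that translation preserves the linear part $\al^\vee$ of the separating hyperplane and the order $\preceq_\fC$, so uniqueness in Lemma~\ref{lem:coefficients} applies.
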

\begin{proof}
	By Lemma \ref{lem:coefficients} and Equation \eqref{eq:shift},
	\[f_y^{\na_1\leftarrow \na_2}=e^{y\mu-ys_\alpha\mu}f_y^{\na_1+\mu\leftarrow \na_2+\mu}.\]
	Hence the conclusion follows from the identity $\mu-s_\al\mu=(\mu, \al^\vee)\al$. 
\end{proof}

With this lemma, we can reduce the wall crossing for $H_{\alpha^\vee, n}$ to $H_{\alpha^\vee, 0}$ as follows.
Let $\na_1,\na_2$ be adjacent alcoves separated by $H_{\al^\vee, n}$.   If $\alpha$ is simple, then $\na_1-n\varpi_\alpha,\na_2-n\varpi_\alpha$ are adjacent alcoves separated by $H_{\al^\vee, 0}$, where $\varpi_\alpha$ is the corresponding fundamental weight. Otherwise, $\alpha=w\beta$ for some $w\in W$ and simple root $\beta$. Then we can shift the alcoves $\na_i$ by $-nw(\varpi_\beta)$ to get adjacent alcoves separated by $H_{\al^\vee, 0}$. 

Therefore, we only need to cross the walls on the root hyperplanes $H_{\alpha^\vee, 0}$. These are divided into two cases depending on whether $\alpha$ is simple or not.

\section{Crossing the simple walls}
In this section, we compute the formula for crossing the simple walls. 
\subsection{The formulas}
The following is the main result of this section.

\begin{thm}\label{thm:simplewall}
	Let $\alpha^\vee$ be a simple coroot. Suppose $\na$ has a wall on $H_{\al^\vee, 0}$, and $(\la, \al^\vee)>0, \forall \la\in \na$. Then
	\begin{align}
	\label{eq:heckewallp} \stp{s_\al\na}{y}=&
	\left\{ \begin{array}{ll}
	\stp{\na}{y}+(q^{{1/2}}-q^{-{1/2}})\cdot\stp{\na}{ys_{\alpha}},  & \textit{ if\quad } ys_{\alpha}< y;\\
	\stp{\na}{y},  & \text{ if\quad } ys_{\alpha}>y.
	\end{array}\right. \\ 
	\label{eq:heckewallm}
	\stm{s_\al\na}{y}=&
	\left\{ \begin{array}{ll}
	\stm{\na}{y}+(q^{{1/2}}-q^{-{1/2}})\cdot\stm{\na}{ys_{\alpha}}, & \textit{ if\quad } ys_{\alpha}> y;\\
	\stm{\na}{y}, & \text{ if\quad } ys_{\alpha}<y.
	\end{array}\right.
	\end{align}
\end{thm}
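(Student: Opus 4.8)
The plan is to reduce \eqref{eq:heckewallp} to the computation of a single scalar, evaluate that scalar by restriction to a fixed point, and then obtain \eqref{eq:heckewallm} from \eqref{eq:heckewallp} by duality. Concretely, by Lemma~\ref{lem:coefficients} applied to the adjacent alcoves $\na_1=s_\alpha\na$ and $\na_2=\na$ — which share the wall of $\na$ lying on $H_{\alpha^\vee,0}$ — the formula \eqref{eq:heckewallp} holds with $q^{1/2}-q^{-1/2}$ replaced by a coefficient $f_y:=f_y^{\,s_\alpha\na\leftarrow\na}$ that, since $n=0$, lies in $K_{\bbC^*}(\pt)=\bbZ[q^{\pm1/2}]$ and carries no $A$-equivariant parameters; the case $ys_\alpha>y$ is exactly the second line of that lemma. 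So everything comes down to showing $f_y=q^{1/2}-q^{-1/2}$ whenever $ys_\alpha<y$, and by Lemma~\ref{lem:shift} (translating by weights orthogonal to $\alpha^\vee$; compare Theorem~\ref{thm:nonsimplewall}) this scalar does not depend on which wall on $H_{\alpha^\vee,0}$ we cross, so I may use any convenient pair $(\na,s_\alpha\na)$. I would prove only \eqref{eq:heckewallp} and then deduce \eqref{eq:heckewallm}.

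Second, to pin down $f_y$ I would restrict the identity $\stp{s_\alpha\na}{y}=\stp{\na}{y}+f_y\,\stp{\na}{ys_\alpha}$ to the fixed point $ys_\alpha$, giving
\[
f_y=\frac{\stp{s_\alpha\na}{y}|_{ys_\alpha}-\stp{\na}{y}|_{ys_\alpha}}{\stp{\na}{ys_\alpha}|_{ys_\alpha}},
\]
where the denominator is the alcove-independent normalization \eqref{eq:normp}. The two off-diagonal restrictions in the numerator I would compute by transporting the problem to the basis $\stp{\na_-}{\cdot}$, on which the Demazure--Lusztig operators act by the transparent formulas of Theorem~\ref{lem:actiononstable}: the translation formula \eqref{eq:shift} and the sheaf-duality Lemma~\ref{lem:sheafdual} move between alcoves, and the recursion \eqref{eq:recursion} computes restrictions of $\stp{\na_-}{w}$ to fixed points. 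This is exactly the bookkeeping carried out by hand for $\SL(2,\bbC)$ and $\SL(3,\bbC)$ above, and the same argument should carry over to general $G$; the $A$-parameters cancel and one is left with $f_y=q^{1/2}-q^{-1/2}$, independent of $y$. (A cleaner variant, if it can be made to run: having read off the value from the rank-one case, verify by uniqueness of the stable basis that $\stp{\na}{y}+[ys_\alpha<y](q^{1/2}-q^{-1/2})\stp{\na}{ys_\alpha}$ satisfies the support, normalization, and degree conditions of $\stp{s_\alpha\na}{y}$. Support and normalization are immediate, because $\stp{\na}{ys_\alpha}|_y=0$ when $ys_\alpha<y$ and the normalization condition is alcove-independent; the degree condition reduces to comparing the degree bounds attached to $\na$ and to $s_\alpha\na=s_\alpha\cdot\na$, the two being related by an explicit character depending linearly on $v\alpha$.)

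Third, to pass from \eqref{eq:heckewallp} to \eqref{eq:heckewallm}, fix an alcove $\mathbf N$ with a wall on $H_{\alpha^\vee,0}$ and $(\lambda,\alpha^\vee)>0$ on $\mathbf N$. By the duality \eqref{eq:dual}, $\{\stm{-\mathbf N}{w}\}_w$ is the basis dual to $\{\stp{\mathbf N}{v}\}_v$ under $\langle\cdot,\cdot\rangle$, and $\{\stm{-s_\alpha\mathbf N}{w}\}$ is dual to $\{\stp{s_\alpha\mathbf N}{v}\}$. By \eqref{eq:heckewallp} the transition matrix from the $\mathbf N$-basis to the $s_\alpha\mathbf N$-basis is $C=I+(q^{1/2}-q^{-1/2})N$ with $N=\sum_{v:\,vs_\alpha<v}E_{v,\,vs_\alpha}$ and $N^2=0$, so $C^{-1}=2I-C$ and the dual bases transform by $(C^{-1})^{T}$. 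Writing this out and relabeling $\na:=s_\alpha(-\mathbf N)$ — which again lies on the $(\lambda,\alpha^\vee)>0$ side and satisfies $s_\alpha\na=-\mathbf N$ — turns the resulting relation into precisely \eqref{eq:heckewallm}; the coefficient comes out $+(q^{1/2}-q^{-1/2})$ because the two sign changes (transposing-and-inverting $C$, and then solving for $\stm{s_\alpha\na}{y}$) cancel. As $\mathbf N$ ranges over all admissible alcoves so does $\na$, so \eqref{eq:heckewallm} holds in general.

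I expect the real obstacle to be the middle step for general $G$: the off-diagonal restriction $\stp{s_\alpha\na}{y}|_{ys_\alpha}$ is taken at the non-standard alcove $s_\alpha\na$, so one must first rewrite $\stp{s_\alpha\na}{y}$ through the $\na_-$-basis (via a chain of already-known wall-crossings, or a Weyl-group-plus-duality manipulation) and then check that iterating \eqref{eq:recursion} genuinely annihilates all $A$-dependence and collapses the quotient to the single scalar $q^{1/2}-q^{-1/2}$, with no residual dependence on $y$ or on $\na$. In the uniqueness-based variant the same difficulty resurfaces as the verification of the degree condition at $s_\alpha\na$, the only one of the three defining conditions that is not immediate.
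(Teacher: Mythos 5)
Your framing---reduce to a single scalar $f_y\in K_{\bbC^*}(\pt)$ via Lemma~\ref{lem:coefficients}, evaluate it, then deduce the other formula by duality---is a sound outline, and your duality step is correct (the transition matrix $C=I+(q^{1/2}-q^{-1/2})N$ is unipotent, $(C^{-1})^{T}$ transports the dual basis, and the relabeling $\na:=s_\alpha(-\mathbf{N})$ does land on the $(\lambda,\alpha^\vee)>0$ side and recover \eqref{eq:heckewallm}; the paper does essentially the same computation, only in the direction \eqref{eq:heckewallm}$\Rightarrow$\eqref{eq:heckewallp}). But the middle step, which you acknowledge as the real obstacle, is a genuine gap and not just a bookkeeping issue. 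The numerator of your formula for $f_y$ involves the off-diagonal restriction $\stp{s_\alpha\na}{y}|_{ys_\alpha}$ at the unknown alcove $s_\alpha\na$; to transport this to $\na_\pm$ via translations \eqref{eq:shift}, duality \eqref{eq:dualm}--\eqref{eq:dualp} and the recursion \eqref{eq:recursion} requires precisely the wall-crossing you are trying to prove, so the argument as stated is circular. Moreover, the claim that Lemma~\ref{lem:shift} already shows $f_y$ is independent of which wall on $H_{\alpha^\vee,0}$ is crossed is not available at this point: \eqref{eq:shift} gives equality only under translations $\mu$ with $(\mu,\alpha^\vee)=0$, and such translations do not in general act transitively on the facets of $H_{\alpha^\vee,0}$; the independence-of-wall statement is a \emph{consequence} of Theorems~\ref{thm:simplewall} and \ref{thm:nonsimplewall}, not a usable input (see the remark after Theorem~\ref{thm:nonsimplewall}). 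Your ``cleaner variant'' has the same gap in disguise: the support and normalization conditions for $\stp{\na}{y}+(q^{1/2}-q^{-1/2})\stp{\na}{ys_\alpha}$ are indeed immediate, but the degree condition at $s_\alpha\na$ is the entire content of the theorem, and you give no argument for it.

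The paper avoids all of this by a different mechanism. It proves \eqref{eq:heckewallm} first, by applying the Demazure--Lusztig operator $D_\alpha=-T_\alpha-1$ to the identity $\stm{\na}{y}+f_y\stm{\na}{ys_\alpha}=\stm{s_\alpha\na}{y}$ and comparing coefficients in the $\stm{\na}{\cdot}$-basis. The non-trivial input is Lemma~\ref{lem:generalD}: for \emph{any} alcove $\na$ whose closure meets $H_{\alpha^\vee,0}$ and on either side of the wall, the leading coefficients of $D_\alpha(\stm{\na}{y})$ in $\stm{\na}{y}$ and $\stm{\na}{ys_\alpha}$ are explicitly one of $-1,-q,-q^{1/2}$. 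That lemma is proved by writing these coefficients as localization sums \eqref{eq:ay}, trimming the index set with Lemma~\ref{lem:wyz}, and pinning down the remaining two coefficients by an asymptotic (degree-condition) argument in a generic cocharacter direction $\xi$---this is where the hypothesis $(\lambda,\alpha^\vee)>0$ is actually used, by choosing $\lambda\in\na$ close to the wall so that $-1<(\lambda,\alpha^\vee)(y\alpha,\xi)<0$ and the Newton-polygon bounds collapse. If you want to salvage the uniqueness-based variant, the work you'd need to supply to check the degree condition at $s_\alpha\na$ is essentially the content of Lemma~\ref{lem:generalD}; the $D_\alpha$-route is the more efficient way to package it.
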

\begin{proof}
	We first show that Equation \eqref{eq:heckewallp} follows from Equation \eqref{eq:heckewallm} via duality \eqref{eq:dual}. By Lemma \ref{lem:coefficients}, we can assume $ys_\al<y$ and it suffices to compute
	\begin{align*}
	&\langle \stp{s_\al\na}{y}, \stm{-\na}{ys_{\alpha}}\rangle\\
	=&\langle \stp{s_\al\na}{y}, \stm{-s_\al\na}{ys_{\alpha}}+(q^{{1/2}}-q^{-{1/2}})\stm{-s_\al\na}{y}\rangle\\
	=&q^{{1/2}}-q^{-{1/2}},
	\end{align*}
	where the first equality follows from Equation \eqref{eq:heckewallm} with the alcove being $s_\alpha(-\na)$ satisfying the condition of  theorem.
	
	Now we prove Equation \eqref{eq:heckewallm}. From Lemma \ref{lem:coefficients} it suffices to assume that $ys_\al>y$. Applying the operator $D_\al$ from \S~\ref{sec:heckeaction} to the identity 
	\begin{equation*}
	\stm{\na}{y}+f^{s_\al\na\lefta\na}_y\stm{\na}{ys_\al}=\stm{s_\al \na}{y}, 
	\end{equation*}
	and use \eqref{eq:D.gen.p} and \eqref{eq:D.gen.m} below, we get 
	\begin{align*}
	&-\stm{\na}{y}-q^{1/2}\stm{\na}{ys_\al}+\sum_{w\not<ys_\al }a_w\stm{\na}{w}+f^{s_\al\na\lefta\na}_y(-q\stm{\na}{ys_\al}-q^{1/2}\stm{\na}{y}+\sum_{w\not<ys_\al }a_w\stm{\na}{w})\\
	&=-q\stm{s_\al\na}{y}-q^{1/2}\stm{s_\al\na}{ys_\al}+\sum_{w\not< ys_\al}a_w\stm{s_\al\na}{w}\\
	&\overset{\text{Lem.} \ref{lem:coefficients}}=-q(\stm{\na}{y}+f^{s_\al\na\lefta\na}_y\stm{\na}{ys_\al})-q^{1/2}\stm{\na}{ys_\al}+\sum_{w\not< ys_\al}a_w\stm{s_\al\na}{w},
	\end{align*}
	where the $a_w$ are some coefficients. These coefficients  are determined by \eqref{eq:D.gen.p} and \eqref{eq:D.gen.m} in Lemma~\ref{lem:generalD} below. 
	By comparing the coefficients of $\stm{\na}{y}$, we see that $f^{s_\al\na\lefta \na}_y=q^{1/2}-q^{-1/2}$. Note that the sums in the above identity will have no contribution to such coefficients. The first two are obvious since $ys_\alpha>y$. The reason for the last one is that $\stm{s_\al\na}w$ is a linear combination of $\stm{\na}{w}$ and $\stm{\na}{ws_\al}$, and both will not be equal to $\stm{\na}{y}$ when $w\not<ys_\al$. 
\end{proof}

\subsection{Some technical lemmas}
In this subsection we prove Lemma \ref{lem:generalD}, which is used in the proof of Theorem \ref{thm:simplewall}. First we recall a well-known fact.
\begin{lem}\cite[Theorem 1.1]{D}\label{lem:classical}
	Let $w_1, w_2\in W$, and $\alpha$ be a simple root. Assume $w_1s_\alpha<w_1$ and $w_2s_\alpha<w_2$. Then 
	\[w_1\leq  w_2\Leftrightarrow w_1s_\alpha\leq w_2\Leftrightarrow w_1s_\alpha\leq w_2s_\alpha.\]
\end{lem}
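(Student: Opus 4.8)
The statement is essentially the right-handed \emph{lifting property} of the Bruhat order, and the plan is to deduce it from the subword (strong exchange) characterization: for $u,w\in W$ one has $u\le w$ if and only if, for one — equivalently every — reduced word $w=s_{i_1}\cdots s_{i_k}$, the element $u$ is the product of some subsequence of $(s_{i_1},\dots,s_{i_k})$. I will take this as known; it is one of Deodhar's characterizations and follows from the exchange condition by induction on length.

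First I would dispose of the two cheap implications. Since $w_1s_\alpha<w_1$ gives $w_1s_\alpha\le w_1$, transitivity yields $w_1\le w_2\Rightarrow w_1s_\alpha\le w_2$; since $w_2s_\alpha<w_2$ gives $w_2s_\alpha\le w_2$, transitivity yields $w_1s_\alpha\le w_2s_\alpha\Rightarrow w_1s_\alpha\le w_2$. Hence everything reduces to the single implication $w_1s_\alpha\le w_2\ \Rightarrow\ \bigl(w_1\le w_2\ \text{and}\ w_1s_\alpha\le w_2s_\alpha\bigr)$.

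For this core step I would exploit the descent $w_2s_\alpha<w_2$: choose a reduced word $w_2=s_{i_1}\cdots s_{i_{k-1}}s_\alpha$ ending in $s_\alpha$ (possible since $w_2s_\alpha<w_2$, by appending $s_\alpha$ to a reduced word for $w_2s_\alpha$), so that $(s_{i_1},\dots,s_{i_{k-1}})$ is reduced for $w_2s_\alpha$. Assuming $w_1s_\alpha\le w_2$, the subword property produces a subsequence $J$ of $(s_{i_1},\dots,s_{i_{k-1}},s_\alpha)$ with product $w_1s_\alpha$, and I split on whether $J$ uses the final letter. If it does not, then $J$ already witnesses $w_1s_\alpha\le w_2s_\alpha$, while $J$ together with the final position is a subsequence with product $(w_1s_\alpha)s_\alpha=w_1$, witnessing $w_1\le w_2$. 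If it does, deleting that final letter gives a subsequence of $(s_{i_1},\dots,s_{i_{k-1}})$ with product $(w_1s_\alpha)s_\alpha=w_1$, so $w_1\le w_2s_\alpha\le w_2$, and then $w_1s_\alpha\le w_1\le w_2s_\alpha$. Either way both desired inequalities hold.

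The only genuine content is the subword characterization in its "for every reduced word" strength; modulo that, the argument is pure bookkeeping with subwords, so I do not expect a serious obstacle. If one is instead willing to cite the lifting property directly (e.g.\ Bj\"orner--Brenti, Prop.~2.2.7), the lemma is immediate, but then the substance has merely been relocated to that reference, so I would prefer the self-contained route above.
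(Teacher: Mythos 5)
Your proof is correct, but note that the paper supplies no proof at all for this lemma: it is stated as a citation of Deodhar's 1977 paper (his ``lifting property'' of the Bruhat order), so there is no in-text argument to compare against. What you have written is a clean, self-contained derivation from the subword characterization, and it would serve as a perfectly good expository proof.

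One small remark that would tighten it: your Case~2 is vacuous. If $J$ is a reduced word for $w_1s_\alpha$ that uses the final letter $s_\alpha$, then deleting that letter leaves a word of length $\ell(w_1s_\alpha)-1=\ell(w_1)-2$ whose product is $w_1$, which is impossible since every word for $w_1$ has length at least $\ell(w_1)$. So only Case~1 can occur, and the two-case split can be replaced by the observation that any reduced subword witnessing $w_1s_\alpha\le w_2$ in the chosen reduced word (the one ending in $s_\alpha$) must avoid the final letter. Your Case~2 conclusion does happen to be sound under the ``non-reduced subword'' form of the subword property that you stated, so there is no error, but you are doing unnecessary work there.
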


In preparation for the proof of Lemma  \ref{lem:generalD}, we have the following lemma. 
\begin{lem}\label{lem:wyz} Let $w, y\in W$. 
	\begin{enumerate}
		\item Assume    $y\le z\le w$. 
		\begin{enumerate}
			\item If $ys_\al<y$, $w\le y$, and  $w\not\in \{y, ys_\al\}$, then there is no such $z$.
			\item If $ys_\al>y$, $w\le ys_\al$, and $w\not\in \{y, ys_\al\}$, then there is no such $z$.
			\item If $w=y$, then $z=w=y$. 
			\item If $ys_\al<y$ and $w=ys_\al$, then there is no such $z$. 
			\item If $ys_\al>y$ and $w=ys_\al$, then $z=y$ or $z=ys_\al$.
		\end{enumerate}
		\item Assume $y\le z$ and $w\ge zs_\al$. 
		\begin{enumerate}
			\item If $ys_\al<y$,  $w\le y$, and $w\not\in \{y, ys_\al\}$, then there is no such $z$.
			\item If $ys_\al>y$, $w\le ys_\al$, and $w\not\in \{y, ys_\al\}$, then there is no such $z$.
			\item If $ys_\al<y$ and $w=y$, then $z=y$. 
			\item If $ys_\al>y$ and $w=y$, then $z=ys_\al$. 
			\item If $ys_\al<y$ and $w=ys_\al$, then $z=y$. 
			\item If $ys_\al>y$ and $w=ys_\al$, then $z=y$ or $z=ys_\al$. 
		\end{enumerate}
		\item If $w>\min\{y, ys_\al\}$ and $w\neq y, ys_\al$, then $y<\max\{w, ws_\al\}$.
	\end{enumerate}
\end{lem}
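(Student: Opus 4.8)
The plan is to reduce every item to the Bruhat-order facts recorded in Lemma \ref{lem:classical} (Deodhar), using the general principle that for a simple reflection $s_\al$ and any $x$, exactly one of $x, xs_\al$ is below the other, and that left-multiplication structure is irrelevant here since all comparisons involve the right coset $\{y, ys_\al\}$. I would organize the three parts so that part (1) does most of the work and parts (2), (3) follow from it.

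For part (1): set $m=\min\{y,ys_\al\}$, $M=\max\{y,ys_\al\}$, so $m s_\al = M$ and $m<M$. I would first observe the elementary ``$Z$-property'' of the Bruhat interval on a coset: if $m\le w\le M$ then $w\in\{m,M\}$. This follows because $\ell(M)=\ell(m)+1$, so the interval $[m,M]$ has length one and contains only its endpoints. Given this, (a) and (b) are immediate: the hypothesis ``$w\le y$'' (resp. ``$w\le ys_\al$'') combined with $w\notin\{y,ys_\al\}$ forces, for any putative $z$ with $y\le z\le w$, that $y\le w$; but $y\le w\le y$ in case (a) would give $w=y$, a contradiction — more carefully, in case (a) we have $ys_\al<y$ so $y=M$, and $y\le z\le w\le y=M$ forces $z=y=w$, contradicting $w\neq y$; case (b) is the mirror image with $y=m$, where $y\le z\le w\le ys_\al=M$ puts $z$ in $[m,M]=\{m,M\}=\{y,ys_\al\}$ and then $z\le w$ with $w\notin\{y,ys_\al\}$, $w\le M$ is impossible unless $w=M$, again excluded. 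Items (c), (d), (e) are direct: (c) is trivial; (d) has $w=ys_\al<y$, so $y\le z\le ys_\al<y$ is absurd; (e) has $w=ys_\al>y$, so $z\in[y,ys_\al]=\{y,ys_\al\}$.

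For part (2): the condition ``$y\le z$ and $zs_\al\le w$'' I would handle by splitting on whether $z>zs_\al$ or $z<zs_\al$. If $z<zs_\al$ then $z\le zs_\al\le w$, reducing to part (1)-type comparisons; if $z>zs_\al$, apply Lemma \ref{lem:classical} with $w_1=z$ (so $zs_\al<z$, i.e. $w_1s_\al<w_1$) to replace ``$zs_\al\le w$'' by statements about $z$ versus $w$ or $ws_\al$, again landing in part (1). Concretely, in each subcase (a)–(f) I would: use $y\le z$ to bound $z$ from below by $y$ (hence by $m$), use the $w$-hypothesis to bound $z$ (or $zs_\al$) from above, and invoke Lemma \ref{lem:classical} to collapse the options to $\{y,ys_\al\}$; then read off exactly which of $y$, $ys_\al$ survive. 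For instance in (c), $ys_\al<y$, $w=y$: we need $y\le z$, $zs_\al\le y$; if $z>zs_\al$ then $zs_\al\le y$ and $z\ge y$ with (by Lemma \ref{lem:classical}, $w_1=z$, $w_2=y$) $z\le y$, so $z=y$; if $z<zs_\al$ then $z\le zs_\al\le y$ and $z\ge y$ force $z=y$, but then $zs_\al=ys_\al<y=w$ is still fine — wait, that also gives $z=y$, consistent. The remaining subcases are similar bookkeeping.

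For part (3): this is the cleanest. Write $m=\min\{y,ys_\al\}$; the hypothesis is $w>m$, $w\notin\{y,ys_\al\}$, and the goal is $y<\max\{w,ws_\al\}=:M'$. If $w<ws_\al$ then $M'=ws_\al\ge w>m$; I would apply Lemma \ref{lem:classical} (with $w_1=$ the larger of $y,ys_\al$ and $w_2=ws_\al$, after checking the ``$s_\al$ decreases'' hypotheses) to upgrade $w>m$ to $y\le ws_\al=M'$, then rule out equality using $w\notin\{y,ys_\al\}$. If $w>ws_\al$ then $M'=w>m$ directly, and similarly $w>m$ with the coset structure plus Lemma \ref{lem:classical} gives $y\le w$, with equality excluded by hypothesis, so $y<w=M'$.

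The main obstacle I anticipate is purely combinatorial stamina rather than conceptual difficulty: correctly tracking, in part (2), the six subcases through the case split on $z$ vs.\ $zs_\al$ and applying Deodhar's lemma with the right choice of $w_1, w_2$ each time, making sure the hypothesis ``$w_i s_\al < w_i$'' genuinely holds before invoking Lemma \ref{lem:classical}. A clean way to avoid errors is to prove once and for all the two auxiliary facts ``$m\le x\le M\Rightarrow x\in\{m,M\}$'' and ``$x\ge m \Rightarrow x\ge m$ can be tested on the coset'', then feed them uniformly into all subcases.
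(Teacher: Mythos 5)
Your strategy is sound and, if executed carefully, yields a correct proof. You have correctly isolated the key combinatorial input — that the Bruhat interval between $m=\min\{y,ys_\al\}$ and $M=\max\{y,ys_\al\}$ is just $\{m,M\}$ — and identified part (2) as the only place where something beyond that fact is needed, handled by a case split on $z$ versus $zs_\al$ plus Lemma~\ref{lem:classical}.

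The paper runs part (2) differently: for (2a), (2b), (2f) it argues through explicit chains of length inequalities (e.g.\ $\ell(z)\ge\ell(y)>\ell(w)\ge\ell(zs_\al)\ge\ell(z)-1$) and, in the sticky branch of (2b) where $zs_\al<z$ and $\ell(z)=\ell(y)+1$, it writes the covering $z=ys_\beta$ and runs a root computation to show $\beta=\alpha$. Your route instead feeds $zs_\al\le w\le ys_\al$ into Lemma~\ref{lem:classical} with $(w_1,w_2)=(z,\,ys_\al)$ — both of which satisfy $\cdot\, s_\al<\cdot$ in the case $zs_\al<z$ — to get $z\le ys_\al$ directly, then uses the interval fact to force $z\in\{y,ys_\al\}$ and derive the contradiction. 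This is cleaner and treats (2a)--(2f) more uniformly than the paper's mix of length counting and root bookkeeping; what the paper's approach buys is that it never needs the interval fact explicitly, only length inequalities, but the root argument in (2b) is more work. Both reach the same place. One small slip to watch: in your worked (2c), in the branch $z<zs_\al$ you conclude $z=y$ and call it ``consistent,'' but $z=y$ together with $z<zs_\al$ says $y<ys_\al$, contradicting the standing hypothesis $ys_\al<y$, so that branch is in fact vacuous. It does not change the answer, but the same vacuity shows up in several subcases and should be noticed rather than silently carried through.
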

\begin{proof}
	(1a)-(1d) are obvious. For (1e), we have $ys_\al=w\ge z\ge y$. So $z=y$ or $z=ys_\al$. 
	
	(2a): $w<y$ and $w\neq ys_\al$ imply that $
	z\ge y>w\ge zs_\al$, 
	so
	\[
	\ell(z)\ge \ell(y)>\ell(w)\ge \ell(zs_\al)\ge \ell(z)-1.
	\]
	So $\ell(z)=\ell(y)$ and $\ell(w)=\ell(zs_\al)$. That is, $z=y$ and $w=zs_\al$. Hence, $w=ys_\al$, which is  a contradiction. So no such $z$ exists. 
	
	(2b): If $z=y$, then $w\ge ys_\al$, which contradicts to $ys_\alpha>w$. So $y<z$.
	
	If $zs_\al>z$, then $\ell(y)+1=\ell(ys_\al)> \ell(w)\geq \ell(zs_\al)=\ell(z)+1$. So $\ell(y)>\ell(z)$. This contradicts to the assumption that $y\le z$. 
	
	If $zs_\al<z$, then $\ell(y)+1=\ell(ys_\al)>\ell(w)\geq \ell(zs_\al)=\ell(z)-1$. So $\ell(y)+2>\ell(z)> \ell(y)$ which forces $\ell(z)=\ell(y)+1$, i.e., $z=ys_\be$ for some positive root $\be$ such that $y\be>0$.  We claim $\al=\be$. If not, then $s_\al\be>0$.  But $\ell(ys_\al)=\ell(y)+1=\ell(z)>\ell(zs_\al)=\ell(ys_\be s_\al)=\ell(ys_\al s_{s_\al \be})$, which implies $ys_\al(s_\al(\be))=y\beta<0$. This contradicts to the assumption that $y\be>0$.  Therefore $\al=\be$, and $z=ys_\al$. 
	
	Then $ys_\al>w\geq zs_\al=y$. Since $w\neq y$, such $w$ does not exist.
	
	(2c): We have $z\ge y\ge zs_\al$. So $z=y$ or $z=ys_\al$. Since $z\geq y$, $z$ must be equal to $y$.
	
	(2d): The proof is similar as that of (2c). 
	
	(2e): Since $zs_\al\le ys_\al<y\le z$, $z=y$. 
	
	(2f): It is easy to see that $z=y$ and $z=ys_\al$ satisfy the conditions. Now assume that $ys_\al>y, z>y$ and $ys_\al>zs_\al$. Then $\ell(y)+1=\ell(ys_\al)>\ell(zs_\al)\ge \ell(z)-1$. Hence, $\ell(y)+2>\ell(z)>\ell(y)$, which implies that $\ell(z)=\ell(y)+1$. It follows similarly as the proof of (2b) that $z=ys_\al$. 
	
	(3): It follows from a case-by-case study, using Lemma~\ref{lem:classical}.
\end{proof}

Now we are ready to prove the following technical lemma, which has been used in the proof of Theorem \ref{thm:simplewall}. 
\begin{lem}\label{lem:generalD} Let $\al$ be a simple root, and $\na$ be an alcove whose closure intersects with $H_{\al^\vee, 0}$ non-trivially. 
	\begin{enumerate}
		\item If  $(\la, \al^\vee)>0, \forall\la\in \na$, then 
		\begin{align}\label{eq:D.gen.p}
		D_\alpha(\stm{\na}{y})&=\left\{\begin{array}{cc}-q\stm{\na}{y}
		-q^{{1/2}}\stm{\na}{ys_{\alpha}}+\sum_{w\not< y}a_w\stm{\na}{w}, & \text{ if }ys_\al<y;\\
		-\stm{\na}{y}-q^{1/2}\stm{\na}{ys_\al}+\sum_{w\not< ys_\al}a_w \stm{\na}{w}, & \text{ if } ys_\al>y.\end{array}\right.
		\end{align}
		\item If $(\la, \al^\vee)<0, \forall \la\in \na$, then 
		\begin{align}\label{eq:D.gen.m}
		D_\alpha(\stm{\na}{y})&=\left\{\begin{array}{cc}-\stm{\na}{y}
		-q^{{1/2}}\stm{\na}{ys_{\alpha}}+\sum_{w\not< y}a_w\stm{\na}{w}, & \text{ if }ys_\al<y;\\
		-q\stm{\na}{y}-q^{1/2}\stm{\na}{ys_\al}+\sum_{w\not< ys_\al}a_w \stm{\na}{w}, & \text{ if } ys_\al>y.\end{array}\right.
		\end{align}
		Here $a_w\in K_T(\pt)$. Note that we have been sloppy about the coefficients $a_w$, since they clearly depend on $\na$, $y$, and $\al$. 
	\end{enumerate}
\end{lem}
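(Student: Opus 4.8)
The plan is to reduce everything to the known case $\na=\na_+$ of Theorem~\ref{lem:actiononstable}. I will describe part~(1) in detail; part~(2) is entirely parallel, with $\na_+$ replaced by $\na_-$ (whose $2\times2$ block of $D_\alpha$ I would obtain from Theorem~\ref{lem:actiononstable} via \eqref{eq:dualm}), and it can also be deduced from~(1) using the sheaf duality of Lemma~\ref{lem:sheafdual}. Throughout set $z=\min\{y,ys_\alpha\}$ and $z'=\max\{y,ys_\alpha\}$ in the Bruhat order, so that $z'=zs_\alpha$, $\ell(z')=\ell(z)+1$, and nothing lies strictly between $z$ and $z'$. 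The argument has two steps: first a triangularity statement, then the identification of the coefficients of $\stm{\na}{z}$ and $\stm{\na}{z'}$.

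\emph{Step 1: triangularity.} Since $D_\alpha=-T_\alpha-1$, I would use $D_\alpha(\calF)=\pi_{1*}(\pi_2^*\calF\otimes\pi_2^*\calL_\alpha)$. The $T$-fixed points of $T^*_{Y_\alpha}$ lying over a fixed point $v$ under $\pi_1$ are $(v,v)$ and $(v,vs_\alpha)$, and $\pi_2$ sends them to $v$ and $vs_\alpha$; hence by the localization (fixed-point) formula for $\pi_{1*}$, the class $D_\alpha(\calF)|_v$ is a $\Frac K_T(\pt)$-combination of $\calF|_v$ and $\calF|_{vs_\alpha}$, with coefficients depending only on $\alpha$ and $v$. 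Because $\stm{\na}{y}$ is supported on $\FAttr_{\fC_-}(y)$, its restriction to a fixed point $u$ vanishes unless $u\ge y$. Combining this with the Bruhat-order identity $\{w:\, w\ge y\ \text{or}\ ws_\alpha\ge y\}=\{w:\, w\ge z\}$ — which I would establish from the lifting property (Lemma~\ref{lem:classical}) together with Lemma~\ref{lem:wyz}(3) — gives $D_\alpha(\stm{\na}{y})|_v=0$ for all $v\not\ge z$. An induction on the Bruhat order then forces the coefficient of $\stm{\na}{w}$ in $D_\alpha(\stm{\na}{y})=\sum_w c_w\stm{\na}{w}$ to vanish unless $w\ge z$: for a Bruhat-minimal offending $w$, restriction to the fixed point $w$ kills every other basis element (either by support, or by minimality) and yields $0=c_w\,\stm{\na}{w}|_w\ne0$. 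Any surviving $w\ne z,z'$ then has $w\ge z$, hence $w\not<z'$, which is exactly the index range in the error sums of \eqref{eq:D.gen.p} once one notes $z'=y$ when $ys_\alpha<y$ and $z'=ys_\alpha$ when $ys_\alpha>y$.

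\emph{Step 2: the leading $2\times2$ block is $\na$-independent.} By Step~1, the coefficients of $\stm{\na}{z}$ and $\stm{\na}{z'}$ in $D_\alpha(\stm{\na}{z})$ and $D_\alpha(\stm{\na}{z'})$ are obtained by restricting these classes to the fixed points $z$ and $z'$ (at $z$ only $\stm{\na}{z}$ can contribute; at $z'$ only $\stm{\na}{z}$ and $\stm{\na}{z'}$). The localization formula for $D_\alpha$ then expresses each such restriction through $\stm{\na}{z}|_z$, $\stm{\na}{z'}|_{z'}$ (given by \eqref{eq:normm}), $\stm{\na}{z'}|_z=0$ (support), and the single off-diagonal value $\stm{\na}{z}|_{z'}$. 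So I would next show that $\stm{\na}{z}|_{z'}$ is the same for every alcove $\na$ in the slab $0<(\lambda,\alpha^\vee)<1$ (which contains $\na_+$ and all $\na$ allowed in part~(1)). For this I would join two such alcoves by a path of alcoves inside the slab — i.e. crossing only hyperplanes $H_{\beta^\vee,m}$ with $s_\beta\ne s_\alpha$ — and observe, via Lemma~\ref{lem:coefficients}, that crossing such a wall changes $\stm{\cdot}{u}$ only by adding a multiple of $\stm{\cdot}{us_\beta}$ with $us_\beta>u$; therefore $\stm{\na}{z}$ equals $\stm{\na_+}{z}$ plus a combination of $\stm{\na_+}{v}$ with $v>z$, and $z'=zs_\alpha$ cannot be reached from $z$ by a length-increasing chain of such reflections (the one necessary length-increasing step would have to be $z\mapsto zs_\alpha$, needing $s_\beta=s_\alpha$). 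Hence $\stm{\na_+}{z'}$ never occurs, the remaining $\stm{\na_+}{v}$ with $z<v\ne z'$ restrict to $0$ at $z'$, and $\stm{\na}{z}|_{z'}=\stm{\na_+}{z}|_{z'}$, whose value is in \cite{SZZ} (e.g.\ from \eqref{eq:recursion}). Consequently the $2\times2$ block coincides with that of $\na_+$, where Theorem~\ref{lem:actiononstable} and $D_\alpha=-T_\alpha-1$ give $D_\alpha(\stm{\na_+}{z})=-\stm{\na_+}{z}-q^{1/2}\stm{\na_+}{z'}$ and $D_\alpha(\stm{\na_+}{z'})=-q^{1/2}\stm{\na_+}{z}-q\,\stm{\na_+}{z'}$. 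Reading this off as $D_\alpha(\stm{\na}{y})$ with $z=y$ (case $ys_\alpha>y$) or $z'=y$ (case $ys_\alpha<y$) produces the stated leading terms $-\stm{\na}{y}$ resp. $-q\,\stm{\na}{y}$, together with $-q^{1/2}\stm{\na}{ys_\alpha}$, which finishes \eqref{eq:D.gen.p}.

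The hard part will be Step~1: extracting the correct triangular shape from the geometry of $D_\alpha$ and, via Lemmas~\ref{lem:classical} and \ref{lem:wyz}, pinning down exactly which $\stm{\na}{w}$ can occur — this is precisely where clause~(3) of Lemma~\ref{lem:wyz} is needed. The "no length-increasing chain" observation in Step~2 is what lets the $\na$-dependence drop out without computing $\stm{\na}{z}|_{z'}$ by hand for each $\na$. The rest — the fixed-point formula for $D_\alpha$, the value of $\stm{\na_+}{z}|_{z'}$, and matching the scalars $-1,-q,-q^{1/2}$ — is routine bookkeeping.
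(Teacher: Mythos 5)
Your proposal is correct, and the route is genuinely different from the paper's. Both proofs share the triangularity step: expand $D_\alpha(\stm{\na}{y})$ in the stable basis, use the support condition plus the localization structure of $D_\alpha=\pi_{1*}(\pi_2^*(-)\otimes\pi_2^*\calL_\alpha)$ (or, as the paper phrases it, the pairing with $\stp{-\na}{w}$), and invoke Lemma~\ref{lem:wyz} together with Deodhar's lifting property to show the only surviving coefficients are $a_z$, $a_{z'}$ and $a_w$ with $w\not< z'$. The difference is in how the leading $2\times 2$ block is pinned down. The paper (Steps 4--10) computes $a_y$ and $a_{ys_\alpha}$ directly, choosing a generic cocharacter $\xi$ and a parameter $\lambda\in\na$ close to the wall, then using the degree condition to bound $\Max_\xi$ and $\Min_\xi$ of the relevant Laurent polynomials and taking limits $t\to\pm\infty$ to conclude each $a_w$ is the expected constant; the alcove hypothesis $(\lambda,\alpha^\vee)\gtrless 0$ enters through these Newton-polygon estimates. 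You instead reduce to the observation that the block is determined by the single off-diagonal restriction $\stm{\na}{z}|_{z'}$ (plus the $\na$-independent normalizations $\stm{\na}{z}|_z$, $\stm{\na}{z'}|_{z'}$ and the support vanishing $\stm{\na}{z'}|_z=0$), show via a gallery inside the slab $0<(\lambda,\alpha^\vee)<1$ and Lemma~\ref{lem:coefficients} that this quantity is constant on the slab, and then identify the block with the known $\na_+$ case from Theorem~\ref{lem:actiononstable}; here the alcove hypothesis only serves to place $\na$ in the slab. What your argument buys is avoiding the asymptotic estimates entirely, and it also exposes a cleaner structural reason the coefficients come out $\na$-independent; the price is that you bootstrap from the computation of $D_\alpha$ on the fundamental-alcove stable basis, whereas the paper's proof is self-contained (and would apply verbatim to $\na_+$ without prior knowledge of Theorem~\ref{lem:actiononstable}). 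The length-graded argument showing $z'=zs_\alpha$ cannot appear in an increasing chain of non-$\alpha$ reflections from $z$ is sound, and is the key point making the gallery argument work; it is worth spelling out explicitly that every alcove in the slab can be joined to $\na_+$ by a gallery contained in the slab (convexity of the slab plus genericity), and that the hyperplanes $H_{\alpha^\vee,m}$ miss the slab entirely for all integers $m$.
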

\begin{proof}
	The proof is similar to that for \cite[Proposition 4.3]{SZZ}.\\
	{\it Step 1}. 
	By duality \eqref{eq:dual}, 
	\begin{align*}
	D_\alpha(\stm{\na}{y})
	=\sum_{w\in W}a_w\stm{\na}{w}, \text{with }a_w=\langle D_\alpha(\stm{\na}{y}), \stp{-\na}{w}\rangle. 
	\end{align*}
	By the support condition of the stable basis, $a_w=\langle D_\alpha(\stm{\na}{y}), \stp{-\na}{w}\rangle$ is a proper integral. Hence $a_w\in  K_T(\pt)$. 
	By localization formula and the support condition (see the first several lines in the proof of \cite[Proposition 4.3]{SZZ}), 
	\begin{align}\label{eq:ay}
	a_w=\sum_{y\leq z\leq w}\frac{\stp{-\na}{w}|_z\stm{\na}{y}|_z}{\bigwedge^\bullet T_z(T^*\calB)}\frac{e^{z\alpha}-q}{1-e^{z\alpha}}
	+\sum_{w\geq zs_\alpha\text{ and } y\leq z}\frac{\stp{-\na}{w}|_{zs_\alpha}\stm{\na}{y}|_z}{\bigwedge^\bullet T_z(T^*\calB)}\frac{1-qe^{-z\alpha}}{1-e^{-z\alpha}}e^{z\alpha}.
	\end{align}
	
	From Lemma \ref{lem:wyz} above, we know that if $ys_\al<y$, $w<y$ and $w\not\in \{y, ys_\al\}$, or if $ys_\al>y$, $w<ys_\al$ and $w\not\in \{y, ys_\al\}$, then the two sums are empty, so $a_w=0$. Therefore, we just need to consider $a_y$ and $a_{ys_\al}$. 
	
	In the following Steps 2-5, we assume $ys_\al<y$, and starting from Step 6, we assume $ys_\al>y$. 
	
	\textit{Step 2}. We compute $a_{ys_\al}$, assuming   $ys_\al<y$. By Lemma \ref{lem:wyz},  there is only one  term in the second sum with $z=y$, then 
	\begin{align*}a_{ys_\alpha}&=\frac{\stp{-\na}{ys_\alpha}|_{ys_\alpha}\stm{\na}{y}|_y}{\bigwedge^\bullet T_y(T^*\calB)}\frac{1-qe^{-y\alpha}}{1-e^{-y\alpha}}e^{y\alpha}\\
	&\overset{\text{Lem. \ref{lem:dualequal}}}=\frac{\stp{-\na}{ys_\al}|_{ys_\al}}{\stp{-\na}{y}|_y}\frac{1-qe^{-y\al}}{1-e^{-y\al}}e^{y\al}\\
	&\overset{\eqref{eq:normp}}=\frac{q^{-\ell(ys_\al)/2}\prod_{\be>0, ys_\al\be<0}(q-e^{ys_\al\be})\prod_{\be>0, ys_\al\be>0}(1-e^{ys_\al \be})}{q^{-\ell(y)/2}\prod_{\be>0, y\be<0}(q-e^{y\be})\prod_{\be>0, y\be>0}(1-e^{y\be})} \frac{1-qe^{-y\al}}{1-e^{-y\al}}e^{y\al}\\
	&\overset{\sharp}=q^{1/2}\frac{1-e^{-y\al}}{q-e^{y\al}}\frac{1-qe^{-y\al}}{1-e^{-y\al}}e^{y\al}
	\\
	&=-q^{{1/2}},
	\end{align*}
	where  $\sharp$ follows from the facts $y\alpha<0$ and $s_\alpha\{\beta>0|\beta\neq \alpha\}=\{\beta>0|\beta\neq \alpha\}$,  since $\alpha$ is simple.

	{\it Step 3}. We consider $a_y$, assuming $ys_\al<y$. By Lemma \ref{lem:wyz} above, there is only one term in each sum with both $z=y$, and we have
	\begin{align*}
	a_y&=\frac{\stp{-\na}{y}|_y\stm{\na}{y}|_y}{\bigwedge^\bullet T_y(T^*\calB)}\frac{e^{y\alpha}-q}{1-e^{y\alpha}}
	+\frac{\stp{-\na}{y}|_{ys_\alpha}\stm{\na}{y}|_y}{\bigwedge^\bullet T_y(T^*\calB)}\frac{1-qe^{-y\alpha}}{1-e^{-y\alpha}}e^{y\alpha}\\
	&\overset{\text{Lem. \ref{lem:dualequal}}}=\frac{e^{y\alpha}-q}{1-e^{y\alpha}}+\frac{\stp{-\na}{y}|_{ys_\alpha}\stm{\na}{y}|_y}{\bigwedge^\bullet T_y(T^*\calB)}\frac{1-qe^{-y\alpha}}{1-e^{-y\alpha}}e^{y\alpha}\\
	&=\frac{e^{y\al}-q}{1-e^{y\al}}(1-\frac{\stp{-\na}{y}|_{ys_\alpha}\stm{\na}{y}|_y\cdot e^{y\al}}{\bigwedge^\bullet T_y(T^*\calB)})
	\end{align*}

	{\it Step 4}.  In this step we compute $a_y$, assuming $(\la, \al^\vee)>0, \la\in \na$. For any Laurent polynomial $g=\sum_{\mu\in I}c_\mu e^\mu$ and $\xi\in \fh$, denote
	\[
	g(\xi)=\sum_{\mu\in I}c_\mu e^{(\mu, \xi)}, \quad 
	\Max_\xi(g)=\max_{\mu\in I}\{(\mu, \xi)\}, \quad \Min_\xi(g)=\min_{\mu\in I}\{(\mu, \xi)\}. 
	\] 
	For example, for any $y\in W$, 
	\begin{equation}\label{eq:rhopair} 
	\Max_\xi(\bigwedge\nolimits^\bullet T_y(T^*\calB))=-\Min_\xi(\bigwedge\nolimits^\bullet T_y(T^*\calB))=\left\{\begin{array}{cc}(2\rho, \xi), & \text{ if }\xi\in \fC_+;\\
	-(2\rho, \xi), & \text{ if }\xi\in \fC_-. \end{array}\right.
	\end{equation}
	
	Denote
	\[
	f=\stp{-\na}{y}|_{ys_\al} \stm{\na}{y}|_y \cdot e^{y\al}, \text{ and } K_T(pt)\ni a_y=\sum_{\mu\in I}c_\mu e^\mu, c_\mu\in \bbZ[q^{1/2}, q^{-1/2}].\]
	Here $I\subset X^*(T)$ is a finite subset. Following  \cite[Lemma 3.1]{SZZ}, let $\xi$ be in $\fC_+$ (so that $(2\rho, \xi)>0$) such that 
	\begin{equation}\label{eq:xi}(\mu, \xi)\in \bbZ\backslash\{0\}, \quad (\mu, \xi)\neq (\mu', \xi), \quad \forall\mu\neq  \mu'\in I. 
	\end{equation}
	Since $ys_\al<y$, $y\al<0$, so $(y\al, \xi)<0$. Since $(\la, \al^\vee)>0$, we adjust $\la\in \na$ close to $H_{\al^\vee, 0}$ so that 
	\[
	-1<(y\la-ys_\al\la,  \xi)=(\la, \al^\vee)(y\al, \xi)<0. 
	\]
	The degree condition together with \cite[(9)]{SZZ}  imply that 
	\begin{align*}
	\Max_\xi(f)&\le \Max_\xi(\stp{-\na}{ys_\al}|_{ys_\al})+( \calL_{-\la}|_{ys_\al}-\calL_{-\la}|_y, \xi)+\Max_\xi(\stm{\na}{y}|_y)+(y\al, \xi)\\
	&=(2\rho, \xi)+(\la, \al^\vee)(y\al, \xi). 
	\end{align*}
	Since $\Max_\xi(f)\in \bbZ$,   $\Max_\xi(f)\le (2\rho, \xi)-1$. 
	Therefore, by using \eqref{eq:rhopair}, we have
	\[\underset{t\to \infty}\lim \frac{f(t\xi)}{\bigwedge^\bullet T_y(T^*\calB)(t\xi)}=0. \] So 
	\[
	\underset{t\to \infty}\lim a_y(t\xi)= \underset{t\to \infty}\lim \left(\frac{e^{(y\al, t\xi)}-q}{1-e^{(y\al, t\xi)}}(1-\frac{f(t\xi)}{\bigwedge^\bullet T_y(T^*\calB)(t\xi)})\right)=\frac{0-q}{1-0}\cdot (1-0)=-q. 
	\]
	
	Similarly, we have \cite[(10)]{SZZ}
	\begin{align*}
	\Min _\xi(f)&\ge \Min_\xi(\stp{-\na}{ys_\al}|_{ys_\al})+(\calL_{-\la}|_{ys_\al}-\calL_{-\la}|_{y}, \xi)+\Min_\xi(\stm{\na}{y}|_y)+(y\al, \xi)\\
	&=-(2\rho, \xi)+(\la, \al^\vee)(y\al, \xi).
	\end{align*}
	Since $\Min_\xi(f_1)\in \bbZ$, so $\Min_\xi(f_1)\ge -(2\rho, \xi)$. From \eqref{eq:rhopair}, we get
	\[
	\underset{t\to -\infty}\lim\frac{f(t\xi)}{\bigwedge^\bullet T_y(T^*\calB)(t\xi)} \text{ is bounded}. 
	\]
	Therefore, 
	\[
	\underset{t\to -\infty}\lim a_y(t\xi)= \underset{t\to -\infty}\lim\left(\frac{e^{(y\al, t\xi)}-q}{1-e^{(y\al, t\xi)}}(1-\frac{f(t\xi)}{\bigwedge^\bullet T_y(T^*\calB)(t\xi)})\right), 
	\]
	which is bounded. Since $a_y\in K_T(\pt)$,  $a_y(t\xi)$ is a Laurent polynomial in variable $e^t$. The above two limits show that  $a_y=-q$  is a constant Laurent polynomial. 
	
	{\it Step 5}. In this step we compute $a_y$ assuming   $(\la, \al^\vee)<0, \la\in \na$. We can pick $\xi\in \fC_-$ (so that $(2\rho, \xi)<0$) satisfying  \eqref{eq:xi}. In this case,  $(y\al, \xi)>0$, and we adjust $\la\in \na$ close to $H_{\al^\vee, 0}$ so that 
	\[
	-1<(y\la-ys_\al\la, \xi)=(\la, \al^\vee)(y\al, \xi)<0. 
	\]
	Then similarly as in Step 4, we can show that
	\[
	\underset{t\to \infty}\lim a_y(t\xi)=\underset{t\to \infty}\lim \left(\frac{e^{(y\al, t\xi)}-q}{1-e^{(y\al, t\xi)}}(1-\frac{f(t\xi)}{\bigwedge^\bullet T_y(T^*\calB)(t\xi)})\right)=-1,
	\]
	and  $\underset{t\to -\infty}\lim a_y(t\xi)$ is bounded. Hence, $a_y=-1$.

	{\it Step 6}. We now consider the case  $ys_\al>y$. 
	For $a_y$ with $ys_\al>y$, then in \eqref{eq:ay}, $z=y$ in the first sum and $z=ys_\al$ in the second, so 
	\begin{align*}
	a_y&=\frac{\stp{-\na}{y}|_y\stm{\na}{y}|_y}{\bigwedge^\bullet T_y(T^*\calB)}\frac{e^{y\al}-q}{1-e^{y\al}}+\frac{\stp{-\na}{y}|_{y}\stm{\na}{y}|_{ys_\al}}{\bigwedge^\bullet T_{ys_\al}(T^*\calB)}\frac{1-qe^{y\al}}{1-e^{y\al}}e^{-y\al}\\
	&=\frac{e^{y\al}-q}{1-e^{y\al}}+\frac{\stp{-\na}{y}|_{y}\stm{\na}{y}|_{ys_\al}}{\bigwedge^\bullet T_{ys_\al}(T^*\calB)}\frac{1-qe^{y\al}}{1-e^{y\al}}e^{-y\al}. 
	\end{align*}
	Now let $k=\stp{-\na}{y}|_{y}\stm{\na}{y}|_{ys_\al}e^{-y\al}$. 
	
	{\it Step 7}. Suppose $(\la, \al^\vee)>0, \forall \la\in \na$. We choose $\xi\in \fC_+$ (so that $(2\rho, \xi)>0$ and $(y\al, \xi)>0$) satisfying properties similar as \eqref{eq:xi}. We pick $\la\in \na$,  so that 
	\[
	-1<-(\la, \al^\vee)(y\al, \xi)<0.
	\]
	Then 
	\[
	\Max_\xi(k)\le \Max_\xi(\stp{-\na}{y}|_y)+\Max_\xi(\stm{\na}{ys_\al}|_{ys_\al})+(\calL_\la|_{ys_\al}-\calL_\la|_y, \xi)-(y\al, \xi)=(2\rho, \xi)-(\la, \al^\vee)(y\al, \xi). 
	\]
	So 
	\[
	\underset{t\to \infty}\lim a_y(t\xi)=-1. 
	\]
	On the other hand, we have
	\[
	\Min_\xi(k)\ge \Min_\xi(\stp{-\na}{y}|_y)+\Min_\xi(\stm{\na}{ys_\al}|_{ys_\al})+(\calL_\la|_{ys_\al}-\calL_\la|_y, \xi)-(y\al, \xi)\ge (-2\rho, \xi)-(\la, \al^\vee)(y\al, \xi).
	\]
	Since $\Min_\xi(k)\in \bbZ$, so $\Min_\xi(k)\ge (-2\rho, \xi)$, therefore, 
	\[
	\underset{t\to -\infty}\lim a_y(t\xi) \text{ is bounded}. 
	\]
	So $a_y=-1$. 
	
	\textit{Step 8}. If $(\la, \al^\vee)<0, \forall \la\in \na$, then we can mimic Step 7 by picking $\xi\in \fC_-$ (so that $(2\rho, \xi)<0$ and $(y\al, \xi)<0$), and pick $\la\in \na$ so that $-1<-(\la, \al^\vee)(y\al, \xi)<0$, then 
	\[
	\underset{t\to \infty}\lim a_y(t\xi)=-q, \quad \text{ and }\underset{t\to -\infty}\lim a_y(t\xi) \text{ is bounded. }
	\]
	So $a_y=-q.$
	
	\textit{Step 9.}
	We then compute $a_{ys_\al}$ assuming $ys_\al>y$. In \eqref{eq:ay}, $z=y$ and $z=ys_\al$ in both sums, so we have 
	\begin{align*}
	a_{ys_\al}&=\frac{\stp{-\na}{ys_\al}|_y\stm{\na}{y}|_y}{\bigwedge^\bullet T_y(T^*\calB)}\frac{e^{y\al}-q}{1-e^{y\al}}+\frac{\stp{-\na}{ys_\al}|_{ys_\al}\stm{\na}{y}|_{ys_\al}}{\bigwedge^\bullet T_{ys_\al}(T^*\calB)}\frac{e^{-y\al}-q}{1-e^{-y\al}}\\
	&+\frac{\stp{-\na}{ys_\al}|_{ys_\al}\stm{\na}{y}|_{y}}{\bigwedge^\bullet T_{y}(T^*\calB)}\frac{1-qe^{-y\al}}{1-e^{-y\al}}e^{y\al}+\frac{\stp{-\na}{ys_\al}|_{y}\stm{\na}{y}|_{ys_\al}}{\bigwedge^\bullet T_{ys_\al}T^*\calB}\frac{1-qe^{y\al}}{1-e^{y\al}}e^{-y\al}. 
	\end{align*}
	Denote 
	\begin{align*}
	f_1=\stp{-\na}{ys_\al}|_y\stm{\na}{y}|_y, \quad f_2=\stp{-\na}{ys_\al}|_{ys_\al}\stm{\na}{y}|_{ys_\al}, \\
	f_3=\stp{-\na}{ys_\al}|_{ys_\al}\stm{\na}{y}|_{y}e^{y\al}, \quad f_4=\stp{-\na}{ys_\al}|_{y}\stm{\na}{y}|_{ys_\al}e^{-y\al}. 
	\end{align*}
	\textit{Step 10}. If $(\la, \al^\vee)>0, \forall\la\in \na$, then we can choose $\xi\in \fC_+$ satisfying  \eqref{eq:xi} and  $(y\al, \xi)>0$, and pick $\la\in \na$ satisfying $-1<-2(\la, \al^\vee)(y\al, \xi)<0$, then 
	\[
	\underset{t\to \infty}\lim f_1(t\xi)=\underset{t\to \infty}\lim f_2(t\xi)=\underset{t\to \infty}\lim f_4(t\xi)=0,
	\]
	and it is easy to calculate from Equations \eqref{eq:normm} and \eqref{eq:normp} that
	\[
	\frac{f_3}{\bigwedge^\bullet T_y(T^*\calB)}=q^{-1/2}\frac{q-e^{-y\al}}{1-e^{y\al}}. 
	\]
	Therefore, 
	\[
	\underset{t\to \infty}\lim a_{ys_\al}(t\xi)=-q^{1/2}, \quad \underset{t\to -\infty}\lim a_{ys_\al}(t\xi) \text{ is bounded. }
	\]
	
	Similarly, if $(\la, \al^\vee)<0, \forall \la\in \na$, then we can choose $\xi\in \fC_-$  (so that $(y\al, \xi)<0$), and $\la\in \na$ satisfying  $-1<-2(\la, \al^\vee)(y\al, \xi)<0$, then 
	\[
	\underset{t\to \infty}\lim a_{ys_\al}(t\xi)=-q^{1/2}, \quad \underset{t\to -\infty}\lim a_{ys_\al}(t\xi) \text{ is bounded. }
	\]
	Therefore, $a_{ys_\al}=-q^{1/2}$. 
\end{proof}

\subsection{Arbitrary Chamber}

We generalize Theorem \ref{thm:simplewall} to the setting of an arbitrary chamber. 
The choice of a chamber $\fC$ determines a set of simple roots. Replacing the positive chamber $+$ in Theorem \ref{thm:simplewall} by the chamber $\fC$, we get the following result.

\begin{thm}\label{thm:simplewall.gen}
	Let $\alpha^\vee$ be a simple coroot determined by the chamber $\fC$. Suppose $\na$ has a wall on $H_{\al^\vee, 0}$, and $(\la, \al^\vee)>0, \forall \la\in \na$. Then
	\begin{align*}
	\stab^{\fC, T\calB, s_\al\na}_{y}=&
	\left\{ \begin{array}{ll}
	\stab^{\fC, T\calB, \na}_{y}+(q^{{1/2}}-q^{-{1/2}})\cdot\stab^{\fC, T\calB, \na}_{ys_{\alpha}},  & \textit{ if\quad } ys_{\alpha}\prec_\fC y;\\
	\stab^{\fC, T\calB,\na}_{y},  & \text{ if\quad } ys_{\alpha}\succ_\fC y.
	\end{array}\right. \\
	\stab^{-\fC, T^*\calB, s_\al\na}_{y}=&
	\left\{ \begin{array}{ll}
	\stab^{-\fC, T^*\calB, \na}_{y}+(q^{{1/2}}-q^{-{1/2}})\cdot\stab^{-\fC, T^*\calB, \na}_{ys_{\alpha}}, & \textit{ if\quad } ys_{\alpha}\succ_\fC y;\\
	\stab^{-\fC, T^*\calB, \na}_{y}, & \text{ if\quad } ys_{\alpha}\prec_\fC y.
	\end{array}\right.
	\end{align*}
\end{thm}
Thanks to this result, in the rest of the paper, we focus on stable bases associated to the positive/negative chamber.

\section{Wall-crossings and the affine Hecke algebra action}
In this section we compute the formula when crossing a wall of a non-simple root, and also study the relation between wall crossing and the affine Hecke algebra action.
\subsection{Crossing the non-simple walls}\label{sec:non-simple}

\begin{thm}\label{thm:nonsimplewall}
	Suppose $\nabla$ has a wall on $H_{\alpha^\vee, 0}$ (so that $\na$ and $s_\al\na$ share a wall on  $H_{\alpha^\vee, 0}$), and    $(\lambda,\alpha^\vee)>0, \forall \la\in \na$.   
	Then 
	\begin{align*}
	\stab^{\fC, T\calB,s_\al\na}_{y}=&
	\left\{ \begin{array}{ll}
	\stab^{\fC, T\calB,\na}_{y}+(q^{{1/2}}-q^{-{1/2}})\cdot\stab^{\fC, T\calB, \na}_{ys_{\alpha}},  & \textit{ if\quad } ys_{\alpha}\prec_\fC y;\\
	\stab^{\fC, T\calB, \na}_{y},  & \text{ if\quad } ys_{\alpha}\succ_\fC y.
	\end{array}\right. \\ 
	\stab^{-\fC, T^*\calB, s_\al\na}_{y}=&
	\left\{ \begin{array}{ll}
	\stab^{-\fC, T^*\calB, \na}_{y}+(q^{{1/2}}-q^{-{1/2}})\cdot\stab^{-\fC, T^*\calB, \na}_{ys_{\alpha}}, & \textit{ if\quad } ys_{\alpha}\succ_\fC y;\\
	\stab^{-\fC, T^*\calB, \na}_{y}, & \text{ if\quad } ys_{\alpha}\prec_\fC y.
	\end{array}\right.
	\end{align*}
\end{thm}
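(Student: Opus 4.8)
The plan is to reduce the non-simple wall to the simple case handled in Theorem~\ref{thm:simplewall.gen}, by conjugating with the Weyl group action on $K_T(T^*\calB)$ from \S\ref{subsec:WeylAction}. Since $\alpha$ is a positive root with respect to $\fC$, it is $W$-conjugate to a simple root: write $\alpha=w\beta$ with $w\in W$ and $\beta$ a simple root determined by $\fC$. Applying $w$ carries the simple system of $\fC$ to that of $w\fC$, so $\alpha=w\beta$ is a \emph{simple} root determined by the chamber $w\fC$, with simple reflection $s_\alpha=w s_\beta w^{-1}$. The standing hypotheses — that $\nabla$ has a wall on $H_{\alpha^\vee,0}$ and $(\lambda,\alpha^\vee)>0$ for all $\lambda\in\nabla$ — are then precisely the hypotheses of Theorem~\ref{thm:simplewall.gen} applied to the chamber $w\fC$ and the simple root $\alpha$.

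First I would record how the relevant data behave under the Weyl action. Because $T\calB$ and $T^*\calB$ are pullbacks along $T^*\calB\to\calB$ of $G$-equivariant bundles (and the $\bbC^*$-action commutes with $G$), one has $w(T\calB)=T\calB$, $w(T^*\calB)=T^*\calB$ in $K_T(T^*\calB)$, and $w$ fixes $q$; likewise $w(\calL_\lambda)=\calL_\lambda$, which is why the alcove superscript is inert under the action. Combined with Lemma~\ref{lem:Weyl action}, this gives $w(\stab^{\fC,T\calB,\nabla'}_{u})=\stab^{w\fC,T\calB,\nabla'}_{wu}$ and $w(\stab^{-\fC,T^*\calB,\nabla'}_{u})=\stab^{-w\fC,T^*\calB,\nabla'}_{wu}$ for every $u\in W$ and every alcove $\nabla'$, in particular for $\nabla'=\nabla$ and $\nabla'=s_\alpha\nabla$. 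Next I would apply Theorem~\ref{thm:simplewall.gen} to $(w\fC,\alpha,\nabla)$ and push the two resulting identities back through the ring automorphism $w^{-1}$ of $K_T(T^*\calB)$. Using the displayed relations with $u=y$ and $u=ys_\alpha$ (noting $w^{-1}(wy\,s_\alpha)=ys_\alpha$, products taken in $W$), together with the order compatibility $wy s_\alpha\prec_{w\fC} wy\iff ys_\alpha\prec_{\fC} y$ coming from $\Attr_{w\fC}(wz)=w\,\Attr_\fC(z)$ (cf.\ \eqref{eq:wactionattr}), the two formulas of Theorem~\ref{thm:simplewall.gen} transform termwise into the two asserted formulas, with the coefficient $q^{1/2}-q^{-1/2}$ preserved since $w^{-1}$ fixes $q$.

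I do not expect a genuinely hard step: the whole content is already in the simple case, and the argument here is just transport of structure. The only thing to be careful about is the bookkeeping — tracking that the Weyl action moves the chamber (from $\fC$ to $w\fC$) while fixing the polarization class and the alcove, and checking that the conjugate $w\beta=\alpha$ is truly a simple root for $w\fC$ so that Theorem~\ref{thm:simplewall.gen} applies verbatim. Finally, since the right-hand sides of the resulting formulas are manifestly independent of which wall of $\nabla$ on $H_{\alpha^\vee,0}$ one crosses, this also yields the asserted independence of the wall-crossing coefficient from the choice of wall.
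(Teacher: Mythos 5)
Your proof is correct and is essentially the paper's proof, with a cosmetic reversal: the paper starts from the wall-crossing expansion with an undetermined coefficient $f\in K_{\bbC^*}(\pt)$ (invariant under the $W$-action by Lemma~\ref{lem:coefficients}), conjugates by $w$, and pins down $f$ by matching against Theorem~\ref{thm:simplewall.gen}; you instead start from Theorem~\ref{thm:simplewall.gen} in the chamber $w\fC$ and conjugate by $w^{-1}$, which yields the same computation and also fuses the paper's two passes ($\fC_+$, then general $\fC$) into one. The only minor inaccuracy is your parenthetical that $\alpha$ is positive with respect to $\fC$, which is neither assumed in the statement nor needed, since any root is $W$-conjugate to a simple root regardless of sign.
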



\begin{rem}
	This theorem shows that the wall crossing coefficients $f_y^{s_\al\na\lefta\na}$ does not depend on where we cross the root hyperplane $H_{\alpha^\vee, 0}$, which is not obvious from the definitions. In other words, for any $\na$ having a wall on $H_{\al^\vee,0}$, the coefficient $f_y^{s_\al\na\lefta\na}$ does not depend on the choice of $\na$.
\end{rem}

\begin{proof}
	It suffices to prove the first one, since the other one will follow from it via duality as in Theorem \ref{thm:simplewall}.
	Assume first that $\fC=\fC_+$, then we just need to consider the case when $ys_\al<y$ due to Lemma \ref{lem:coefficients}. We have
	\[\stab^{\fC_+, T\calB, s_\al\na}_{y}=\stab^{\fC_+, T\calB, \na}_{y}+f_y^{s_\al\na\lefta\na}\cdot\stab^{\fC_+, T\calB, \na}_{ys_{\alpha}},\]
	where $f^{s_\al\na\lefta\na}\in K_{\bbC^*}(pt)$ due to Lemma \ref{lem:coefficients}.
	Suppose $\alpha^\vee=w\beta^\vee$ for some simple coroot $\beta^\vee$ and $w\in W$. Applying $w$ to the above identity and using Lemma \ref{lem:Weyl action}, we get
	\[\stab^{w(\fC_+),T\calB, s_\al\na}_{wy}=\stab^{w(\fC_+),T\calB, \na}_{wy}+f_y^{s_\al\na\lefta\na}\cdot\stab^{w(\fC_+),T\calB, \na}_{wys_{\alpha}}.\]
	Here we have used the fact that $w(T\calB)=T\calB$ and $w(f_y^{s_\al\na\lefta\na})=f_y^{s_\al\na\lefta\na}$. From definition, we see that $ys_\al<y$ implies that $wys_\al \prec_{w\fC_+}wy$.  Moreover, since $\be^\vee$ is a simple coroot with respect to $\fC_+$,  $\alpha^\vee=w\beta^\vee$ is a simple coroot with respect to $w\fC_+$, and $s_\al\na, \na$ are separated by the simple wall $H_{\al^\vee, 0}$. Therefore, Theorem \ref{thm:simplewall.gen} implies that $f_y^{s_\al\na\lefta \na}=q^{1/2}-q^{-1/2}$.
	
	Now if $\fC$ is general, we need to compute $f^{s_\al\na\lefta\na}_y\in K_{\bbC^*}(\pt)$ in 
	\[
	\stab^{\fC, T\calB, s_\al\na}_y=\stab^{\fC, T\calB, \na}_y+f^{s_\al\na\lefta \na}_y\stab^{\fC, T\calB, \na}_{ys_\al}.
	\]
	Write $\fC_+=v\fC$, then $ys_\al\prec_\fC y$ implies $vys_\al<vy$. Applying $v$ to the above identity, we get 
	\[
	\stab^{\fC_+, T\calB, s_\al\na}_{vy}=\stab^{\fC_+, T\calB, \na}_{vy}+f^{s_\al\na\lefta\na}_y\stab^{\fC_+, T\calB, \na}_{vys_\al}.
	\]
	By using the first part of the proof, we see that $f_{y}^{s_\al\na\lefta\na}=q^{1/2}-q^{-1/2}$. 
\end{proof}

By translation, we can compute the wall-crossing coefficients for any wall $H_{\alpha^\vee, n}$.
\begin{cor}\label{cor:general}
	Suppose $\na_1, \na_2$ are adjacent and  separated by the hyperplane $H_{\al^\vee, n}$. Moreover, assume that $(\la_1,\al^\vee)<n<(\la_2, \al^\vee), \forall \la_i\in \na_i$. Then for any chamber $\fC$,  the polarization $T\calB$  or $T^*\calB$, the coefficient $f^{\na_1\lefta \na_2}_y$ in Lemma \ref{lem:coefficients} is equal to $e^{-ny\al}(q^{1/2}-q^{-1/2})$. 
\end{cor}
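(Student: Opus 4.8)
The plan is to deduce this from the $n=0$ case, Theorem~\ref{thm:nonsimplewall}, by translating the two alcoves by a suitable integral weight and then applying the translation rule of Lemma~\ref{lem:shift}. At the outset I would restrict to the case $ys_\al\prec_\fC y$, since by Lemma~\ref{lem:coefficients} this is the only case in which $f_y^{\na_1\lefta\na_2}$ is defined; in the complementary case the two stable bases coincide and there is nothing to prove.

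First I would choose $\mu\in\Lambda$ with $(\mu,\al^\vee)=-n$, exactly as in the reduction carried out after Lemma~\ref{lem:shift}: take $\mu=-n\varpi_\al$ if $\al$ is simple, and $\mu=-n\,w(\varpi_\be)$ if $\al=w\be$ with $\be$ simple, which lies in $\Lambda$ because $G$ is simply connected (so $\Lambda$ is the full weight lattice) and $\Lambda$ is $W$-stable. Translating, $\na_1+\mu$ and $\na_2+\mu$ are adjacent alcoves separated by $H_{\al^\vee,0}$; the hypothesis $(\la_1,\al^\vee)<n<(\la_2,\al^\vee)$ becomes $(\mu',\al^\vee)>0$ for all $\mu'\in\na_2+\mu$; and since $s_\al$ is the orthogonal reflection fixing $H_{\al^\vee,0}$, we get $\na_1+\mu=s_\al(\na_2+\mu)$. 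Applying Theorem~\ref{thm:nonsimplewall} with this chamber $\fC$, either polarization, and $\na:=\na_2+\mu$ then yields
\[
f_y^{(\na_1+\mu)\lefta(\na_2+\mu)}=q^{1/2}-q^{-1/2}
\]
whenever $ys_\al\prec_\fC y$.

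Finally I would transport this back using Lemma~\ref{lem:shift}, which gives $f_y^{(\na_1+\mu)\lefta(\na_2+\mu)}=e^{-(\mu,\al^\vee)y\al}\,f_y^{\na_1\lefta\na_2}$; combined with $(\mu,\al^\vee)=-n$ this gives
\[
f_y^{\na_1\lefta\na_2}=e^{(\mu,\al^\vee)y\al}\bigl(q^{1/2}-q^{-1/2}\bigr)=e^{-ny\al}\bigl(q^{1/2}-q^{-1/2}\bigr),
\]
which is the assertion. I do not anticipate a real obstacle: all inputs are already in hand, and the only thing to watch is the bookkeeping — that $\mu$ can be taken integral, that it carries $H_{\al^\vee,n}$ onto $H_{\al^\vee,0}$ with $\na_2+\mu$ on the positive side of $\al^\vee$, and that the sign $(\mu,\al^\vee)=-n$ is propagated correctly through Lemma~\ref{lem:shift}. (If one wanted to drop the normalization $(\la_1,\al^\vee)<n<(\la_2,\al^\vee)$ one would also invoke $f_y^{\na_1\lefta\na_2}=-f_y^{\na_2\lefta\na_1}$ from Lemma~\ref{lem:coefficients}, but that is not needed here.)
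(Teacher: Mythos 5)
Your proof is correct and follows essentially the same route as the paper: translate both alcoves by an integral $\mu$ with $(\mu,\alpha^\vee)=-n$ (i.e., $\mu=-n\varpi_\alpha$, constructed via a Weyl group conjugate when $\alpha$ is not simple), apply Theorem~\ref{thm:nonsimplewall} to the $n=0$ situation, and transport back with Lemma~\ref{lem:shift}. The extra care you take in restricting to $ys_\alpha\prec_\fC y$ is a minor bookkeeping addition that does not change the argument.
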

\begin{proof} 
	There exists an integral weight $\varpi_\alpha\in \Lambda$ such that, $(\varpi_\alpha, \alpha^\vee)=1$. Indeed, if $\alpha$ is simple, take $\varpi_\alpha$ to be the corresponding fundamental weight. If $\al$ is not simple, there exists  $v\in W$ such that $v\alpha$ is simple. We can take $\varpi_\alpha=v^{-1}\varpi_{v\alpha}$, where $\varpi_{v\alpha}$ is the fundamental weight corresponding to the simple root $v\alpha$.  Then $\na_1':=\na_1-n\varpi_\alpha$ and $\na_2'=\na_2-n\varpi_\alpha$ are adjacent and  separated by $H_{\al^\vee, 0}$. Lemma \ref{lem:shift} and Theorem \ref{thm:nonsimplewall} imply that 
	\[f_y^{\na_1\leftarrow \na_2}=e^{-ny\alpha}f_y^{\na_1'\leftarrow \na_2'}=e^{-ny\al}(q^{1/2}-q^{-1/2}).\]
\end{proof}
\subsection{The affine Hecke algebra action}

We can further express  the wall-crossing formulas in terms of the affine Hecke algebra action. Denote $q_y=q^{\ell(y)}, \epsilon_y=(-1)^{\ell(y)}$.  
\begin{thm}\label{thm:wallcrossingHecke}
	For any $x, y\in W$, we have 
	\begin{eqnarray}\label{eq:Neg}
	\stm{x\na_+}{y}&=&q_x^{-1/2}T_x(\stm{\na_+}{yx});\\ 
	\stp{x\na_-}{y}&=&q_x^{1/2}(T'_{x^{-1}})^{-1}(\stp{\na_-}{yx}). \label{eq:Pos}
	\end{eqnarray}
\end{thm}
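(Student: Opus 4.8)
The plan is to prove the first identity \eqref{eq:Neg} by induction on $\ell(x)$, and then derive the second identity \eqref{eq:Pos} from it formally via the duality pairing.

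For \eqref{eq:Neg} the base case $x=e$ is trivial. For the inductive step I would fix a reduced expression of $x$ and write $x=x's_\beta$ with $\beta$ a simple root and $\ell(x)=\ell(x')+1$; the latter condition is equivalent to $x'\beta>0$, and this positivity is the crucial point. Put $\delta:=x'\beta$, a positive root, so that $s_\delta=x's_\beta x'^{-1}$ and $x\na_+=x's_\beta\na_+=s_\delta(x'\na_+)$. Since the operators $T_\al$ ($\al$ simple) satisfy the braid relations in $\bbH$, the operator $T_x$ is well defined and $T_x=T_{x'}\circ T_\beta$. First I would apply Theorem~\ref{lem:actiononstable} to $T_\beta(\stm{\na_+}{yx})$: writing $w=yx$ we have $ws_\beta=yx'$, so $T_\beta(\stm{\na_+}{yx})$ is an explicit $\bbZ[q^{1/2},q^{-1/2}]$-linear combination of $\stm{\na_+}{yx}$ and $\stm{\na_+}{yx'}$, the two cases distinguished by whether $yx'<yx$ or $yx'>yx$. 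Then I would apply $T_{x'}$; since $yx=(ys_\delta)x'$ and $yx'=(y)x'$, the induction hypothesis turns $T_{x'}(\stm{\na_+}{yx})$ into $q_{x'}^{1/2}\stm{x'\na_+}{ys_\delta}$ and $T_{x'}(\stm{\na_+}{yx'})$ into $q_{x'}^{1/2}\stm{x'\na_+}{y}$. Using $q_x^{-1/2}q_{x'}^{1/2}=q^{-1/2}$, this writes $q_x^{-1/2}T_x(\stm{\na_+}{yx})$ explicitly in terms of $\stm{x'\na_+}{y}$ and $\stm{x'\na_+}{ys_\delta}$.

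On the geometric side, $x'\na_+$ and $x\na_+=s_\delta(x'\na_+)$ are adjacent alcoves sharing a wall on $H_{\delta^\vee,0}$, and because $\delta=x'\beta>0$ one checks $(\la,\delta^\vee)>0$ for all $\la\in x'\na_+$; hence Theorem~\ref{thm:nonsimplewall} applies with $\na=x'\na_+$ and expresses $\stm{x\na_+}{y}$ in terms of $\stm{x'\na_+}{y}$ and $\stm{x'\na_+}{ys_\delta}$, again in two cases governed by whether $ys_\delta>y$ or $ys_\delta<y$. Since $ys_\delta>y\iff y\delta>0\iff (yx')\beta>0\iff yx'<yx$, the two case distinctions coincide, and a direct comparison of coefficients finishes the induction. (It is essential to remove the \emph{last} letter of the reduced word rather than the first: in general $x'\na_+$ does not share a wall with $s_\al x'\na_+$ when $\al$ is the first letter, so no single wall-crossing step is available there.)

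For \eqref{eq:Pos} I would argue formally. The pairing \eqref{eqn:pairing} is symmetric, so by the duality \eqref{eq:dual} (and $-(x\na_+)=x\na_-$) the families $\{\stm{x\na_+}{w}\}_{w\in W}$ and $\{\stp{x\na_-}{v}\}_{v\in W}$ are dual bases of the localized $K_T(T^*\calB)$, and likewise for $x=e$; moreover the adjointness \eqref{eq:adj} is equivalent to $\langle T_\al\calF,\calG\rangle=\langle\calF,T'_\al\calG\rangle$, which with the braid relations gives $\langle T_x\calF,\calG\rangle=\langle\calF,T'_{x^{-1}}\calG\rangle$. Pairing the already-proven \eqref{eq:Neg} against $\stp{x\na_-}{v}$, moving $T_x$ to the other side, and reading off coordinates in the dual basis $\{\stp{\na_-}{w}\}_w$ yields $T'_{x^{-1}}(\stp{x\na_-}{v})=q_x^{1/2}\stp{\na_-}{vx}$, which is \eqref{eq:Pos} since $T'_\al$, hence $T'_{x^{-1}}$, is invertible by its quadratic relation. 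The main obstacle in all of this is the combinatorial bookkeeping of the inductive step — correctly identifying $\delta=x'\beta$ as the root labelling the crossed wall, using $\ell(x)=\ell(x')+1$ to ensure one lands in the case covered by Theorem~\ref{thm:nonsimplewall} rather than its inverse, and verifying that the Bruhat-order case split produced by the Hecke action of $T_\beta$ agrees, scalars included, with the one produced by the wall-crossing formula.
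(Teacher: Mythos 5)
Your proposal is correct and follows essentially the same route as the paper's proof: induction on $\ell(x)$ for \eqref{eq:Neg}, with the inductive step matching the two-case formula from Theorem~\ref{lem:actiononstable} (applied to the rightmost simple letter) against the wall-crossing formula of Theorem~\ref{thm:nonsimplewall} across $H_{\delta^\vee,0}$ with $\delta=x'\beta$, followed by a formal derivation of \eqref{eq:Pos} from \eqref{eq:Neg} via the duality pairing \eqref{eq:dual} and the adjointness \eqref{eq:adj}. The only cosmetic differences are that you write the inductive step by peeling off the last letter of $x$ rather than appending one, and your direct comparison of coefficients avoids the detour through the quadratic relation for $T_\alpha$ that the paper uses in its second case.
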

\begin{rem}
	This, together with Equations \eqref{eq:extremecases}, \eqref{eq:shift}, and Theorem~\ref{lem:actiononstable} completely determines the stable basis for the positive and negative chambers. 
	The stable basis associated to other chambers are determined by \S~\ref{subsec:WeylAction}, on top of those for the positive or negative chambers.
\end{rem}

\begin{proof}
	We first show that \eqref{eq:Neg} implies \eqref{eq:Pos}. 
	By the duality of stable bases \eqref{eq:dual} and the adjointness between $T_\al$ and $T'_\al$ \eqref{eq:adj}, we get
	\begin{align*}
	q_x^{1/2}(T_{x^{-1}}')^{-1}(\stp{\na_-}{yx})&=\sum_{v\in W}\langle 
	q_x^{1/2}(T_{x^{-1}}')^{-1}(\stp{\na_-}{yx}), \stm{x\na_+}{v}\rangle \stp{x\na_-}{v}\\
	&=\sum_{v\in W} \langle \stp{\na_-}{yx}, q_x^{1/2}T_{x}^{-1}(\stm{x\na_+}{v})\rangle\stp{x\na_-}{v}\\
	&\overset{\eqref{eq:Neg}}=\sum_{v\in W}\langle \stp{\na_-}{yx}, \stm{\na_+}{vx}\rangle\stp{x\na_-}{v}\\
	&=\stp{x\na_-}{y}.
	\end{align*}
	
	Now we prove \eqref{eq:Neg} by induction on  $\ell(x)$. If $x$ is a simple reflection, it follows from \eqref{eq:heckewallm} and \eqref{eq:Taction}. Now assume it holds for $x$. Let $\alpha$ be a simple root such that $xs_\alpha>x$. Then $\beta:=x\alpha>0$ and $s_\be=x s_\al x^{-1}$. Notice that the two alcoves $x\na_+$ and $xs_\alpha\na_+=s_{\be}x\na_+$ are adjacent alcoves separated by a wall on $H_{\beta^\vee,0}$, and for any $\la\in \na_+$, 
	\[(xs_\alpha\lambda,\beta^\vee)=(\lambda,s_\alpha x^{-1}\beta^\vee)=-(\lambda,\alpha^\vee)<0<(x\lambda,\beta^\vee).\]
	So we can use Theorem \ref{thm:nonsimplewall} with respect to the wall crossing $xs_\al\na_+\lefta x\na_+$, which corresponds to $s_\be x\na_+\lefta x\na_+. $
	
	If  $y\beta<0$, then $ys_\be<y$, and $ys_\be\succ_{\fC_-} y$. Since $yx \al=y\be<0$, $yxs_\al\succ_{\fC_-}yx$. Then 
	\begin{align*}
	\stm{xs_\al\na_+}{y}&\overset{\text{Thm.} \ref{thm:nonsimplewall}}=\stm{x\na_+}{y}\\
	&\overset{\text{induction}}=q_x^{-1/2}T_x(\stm{\na_+}{yx})\\
	&\overset{\text{Thm.}\ref{lem:actiononstable}}=q_x^{-1/2}T_x\left(q^{-1/2}T_\al(\stm{\na_+}{yxs_\al})\right)\\
	&=q_{xs_\al}^{-1/2}T_{x s_\al}(\stm{\na_+}{yxs_\al}).
	\end{align*}
	
	If $y\be>0$, then $ys_\be\prec_{\fC_-}y$, and $yxs_\al\prec_{\fC_-}yx$. We have
	\begin{align*}
	q_{xs_\al}^{-1/2}T_{xs_\al}(\stm{\na_+}{yxs_\al})&=q_{xs_\al}^{-1/2}T_xT_\al(\stm{\na_+}{yxs_\al})\\
	&\overset{\text{Thm.}\ref{lem:actiononstable}}=q_{xs_\al}^{-1/2}T_xT_\al[q^{-1/2}T_\al(\stm{\na_+}{yx})]\\
	&=q_{xs_\al}^{-1/2}T_x\left (q^{1/2}+(q^{1/2}-q^{-1/2})T_\al \right)(\stm{\na_+}{yx})\\
	&\overset{\text{Thm.}\ref{lem:actiononstable}}=q_{xs_\al}^{-1/2}T_x\left( q^{1/2}\stm{\na_+}{yx}+(q^{1/2}-q^{-1/2})q^{1/2}\stm{\na_+}{yxs_\al}\right)\\
	&=q_x^{-1/2}\left( T_x(\stm{\na_+}{yx})+(q^{1/2}-q^{-1/2})T_x(\stm{\na_+}{yxs_\al})\right)\\
	&\overset{\text{induction}}=\stm{x\na_+}{y}+(q^{{1/2}}-q^{-{1/2}})\stm{x\na_+}{yxs_\alpha x^{-1}}\\
	&\overset{\text{Thm.}\ref{thm:nonsimplewall}}=\stm{xs_\alpha\na_+}{y}.
	\end{align*}
\end{proof}

\begin{cor}\label{cor:TheOne}
If $\al$ is a simple root and $s_\al x>x$, then we have 
	\[
	q^{-1/2}T_\al(\stm{x\na_+}{y})=\stm{s_\al x\na_+} {ys_\al}, \quad q^{-1/2}T'_\al(\stp{s_\al x\na_-}{ys_\al})=\stp{ x\na_-}{y}. 
	\]
\end{cor}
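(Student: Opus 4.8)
The plan is to deduce both identities directly from Theorem~\ref{thm:wallcrossingHecke}, using nothing more than the multiplicativity of the Hecke operators along reduced words together with the hypothesis $s_\alpha x > x$.

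First I would treat the negative-chamber identity. Since $s_\alpha x > x$ we have $\ell(s_\alpha x) = \ell(x)+1$, hence $T_{s_\alpha x} = T_\alpha T_x$ in $\bbH$ and $q_{s_\alpha x}^{1/2} = q^{1/2}q_x^{1/2}$. Applying $q^{-1/2}T_\alpha$ to the identity $\stm{x\na_+}{y} = q_x^{-1/2}T_x(\stm{\na_+}{yx})$ of Theorem~\ref{thm:wallcrossingHecke} produces $q_{s_\alpha x}^{-1/2}T_{s_\alpha x}(\stm{\na_+}{yx})$. On the other hand, applying Theorem~\ref{thm:wallcrossingHecke} to the element $s_\alpha x$ and the fixed point $ys_\alpha$, and using the identity $(ys_\alpha)(s_\alpha x) = yx$, one gets exactly $\stm{(s_\alpha x)\na_+}{ys_\alpha} = q_{s_\alpha x}^{-1/2}T_{s_\alpha x}(\stm{\na_+}{yx})$, which is the same expression. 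This gives the first equality.

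For the positive-chamber identity I would argue the mirror way, being careful about the conventions of Remark~\ref{rmk:LR_Hecke} for the right-action operators. Since $(s_\alpha x)^{-1} = x^{-1}s_\alpha$ with $\ell(x^{-1}s_\alpha) = \ell(x^{-1})+1$, concatenating a reduced word of $x^{-1}$ with $s_\alpha$ gives $T'_{x^{-1}s_\alpha} = T'_{x^{-1}}T'_\alpha$, hence $(T'_{(s_\alpha x)^{-1}})^{-1} = (T'_\alpha)^{-1}(T'_{x^{-1}})^{-1}$. Substituting this into the formula $\stp{(s_\alpha x)\na_-}{ys_\alpha} = q_{s_\alpha x}^{1/2}(T'_{(s_\alpha x)^{-1}})^{-1}(\stp{\na_-}{(ys_\alpha)(s_\alpha x)})$ of Theorem~\ref{thm:wallcrossingHecke} and using $(ys_\alpha)(s_\alpha x) = yx$ shows $\stp{(s_\alpha x)\na_-}{ys_\alpha} = q_{s_\alpha x}^{1/2}(T'_\alpha)^{-1}(T'_{x^{-1}})^{-1}(\stp{\na_-}{yx})$; applying $q^{-1/2}T'_\alpha$ then cancels the leading factor and adjusts the scalar via $q^{-1/2}q_{s_\alpha x}^{1/2} = q_x^{1/2}$, leaving $q_x^{1/2}(T'_{x^{-1}})^{-1}(\stp{\na_-}{yx}) = \stp{x\na_-}{y}$. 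Alternatively, this second identity can be extracted from the first by the same duality/adjointness manipulation (via \eqref{eq:dual} and \eqref{eq:adj}) used at the start of the proof of Theorem~\ref{thm:wallcrossingHecke}.

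There is no serious obstacle: the statement is essentially a repackaging of Theorem~\ref{thm:wallcrossingHecke}. The only point demanding attention is the reduced-word bookkeeping — verifying that the extra simple reflection $s_\alpha$ sits on the correct side in each of $T_{s_\alpha x} = T_\alpha T_x$ and $T'_{x^{-1}s_\alpha} = T'_{x^{-1}}T'_\alpha$ (which is precisely where the hypothesis $s_\alpha x > x$ is used), and matching the half-integer powers of $q$ across the two sides.
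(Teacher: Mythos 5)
Your proof is correct and takes essentially the same route as the paper: the paper also deduces both equalities directly from Theorem~\ref{thm:wallcrossingHecke} via the length-additive relation $T_{s_\alpha x} = T_\alpha T_x$ (the paper phrases the first identity as a chain of equalities through $\stm{\na_+}{y}$ and cancels $q_x^{1/2}(T_x)^{-1}$, which is the same manipulation you perform by applying $q^{-1/2}T_\alpha$, and it leaves the second identity as "proved similarly," which you spell out). Your bookkeeping of the reduced-word relation $T'_{x^{-1}s_\alpha} = T'_{x^{-1}}T'_\alpha$ and the scalar $q_{s_\alpha x}^{1/2} = q^{1/2}q_x^{1/2}$ is accurate.
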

\begin{proof}
	We have $(T_{s_\al x})^{-1}=(T_x)^{-1}T_\al^{-1}$. By Theorem \ref{thm:wallcrossingHecke}, we have 
	\begin{align*}
	q_x^{1/2}(T_x)^{-1}\stm{x\na_+}{yx^{-1}}=\stm{\na_+}{y}=q_{s_\al x}^{1/2}(T_{s_\al x})^{-1}(\stm{s_\al x\na_+}{yx^{-1}s_\al})=q_{ x}^{1/2}q^{1/2}(T_{ x})^{-1}T_\al^{-1}(\stm{s_\al x\na_+}{yx^{-1}s_\al}).
	\end{align*}
	Cancelling $q_x^{1/2}(T_x)^{-1}$, we get the first identity. The second one can be proved similarly. 
\end{proof}

\section{Affine braid group action and categorification of stable basis}
In this section we give a quick review of the affine braid group action on $D^{\ob}_{T}(T^*\calB)$, following \cite{BR,BMR2}, and then use it to define an integral lift of the stable basis $\{\stab^{+,T\calB,\nabla}_{w}\mid w\in W\}$. 

\subsection{The affine braid group}\label{sec:affWeyl}
We first recall the level-$p$ affine Weyl group action. 
In $\Lambda$, we consider the level-$p$ configuration of $\rho$-shifted affine hyperplanes. That is, 
for  a coroot $\alpha^\vee$ and $n\in\ZZ$, let the hyperplanes $H^p_{\alpha^\vee,n}$ be $\{\lambda\in\Lambda\mid\angl{\alpha^\vee,\lambda+\rho}= np\}$. The open facets are called $p$-alcoves, and the codimension one facets are called faces. 

There is a special alcove,  denoted by $A_0$, which is the alcove containing $(\epsilon-1)\rho$
for small $\epsilon>0$. It consists of those weights $\lambda$ such that $0< \angl{\lambda+\rho,\alpha^\vee}<p$ for all $\alpha\in\Phi^+$. 
This alcove is referred  as the fundamental alcove in \cite{BMR2}, which, to avoid confusion, will not be called the fundamental alcove in the present paper.

Let $Q$ be the root lattice.
We consider the level-$p$ affine Weyl group. It is the usual affine Weyl group $W_{\aff}:= W\ltimes Q$ as an abstract group, with a different action on the lattice $\Lambda$,  the level-$p$ dot-action, recalled as follows. Elements in $W$ acts via the usual dot-action $w:\la\mapsto w\bullet \la:=w(\la+\rho)-\rho$. Elements $\nu$ in the lattice act by $\lambda\mapsto\lambda+p\nu$. The group $W_{\aff}$ is generated
by reflections in affine hyperplanes $H_{\al^\vee,n}^p$. The set of faces in the closure  of $A_0$ will be denoted by $I_{\aff}$.
The $(W_{\aff},\bullet)$-orbits in the set of faces are canonically identified with $I_{\aff}$. 
Via this identification, the (Coxeter) generators of the group $W_{\aff}$ can be chosen to be the reflections in the faces of the alcove $A_0$.

We will consider the {\it extended affine Weyl group}\footnote{Here we follow the notations of \cite{BMR2}. In \cite{BR}, the notation $W_{\aff}$ is used for the extended affine Weyl group, and $W_{\aff}^{Cox}$ represents the non-extended one. The same convention holds for the corresponding affine Braid groups.} $W_{\aff}':=W\ltimes \Lambda$. Let $\Omega$ denote the stabilizer subgroup of $W'_{\aff}$ that fixes $A_0$. The group $W_{\aff}'$ is the semi-direct product $\Omega\ltimes W_{\aff}$ \cite[\S~3.5(a)]{Ca}. For example, for type $A_2$ root system (see Figure 1 below) the subgroup $\Omega$ is a cyclic group of order 3, cyclically permuting the 3 walls of the fundamental alcove.  
A fundamental domain of $\Lambda$ can be chosen to be $(W,\bullet)$-orbit of $A_0$. This means, for any $\lambda\in \Lambda$, there exists $\mu\in \Lambda$ and a Weyl group element $w\in W,$ such that  $\la-\mu\in w\bullet A_0$. 

Note that in the definition above, as abstract groups, $W_{\aff}$ and $W'_{\aff}$ do not depend on the level $p$. When $p$ is equal to 1, this action is the $\rho$-shift of the action used in \S~\ref{sec:recoll}, that is,  $H^{p=1}_{\alpha^\vee,n}$ is the $\rho$-shift of $H_{\alpha^\vee,n}$. In \S~\ref{subsec:CategoricalAction} and \S~\ref{sec:intst} below, we will focus on the $p=1$ case. The case when $p$ is a prime number will be relevant in \S~\ref{sec:Localization}.

We refer the readers to \cite[\S~1.1]{Lu80} for the {\it local} action of $W_{\aff}$, where for each $\alpha\in I_{\aff}$ and on each alcove $s_\alpha$ acts via reflection  along the wall labelled by $\alpha$. We follow \cite[\S~2.1.2]{BMR2} to define the local action of  $W_{\aff}'$. 
This depends on the choice of a character $\lambda_0\in A_0$, so that $W_{\aff}' \bullet \lambda_0$ is a free $W_{\aff}' \bullet$-orbit.  
Given  $v\in W'_{\aff}$ and $\lambda'=w\bullet \lambda_0\in W_{\aff}' \bullet \lambda_0$ with $w\in W'_{\aff}$,  define 
  $\lambda'\star v:=wv \bullet \lambda_0$. Note that $\bullet$ is a left action of $W_{\aff}' $ on $\Lambda$, and $\star$ is a right action of $W_{\aff}'$ only on the subset $W_{\aff}' \bullet \lambda_0$. 
For $\lambda\in W_{\aff}' \bullet \lambda_0$ and $w\in W'_{\aff}$, we say $w$ increases $\lambda$ if the following holds. Write $w=s_1\cdots s_n \omega$, with $s_i\in I_{\aff}$ and $\omega\in \Omega$. Then, $\lambda\star (s_1\cdots s_{i-1})<\lambda\star (s_1\cdots s_{i})$ in the standard order of $\Lambda$ for any $i=1,\dots, n$ \cite[\S~2.1.3]{BMR2}.

\begin{rem}\label{rmk:star_dot}
	\begin{enumerate}
		\item[(a)] 	We note the difference between $A_0$ and $\nabla_+$. We consider the map $\Lambda\to \Lambda\otimes_{\bbZ}\bbQ$, sending $\lambda\in\Lambda$ to $\frac{\lambda+\rho}{p}$. Note that $\lambda$ being in $A_0$ is equivalent to $\frac{\lambda+\rho}{p}$ being in the fundamental alcove $\nabla_+$ in the present sense. In turn, this is equivalent to $-\frac{\lambda+\rho}{p}\in\nabla_-$.
		Note that the map $\Lambda\to \Lambda\otimes_{\bbZ}\bbQ$, sending $\lambda\in\Lambda$ to $-\frac{\lambda+\rho}{p}$ intertwines the natural action of $\Lambda$ on $\Lambda_\bbQ$ and the level-$p$ action of $\Lambda$ on itself, composed with the abelian group automorphism $\Lambda\to\Lambda$, $\lambda\mapsto -\lambda$.
		\item[(b)] Moreover, we have  $x\frac{\lambda+\rho}{p}=\frac{x\bullet\lambda+\rho}{p}$ for any $\lambda$ and $x\in W$. 
		\item[(c)] For $\lambda_0\in A_0$ and $x\in W_{\aff}$, with $\lambda=\lambda_0\star x$, we have equivalently  $\lambda=x\bullet\lambda_0$ and $x^{-1}\bullet\lambda=\lambda_0$.
	\end{enumerate}
\end{rem}

To be consistent with \cite{SZZ} and \S~\ref{sec:recoll}, the roots of the Borel subalgebra $\fb$ are positive roots $\Delta^+$. Note that this is the opposite of the convention used in \cite[\S~1.1.1]{BMR2}, and hence results in an opposite lifting of the affine Weyl group into the affine braid group, as will be discussed in detail in Remark~\ref{rem:Left_right_J}(a).

Now we recall the affine braid group. 
The group $B_{\aff}$ is generated by $\wt{w}$ for $w\in W_{\aff}$ subject to the relation $\widetilde{wu}=\wt{w}\wt{u}$ when $l(wu)=l(w)+l(u)$ \cite[\S~3]{Mac}.  
There is a set theoretical lifting $C :W_{\aff}\to B_{\aff}$, sending the simple reflection $s_\alpha$ for $\alpha\in I_{\aff}$ to $\widetilde{s}_\alpha$, and any reduced decomposition $w=s_{\alpha_1}\cdots s_{\alpha_{k}}$ to $\wt{w}=\widetilde{s}_{\alpha_1}\cdots \widetilde{s}_{\alpha_{k}}$.  

On the extended affine Weyl group $W_{\aff}'$, the length function  is given by $l(w\omega)=l(w)$ for $\omega\in \Omega$. The extended affine Braid group $B_{\aff}'$ is generated by  $\wt{w}$ for $w\in W_{\aff}'$, subject to the relation $\widetilde{wu}=\wt{w}\wt{u}$ when $l(wu)=l(w)+l(u)$. As $\Omega$ permutes $I_{\aff}$, we have  $B_{\aff}'=B_{\aff}\rtimes\Omega$. A smaller set of generators of $B_{\aff}'$ can be chosen to be $\{\wt{s}_\alpha\mid \alpha\in I_{\aff}\}$ and $\Omega$.

The set of finite simple roots $\Sigma$ can be naturally identified with a subset of $I_{\aff}$ via their corresponding simple reflections. Alternatively, the group $B_{\aff}'$ is generated by simple finite reflections $\wt{s}_\alpha$ for $\alpha\in\Sigma$ and  translations  $\Lambda^+\subset \Lambda$ consisting of $\lambda\in \Lambda$ with $\langle\alpha^\vee,\lambda\rangle>0$ for all positive coroots $\alpha^\vee$.

For any  $w\in W$,
let $w=s_1\dots s_k$ be a reduced  decomposition of $w$. 
Then, in $B_{\aff}'$ we have the element $\widetilde{s}_1\dots \widetilde{s}_k$ via $\Sigma\subseteq I_{\aff}$. This element  is independent of the reduced word decomposition, hence is a well-defined lifting $\wt{w}\in B'_{\aff}$. Sending $w\in W$ to $\wt{w}$ defines a set-theoretical lifting $W\to B_{\aff}\subseteq B'_{\aff}$. Since $w^{-1}=s_k\dots s_1$,  we have $\wt{w^{-1}}=\widetilde{s}_k\dots \widetilde{s}_1$, which is different from $(\wt{w})^{-1}=(\widetilde{s}_k)^{-1}\dots (\widetilde{s}_1)^{-1}$.
On the other hand, the lifting of $\Lambda\subseteq W'_{\aff}$ is a subgroup of $B_{\aff}'$ \cite[\S~3.3]{Mac}. Therefore for any $\lambda\in A$, we will not distinguish between the group inverse taken in $\Lambda$  and the group inverse taken in $B_{\aff}'$. For this reason, we will denote the lifting of $\lambda$ still by $\lambda$. Similar property holds for the subgroup $\Omega\subseteq W'_{\aff}$.

\subsection{A categorical affine braid group action}\label{subsec:CategoricalAction}
Let $G_\bbZ$ be a split $\bbZ$-form of the complex algebraic group $G$. Let $A_\bbZ\subseteq B_\bbZ\subseteq G_\bbZ$ be a maximal torus and a Borel subgroup, which we assume to have base changes $A\subset B\subset G$. Let $\fh_\bbZ\subset\fb_\bbZ\subset\fg_\bbZ$ be   the corresponding Lie algebras. 
Similar to the setup of \S~\ref{sec:recoll}, we have the integral forms $\calB_\bbZ$ and $T^*\calB_\bbZ$, etc.

In \cite{BR}, an action of the extended affine braid group on the category $D^{\ob}_{T_\bbZ}(T^*\calB_\bbZ)$ is constructed, where  elements of $B'_{\aff}$ act by Fourier-Mukai transforms. Moreover, for any algebraically closed field $k$, this affine braid group action can be base changed to $k$. 
More precisely, we have the following. The base change of the groups and varieties from $\bbZ$ to $k$ are denoted by $A_k\subseteq B_k\subseteq G_k$, $T^*\calB_k$, etc. When $k=\bbC$, we will omit the subscripts. This is consistent with the notations introduced in \S~\ref{sec:recoll}.
\begin{thm}\cite[Theorem~1.3.1, Proposition~1.4.3, Theorem~1.6.1]{BR}\label{thm:BR}
There exists a right action  of $B_{\aff}'$ on the category $D^{\ob}_{T_\bbZ}(T^*\calB_\bbZ)$, denoted by $J_b^\bbZ$ with $b \in B_{\aff}'$, such that the generator $J_s^\bbZ$ for $s\in \Sigma$ acts via a Fourier-Mukai transform with kernel the structure sheaf of a certain subvariety of $T^*\calB_\bbZ\times T^*\calB_\bbZ$ (given explicitly in \cite[\S~1.3]{BR}), and the generator $J_\lambda^\bbZ$ for $\lambda\in \Lambda$ acts via tensoring by the line bundle $\calL_\lambda$.

Similarly, for any algebraically closed field $k$, applying 
$\_\otimes^L_\bbZ k$ to the action above, we have a right action  of $B_{\aff}'$ on the category $D^{\ob}_{T_k}(T^*\calB_k)$, denoted by $J^k_b$ with $b\in B_{\aff}'$, such that the generator $J_s^k$ for $s\in \Sigma$ acts via a Fourier-Mukai transform with kernel the structure sheaf of certain subvariety of $T^*\calB_k\times T^*\calB_k$, and the generator $J_\lambda^k$ for $\lambda\in \Lambda$ acts via tensoring by the line bundle $\calL_\lambda$.
\end{thm}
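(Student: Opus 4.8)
The plan is to follow, in outline, the construction of \cite{BR}. First I would write down explicit Fourier--Mukai kernels \emph{over $\bbZ$} for a generating set of $B_{\aff}'$: for a finite simple root $\alpha\in\Sigma$ with minimal parabolic $P_\alpha$, the generator $\wt{s}_\alpha$ is assigned the kernel supported on the fiber product $T^*\calB_\bbZ\times_{T^*\calP_{\alpha,\bbZ}}T^*\calB_\bbZ$, suitably twisted by a line bundle exactly as in \cite[\S~1.3]{BR}, and for $\lambda\in\Lambda\subset W_{\aff}'$ the generator $J_\lambda^\bbZ$ is tensoring by $\calL_\lambda$. Since $\calB_\bbZ$, the partial flag schemes $\calP_{\alpha,\bbZ}$, their cotangent bundles, and the line bundles $\calL_\lambda$ are all defined and flat over $\bbZ$, these kernels are perfect complexes on $T^*\calB_\bbZ\times T^*\calB_\bbZ$ with $\bbZ$-flat cohomology. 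Next I would show that each generator acts by an autoequivalence by exhibiting its inverse kernel explicitly --- the inverse of $\wt s_\alpha$ differs from $\wt s_\alpha$ by a twist involving $\calL_{-\alpha}$, and the inverse of $J_\lambda$ is $J_{-\lambda}$ --- and checking $\calK_{s_\alpha}\star\calK_{s_\alpha}^{-1}\cong\calO_{\Delta}$ by a computation along the $\bbP^1$-fibration $\calB_\bbZ\to\calP_{\alpha,\bbZ}$.

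The heart of the argument is the verification of the defining relations of $B_{\aff}'$, which I would organize into three groups. (i) The lattice relations $J_\lambda^\bbZ\circ J_\mu^\bbZ\cong J_{\lambda+\mu}^\bbZ$ are immediate from $\calL_\lambda\otimes\calL_\mu\cong\calL_{\lambda+\mu}$, and compatibility with $\Omega$ is formal. (ii) The finite braid relations among the $\wt s_\alpha$, $\alpha\in\Sigma$, reduce as usual to the rank-$\le 2$ standard Levi subgroups, where they become a statement about convolution of the corresponding kernels on the cotangent bundle of a rank-$2$ partial flag variety; there one either carries out the explicit kernel computation or invokes the known such relations (for instance via the spherical-functor/$2$-braid-group formalism, or \cite{BR} directly). (iii) The mixed relations, which encode the semidirect-product structures $W_{\aff}'=W\ltimes\Lambda$ and $B_{\aff}'=B_{\aff}\rtimes\Omega$ --- in particular identities of the shape $J_{s_\alpha}^\bbZ\circ J_\lambda^\bbZ\cong J_{\mu}^\bbZ\circ J_{s_\alpha}^\bbZ$ with $\mu$ obtained from $\lambda$ by the affine reflection --- I would verify by a direct convolution of kernels, again localizing to the $\bbP^1$-bundle $\calB_\bbZ\to\calP_{\alpha,\bbZ}$ so that the computation is essentially rank one; when $\langle\alpha^\vee,\lambda\rangle=0$ this collapses to commutativity of $J_{s_\alpha}^\bbZ$ with tensoring by $\calL_\lambda$. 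Throughout, one must keep track of Tor-independence so that all intervening derived tensor products and pushforwards are computed by the expected flat complexes.

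Finally, for the base-change assertion: because every kernel above is represented by a complex of $\bbZ$-flat sheaves on the $\bbZ$-flat scheme $T^*\calB_\bbZ\times T^*\calB_\bbZ$, and because convolution of Fourier--Mukai kernels commutes with $\_\otimes^L_\bbZ k$ whenever the relevant fiber products are Tor-independent --- which holds here, the morphisms involved being flat or lci over $\bbZ$ --- applying $\_\otimes^L_\bbZ k$ to the isomorphisms of kernels witnessing the relations yields the corresponding isomorphisms over $k$. Hence $b\mapsto J_b^\bbZ\otimes^L_\bbZ k=:J_b^k$ is again a right action of $B_{\aff}'$, and it is by equivalences since the inverse kernels base-change as well. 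I expect the main obstacle to be group (iii): pinning down precisely which mixed relations must be checked (a minimal presentation of $B_{\aff}'$ in terms of $\Sigma$ and $\Lambda^+$) and then reducing each to an honest rank-one computation while controlling the derived functors and their $\bbZ$-integral structure --- this is exactly the technical core of \cite{BR}, and the cleanest route is probably to first establish the action over a field by a dense-open faithfulness argument and only afterwards promote it to $\bbZ$ and to all $k$ by flatness.
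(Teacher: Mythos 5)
This theorem is stated in the paper purely as a citation of \cite[Theorem~1.3.1, Proposition~1.4.3, Theorem~1.6.1]{BR}; the paper supplies no proof of its own, so there is no argument of the authors to compare your sketch against. What you have written is an outline of what the proof in \cite{BR} does, and in that sense the overall architecture you describe --- construct explicit Fourier--Mukai kernels over $\bbZ$, verify the lattice relations, reduce the finite braid relations to rank $\le 2$, treat the mixed relations via $\bbP^1$-bundle calculations, and then base change using $\bbZ$-flatness and Tor-independence --- is a fair high-level summary of the strategy in that reference.

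One substantive inaccuracy worth flagging: the kernel attached to a finite simple reflection $s_\alpha$ is \emph{not} the structure sheaf of a fiber product of the form $T^*\calB_\bbZ\times_{T^*\calP_{\alpha,\bbZ}}T^*\calB_\bbZ$. There is no natural morphism $T^*\calB\to T^*\calP_\alpha$ over which to form that fiber product (the cotangent fibers of $\calB$ and $\calP_\alpha$ run in opposite directions relative to the projection $\calB\to\calP_\alpha$). In \cite{BR} the kernel is cut out via the Grothendieck--Springer resolution $\wt{\fg}$ and its parabolic analogue $\wt{\fg}_\alpha$: one sets $S_\alpha=\wt{\fg}\times_{\wt{\fg}_\alpha}\wt{\fg}$ (which does make sense because $\wt\fg\to\wt\fg_\alpha$ exists) and then obtains the Springer-resolution kernel by an appropriate restriction to $T^*\calB\times T^*\calB\subset\wt\fg\times\wt\fg$, together with a line-bundle twist. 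Your sketch should use this variety, or simply refer to the precise definition in \cite[\S~1.3]{BR}, before any convolution computations are carried out --- the check of the inverse kernel and the mixed relations depends on having the correct support. Aside from this, the remaining ingredients of your outline (flatness over $\bbZ$, Tor-independence for base change, the minimal presentation of $B'_{\aff}$ by $\Sigma$ and $\Lambda^+$) are the right ones, and for the purposes of the present paper it would be entirely appropriate to record the statement exactly as a citation and defer the proof to \cite{BR}, as the authors do.
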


\begin{rem}\label{rem:Left_right_J}
\begin{enumerate}
\item[(a)] 
Using the local affine Weyl group action, the order on $\Lambda$, which is determined by a choice of a dominant dual Weyl  chamber $\Lambda^+$, identifies the abstractly defined $B_{\aff}$ above with the topologically defined $\pi_1(\fh^*_{\bbC,reg}/(W_{\aff},\bullet))$. Then the lifting $C:W_{\aff}\to B_{\aff}$ here is characterized as the set of positive paths going from any alcove to an alcove above it in the natural order \cite[Lemma~1.8.1(c)]{BM}.

As mentioned above, the choice of positive roots in the present paper is  opposite to that in \cite{BMR2, BR}, and hence the anti-dominant chamber $-\Lambda^+$ in our setting consists of ample line bundles. 
Consequently, the order on $\Lambda$ considered here is opposite to that of \cite{BMR2, BR}. 
For comparison, denote the order of \cite{BMR2, BR} by $>^{BMR}$. 
Then, for any $s\in I_{\aff}$, $\lambda\star s>\lambda$ is equivalent to $\lambda\star s<^{BMR}\lambda\star s\star s=\lambda$. 
Taking the corresponding lifting of $s$ to $B_{\aff}'$, which is denoted by $\tilde{s}$  in the present notation, we have that the lifting taken in \cite{BMR2, BR} is $(\wt s)^{-1}$.  
\item[(b)]
The action of $B_{\aff}$ in \cite{BR} is a left action. As we will not be considering any left actions in this section, we use \cite[Remark~2.5.2]{BR} and the paragraph after \cite[Corollary~2.5.3]{BR} to translate the left action therein to a right action. The effect coincides with taking the opposite lifting as in part (a) above. Therefore, 
when $k=\bbC$, taking the Grothendieck group, the above actions of the generators $J_{\wt{s}_\alpha}$ and $J_\lambda$ for $\alpha\in \Sigma$ and $\lambda\in\Lambda$ of $B'_{\aff}$ on $K_T(T^*\calB)$ from Theorem~\ref{thm:BR} are equal to the operators $q^{-1/2}T^{\oR}_\alpha$ and $\calL_\lambda$ in \S~\ref{sec:heckeaction}, respectively. In particular, the action factors through the action of the affine Hecke algebra $\bbH$ considered in \S~\ref{sec:recoll}.

\item[(c)] To remind ourselves that this is a right action, and also to be consistent with the notations in \S~\ref{sec:recoll}, we will also write the operator $J_b$ as $J_b^{\oR}$. By our convention of a right action, for any $w\in W$, via the lifting $W\to B_{\aff}$, we have  $J^{\oR}_{\wt{w}}=J^{\oR}_{\widetilde{s}_k}\circ \cdots\circ J^{\oR}_{\widetilde{s}_1}$  if $w=s_1\dots s_k$ is a reduced  decomposition of $w$.  
\end{enumerate}
\end{rem}

\subsection{Integral form of stable bases}\label{sec:intst}
Now we define the objects $\fstab^\bbZ_{\lambda}(y)\in D^{\ob}_{T_\bbZ}(T^*\calB_\bbZ), ~y\in W,~ \la\in \Lambda_\bbQ$ in the interior of an alcove, inductively as follows:
\begin{flalign}\label{eqn:Z1}
\fstab^{\bbZ}_{\lambda_0}(id)=\calL_{-\rho}\otimes\calO_{T^*_{id}\calB_\bbZ}, &\quad \la_0\in \na_-,  \\
\label{eqn:Z2}
\fstab^{\bbZ}_{\lambda_0}(w)=J_{\wt{w}}^{\oR}\fstab^{\bbZ}_{\lambda_0}(id), & \quad \la_0\in \na_-, \\ \label{eqn:Z3}
\fstab^\bbZ_{\lambda}(y)=(J_{\wt{x}}^{\oR})^{-1}\fstab^\bbZ_{\lambda_0}(yx), &\quad y,x\in W,~ x\lambda_0=\lambda, ~\la_0\in \na_-,\\ \label{eqn:Z4}
\fstab^\bbZ_{\lambda}(y)=e^{-y\mu}J^{\oR}_{\mu}\fstab^\bbZ_{\lambda_1}(y), &\quad y\in W, ~\mu+\lambda_1=\lambda,~ \mu\in Q,~ \lambda_1\in W\nabla_-. 
\end{flalign}
According to \ref{thm:BR}, the functor $J^{\oR}_{\mu}$ in \eqref{eqn:Z4} is twisting by $\calL_\mu$. From the definitions, $\fstab^\bbZ_{\lambda}(y)$ only depends on the alcove $\nabla$ containing $\lambda$, so sometimes it is also denoted by $\fstab^\bbZ_{\na}(y)$.

Now we are ready to state the main result of this section. 
\begin{thm}\label{thm:IntStable}
Applying $-\otimes^L_{\bbZ}\bbC$ to $\fstab^\bbZ_{\na}(w)\in D^{\ob}_{T_\bbZ}(T^*\calB_\bbZ)$, and taking the class in the Grothendieck group, we get  $\calL_{-\rho}\otimes\stab^{+,T\calB,\na}_w\in K_T(T^*\calB)$. 
\end{thm}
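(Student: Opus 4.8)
The plan is to unwind the inductive definition \eqref{eqn:Z1}--\eqref{eqn:Z4} of $\fstab^\bbZ_\na(w)$ and match it, clause by clause, with a recursive description of $\calL_{-\rho}\otimes\stab^{+,T\calB,\na}_w$ furnished by results already in hand. First I would make two general reductions. By Theorem~\ref{thm:BR} the base change $-\otimes^L_{\bbZ}\bbC$ intertwines the $B'_{\aff}$-actions $J^\bbZ$ and $J^\bbC$, and each $J^\bbC_b$ is a Fourier--Mukai transform, hence an exact endofunctor of $D^{\ob}_{T}(T^*\calB)$ descending to a $K_T(\pt)$-linear operator on $K_T(T^*\calB)$. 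Therefore it suffices to show that $\mathfrak{s}_\na(w):=[\fstab^\bbZ_\na(w)\otimes^L_{\bbZ}\bbC]$ equals $\calL_{-\rho}\otimes\stab^{+,T\calB,\na}_w$, where $\mathfrak{s}_\na(w)$ now satisfies the same recursion \eqref{eqn:Z1}--\eqref{eqn:Z4} with $J^{\oR}$ replaced by its induced $K$-theory operator. By Remark~\ref{rem:Left_right_J}(2)--(3), for a reduced word $w=s_1\cdots s_k$ in $W$ this operator sends $J^{\oR}_{\wt w}$ to $q^{-\ell(w)/2}T^{\oR}_{s_k}\cdots T^{\oR}_{s_1}=q^{-\ell(w)/2}T^{\oR}_w$, sends $J^{\oR}_\mu$ to tensoring by $\calL_\mu$, and by \eqref{eqn:T'T^R} we have $T^{\oR}_w=\calL_{-\rho}T'_{w^{-1}}\calL_\rho$ on $K_T(T^*\calB)$; in particular $(J^{\oR}_{\wt x})^{-1}$ acts by $q^{\ell(x)/2}\calL_{-\rho}(T'_{x^{-1}})^{-1}\calL_\rho$.

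Now I would run the induction following the four defining clauses. For \eqref{eqn:Z1}, $\mathfrak{s}_{\lambda_0}(id)=\calL_{-\rho}\otimes[\calO_{T^*_{id}\calB}]=\calL_{-\rho}\otimes\stp{\na_-}{id}$ by \eqref{eq:extremecases}. For \eqref{eqn:Z2}, the previous paragraph gives
\[
\mathfrak{s}_{\lambda_0}(w)=q^{-\ell(w)/2}\calL_{-\rho}T'_{w^{-1}}\calL_\rho\bigl(\calL_{-\rho}\otimes\stp{\na_-}{id}\bigr)=\calL_{-\rho}\otimes\bigl(q^{-\ell(w)/2}T'_{w^{-1}}(\stp{\na_-}{id})\bigr)=\calL_{-\rho}\otimes\stp{\na_-}{w},
\]
the last equality by Theorem~\ref{lem:actiononstable}. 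For \eqref{eqn:Z3}, with $x\lambda_0=\lambda$, $\lambda_0\in\na_-$, the inductive hypothesis gives $\mathfrak{s}_{\lambda_0}(yx)=\calL_{-\rho}\otimes\stp{\na_-}{yx}$, whence
\[
\mathfrak{s}_\lambda(y)=q^{\ell(x)/2}\calL_{-\rho}(T'_{x^{-1}})^{-1}\calL_\rho\bigl(\calL_{-\rho}\otimes\stp{\na_-}{yx}\bigr)=\calL_{-\rho}\otimes\bigl(q^{\ell(x)/2}(T'_{x^{-1}})^{-1}(\stp{\na_-}{yx})\bigr)=\calL_{-\rho}\otimes\stp{x\na_-}{y}
\]
by \eqref{eq:Pos} of Theorem~\ref{thm:wallcrossingHecke} (recall $q_x=q^{\ell(x)}$), and $x\na_-$ is the alcove containing $\lambda=x\lambda_0$. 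Finally for \eqref{eqn:Z4}, with $\lambda=\lambda_1+\mu$, $\mu\in Q$, $\lambda_1\in W\na_-$ lying in an alcove $\na'$, the inductive hypothesis together with $J^{\oR}_\mu=\calL_\mu\otimes(-)$ give $\mathfrak{s}_\lambda(y)=\calL_{-\rho}\otimes\bigl(e^{-y\mu}\calL_\mu\otimes\stab^{+,T\calB,\na'}_y\bigr)=\calL_{-\rho}\otimes\stab^{+,T\calB,\na'+\mu}_y$ by the translation formula \eqref{eq:shift}, and $\na'+\mu$ is the alcove of $\lambda$. This closes the induction, which is well-founded since \eqref{eqn:Z4} strictly reduces to $W\na_-$, \eqref{eqn:Z3} to a single operator applied to a $\na_-$-case, and \eqref{eqn:Z2} to \eqref{eqn:Z1}.

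The one delicate point — and the likeliest source of sign/scalar slips — is the precise dictionary between the categorical operators $J^{\oR}_b$ of \cite{BR} and the Demazure--Lusztig operators: the opposite choice of positive roots relative to \cite{BMR2,BR} forces the opposite lifting $W_{\aff}\to B'_{\aff}$ (Remark~\ref{rem:Left_right_J}(1)), and one must keep track of the conjugation by $\calL_{\pm\rho}$ relating $T^{\oR}_\alpha$ to $T'_\alpha$ so that the $\calL_{-\rho}$-twist survives unchanged at every step, with the powers of $q$ from each $q^{-1/2}$ factor combining into exactly the normalizations appearing in Theorems~\ref{lem:actiononstable} and \ref{thm:wallcrossingHecke}. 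One should also note that \eqref{eqn:Z1}--\eqref{eqn:Z4} is not manifestly unambiguous, since a regular $\lambda$ admits several presentations $\lambda=x\lambda_0+\mu$; but $\fstab^\bbZ_\na(w)$ depends only on the alcove $\na$ (as asserted after \eqref{eqn:Z4}), so the class $\mathfrak{s}_\na(w)$ is well defined, and in any case the consistency of the various cases is forced a posteriori by the uniqueness of the stable basis once the identifications above are established.
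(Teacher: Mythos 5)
Your proposal is correct and follows exactly the same route as the paper's (much terser) proof: reduce to $K$-theory via the base-change compatibility of Theorem~\ref{thm:BR} and the dictionary $[J^\bbC_{\wt s_\alpha}]=q^{-1/2}T^{\oR}_\alpha=q^{-1/2}\calL_{-\rho}T'_\alpha\calL_\rho$, then verify each of \eqref{eqn:Z1}--\eqref{eqn:Z4} in turn using \eqref{eq:extremecases}, Theorem~\ref{lem:actiononstable}, Theorem~\ref{thm:wallcrossingHecke}, and \eqref{eq:shift} respectively. Your extra remarks on the opposite-lift conventions and on the well-definedness of the recursion are correct bookkeeping that the paper leaves implicit, not a divergence in method.
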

\begin{proof}
We need to verify  \eqref{eqn:Z1}\eqref{eqn:Z2}\eqref{eqn:Z3}\eqref{eqn:Z4} inductively. 
The first equation is a direct consequence of Equation \eqref{eq:extremecases}.
Keeping in mind Theorem~\ref{thm:BR} and the relations $[J_{\wt{s}_\alpha}^\bbC]=q^{-1/2}T^{\oR}_\alpha$ and $T^{\oR}_\alpha=\calL_{-\rho}T'_\alpha\calL_{\rho}$, then the second equation follows directly  from Theorem~\ref{lem:actiononstable}. The third one follows from  
Theorem~\ref{thm:wallcrossingHecke}, and the last one follows from  \eqref{eq:shift}.
\end{proof}

\section{Verma modules in prime characteristic}\label{sec:Localization}
In this section, we prove that, when base changed to a field  $k$ of positive characteristic, the integral form of stable basis  defined in \S~\ref{sec:intst} agrees with the Verma modules of $U(\fg_k)$ under the localization equivalence of \cite{BM,BMR1}.

\subsection{Reminder on localization in prime characteristic}\label{subsec:babyV}
We start by a brief review of \cite{BMR1}. Now we assume $k$ is an algebraically closed field of characteristic $p$ greater than the Coxeter number of $G$. 
For any $k$-variety $X$, its Frobenius twist is denoted by $X^{(1)}$.
If   $X$ is defined over $\bbF_p$, then $X^{(1)}\cong X$ as abstract $k$-varieties. However, the natural map $X\to X^{(1)}$ is not an isomorphism.
Below we will take $X$ to be $\calB_k$, $T^*\calB_k$, etc. We will often omit the subscript $k$, when  there is a Frobenius twist. For example, ${\calB_k}^{(1)}$ will be simply denoted by $\calB^{(1)}$.   

Differentiating a character of $A_k$ defines a map $d:\Lambda\to \fh_k^*$. We call a character $\lambda\in\fh^*_k$ integral if it lies in the image of $d:\Lambda\to \fh^*_k$. An integral character is said to be regular if it does not lie on any walls $H^p_{\alpha^\vee,n}$. 
For $\lambda\in\Lambda$, on the flag variety $\calB_k$, consider the ring of $\calL_\lambda$-twisted differential operators $\calD^\lambda$, which is a sheaf of algebras. Its central subalgebra, i.e., the sheaf of functions on $T^*\calB^{(1)}$, makes any coherent $\calD^\lambda$-module  naturally a coherent sheaf on $T^*\calB^{(1)}$, and moreover, $\calD^\lambda$ is an Azumaya algebra on $T^*\calB^{(1)}_k$ \cite[\S~2.3]{BMR1}. 

There exists a vector bundle  $\calE^s$ with the following properties.
\begin{enumerate} 
\item $\calE nd_{T^*\calB^{(1)\wedge}}(\calE^s)\cong \calD^{\lambda_0}|_{T^*\calB^{(1)\wedge}}$, where $T^*\calB^{(1)\wedge}$ is the formal neighbourhood of the zero-section $\calB^{(1)}$. That is, the Azumaya algebra $\calD^{\lambda_0}$ splits on this formal neighbourhood \cite[Theorem~5.1.1]{BMR1}. 
\item $\calE^s$ is $T_k$-equivariant \cite[\S~5.2.4]{BM}, where $T_k=A_k\times k^*. $
\item Define $\calA^{\lambda_0}$ to be $End_{T^*\calB^{(1)}}(\calE^s)$, a $\calO({\calN^{(1)}})$-algebra endowed with a compatible $A_k\times(\Gm)_k$-action. Then there exists   a localization functor $\gamma^{\lambda_0}: D^{\ob}\Mod^{\gr}(\calA^{\lambda_0},A_k)\cong D^{\ob}_{T_k}\Coh(T^*\calB^{(1)}_k)$ \cite[Theorem~5.1.1]{BM}. \footnote{The bundle $\calE^s$ is not equal but equidecomposable to the bundle $\calE$ in \cite{BM}. See \cite[Corollary 1.6.8]{BM}. Hence, the algebra $\calA^{\lambda_0}$ considered here is not equal but Morita equivalent to the algebra $A$ in \cite{BM}.} 
\item The completion  $(\calA^{\lambda_0})^\wedge_0$ is related to the Lie algebra $\fg_k$ (see \eqref{eqn:comp} below).
\end{enumerate}

The algebra $U(\fg_k)$ has two central subalgebras, the Frobenius center, which  is naturally identified with $\calO(\fg^{*(1)})$, and the Harish-Chandra center, which is identified with $\calO(\fh^*_k/(W,\bullet))$ under the choice of the Borel subgroup $B_k$ and the maximal torus $A_k$. A point $W\bullet d\lambda\in\fh_k^*/(W,\bullet)$ defines a maximal ideal in $\calO(\fh^*_k/(W,\bullet))$, which is a central subalgebra of $U(\fg_k)$. The quotient of $U(\fg_k)$ by the central ideal is
denote by $U(\fg_k)^\lambda$, that is $U(\fg_k)^\lambda:=U(\fg_k)\otimes_{\calO(\fh^*_k/(W,\bullet))}k_\lambda$. Similarly $\chi\in\calN^{(1)}$ defines a central ideal, and the completion of $U(\fg_k)^{\lambda_0}$ at this ideal  is denoted by
 $U(\fg_k)^{\lambda_0}_\chi$.
Then, \begin{equation}\label{eqn:comp}(\calA^{\lambda_0})^\wedge_0\cong U(\fg_k)_0^{\lambda_0}.\end{equation}

The vector bundle $\calE^s$ above is not unique. Different choices are related by $\calD^\lambda$ \cite[Remark~5.2.2]{BMR1}. Nevertheless, as has been done in \cite{BR,BM}, the  property \eqref{eqn:verma} below about baby Verma modules fixes the choice \cite[Remark 1.3.5]{BMR2}.

For the Borel subalgebra $\fb_k$ and for any $\chi\in\calN^{(1)}$ and $\lambda\in\Lambda$, consider the $U(\fb_k)$-character $k_\lambda$ obtained via the projection $\fb\to\fh$ composing with the $\fh$-character $k_\lambda$. Recall that  the Verma module of $U(\fg_k)$ associated to the Borel $\fb$ and $\lambda\in\Lambda$ is $Z^{\fb}(\lambda):=U(\fg_k)\otimes_{U(\fb_k)}k_\lambda$, and the baby Verma module is $Z^{\fb}_\chi(\lambda):=U_\chi(\fg_k)\otimes_{U(\fb_k)}k_{\lambda}$, where $U_\chi(\fg_k)$ is the quotient of $U(\fg_k)$ by the central ideal $\chi\in \calN^{(1)}\subseteq \fg^{*(1)}$.

Recall that under the identification  $\calB_k\cong G_k/B_k$, the $T_k$-fixed points on $T^*\calB^{(1)}$ are identified with the Weyl group elements,  with $B_k$ corresponding to $id$, whose skyscraper sheaf is denoted by $k_{id}$. 
Then \cite[Example~5.3.3.(0)]{BMR1}
\begin{equation}\label{eqn:verma}\gamma^{\lambda_0} Z_0^{\fb}(\lambda_0+2\rho)\cong k_{id}. \end{equation} 

The isomorphism \eqref{eqn:comp} defines a $T_k$-action on $U(\fg_k)^{\lambda_0}_0$ making it an equivariant isomorphism. Taking completion of $Z^{\fb}(\lambda_0+2\rho)$ with respect to the maximal ideal $W\bullet d\lambda\in\fh_k^*/(W,\bullet)$ of the central subalgebra $\calO(\fh_k^*/(W,\bullet))$ defines a module over $U(\fg_k)^{\lambda_0}_0$. 
The $T_k$-action on  $U(\fg_k)^{\lambda_0}_0$ provides $T_k$-actions on  $Z_0^{\fb}(\lambda_0+2\rho)$ and $Z^{\fb}(\lambda_0+2\rho)$, making \eqref{eqn:verma} equivariant.  Taking $\Gm$-finite vectors in $Z^{\fb}(\lambda_0+2\rho)$, we get an $A_k$-Koszul graded module of $\calA^{\lambda_0}$, which, without causing confusions, will still be denoted by $Z^{\fb}(\lambda_0+2\rho)$. 
 Then, under the equivalence in property (3) of \S~\ref{subsec:babyV} above,  we have the isomorphism 
\begin{equation}
\gamma^{\lambda_0} Z^\fb(\lambda_0+2\rho)\cong 
\calO_{T^*_{id}\calB^{(1)}}.
\end{equation}

\subsection{Localization and the affine braid group action}\label{subsec:LocAffBraid}
We  collect
 a few preliminary results, which are direct consequences of \cite{BMR2} reviewed above.
Let us  consider the category $\Mod_0 U(\fg_k)^\lambda$ of finitely generated modules of $U(\fg_k)^\lambda$ on which the Frobenius center $\calO(\fg^{*(1)})$ acts by the generalized character $0\in \fg^{*(1)}$.
For $\lambda$, $\mu\in\Lambda$, we define $T_\lambda^\mu:\Mod_0 U(\fg_k)^\lambda\to\Mod_0 U(\fg_k)^\mu$ sending $M$ to $[V_{\mu-\lambda}\otimes M]_\mu$. Here $V_{\mu-\lambda}$ is a finite dimensional representation with extremal weight $\mu-\lambda$, and $[-]_\mu$ means taking the component supported on the point $\mu$ in $\fh^*/\!/W$. 
Assume $\nu$ lies in a codimension-one wall of the facet of the alcove containing $\mu$. Define \[R_{\mu|\nu}:=T^\mu_\nu T_\mu^\nu: \Mod_0 U(\fg_k)^\mu\to \Mod_0 U(\fg_k)^\mu.\]
Indeed, $R_{\mu|\nu}$ depends only on the wall, not the character $\nu$ itself.
Taking mapping cone of the adjunction, we define \[\Theta_{\mu|\nu}:=cone(id\to R_{\mu|\nu}).\]
When $\mu$ is in the alcove $A_0$, then the wall on which $\nu$ lies is labeled by some $\alpha\in I_{\aff}$, therefore we will also denote $\Theta_{\mu|\nu}$ by $\Theta_\alpha$ in this case. 
For $\omega\in\Omega$, we write $T_{\lambda_0}^{\lambda_0\star \omega}:\Mod_0 U(\fg_k)^{\lambda_0}\to  \Mod_0 U(\fg_k)^{\lambda_0}$ simply as $\Theta_{\omega}$. Note that this is possible due to the fact that $\lambda_0$ and $\lambda_0\star \omega$ have the same central character in $\fh^*_k/(W,\bullet)$.
\begin{rem}\label{rmk:inverse_BMR}
For $\alpha\in I_{\aff}$, the functor $\Theta_\alpha$   is the same as that in \cite{BMR2}, and is equal to $\mathbf{I}_{\alpha}^{-1}$ in \cite{BR}. Similarly, for $\omega\in \Omega$, $\Theta_\omega$ in the present paper is $\bfI_{\omega^{-1}}$ in \cite{BR}. This inversion is also noticed in \cite[Remark~2.5.2]{BR}. In the present paper, we adopt the notation from \cite{BMR2} so as  to consider right actions throughout. This is consistent with Remark \ref{rem:Left_right_J}.
\end{rem}

The following is essentially \cite[Theorem~2.1.4]{BMR2} and \cite[Theorem~ 2.5.4]{BR}, taking into account the correction in \cite[Remark~2.5.2]{BR} and the translation between the left and right actions in Remark~\ref{rem:Left_right_J}.

\begin{thm}\label{thm:loc_wall} Let $\lambda_0$ be in the alcove $A_0$.
	\begin{enumerate} 
		\item[(i)] The functors $\Theta_\alpha$ for $\alpha\in I_{\aff}$ and $R_\omega$ for $\omega\in\Omega$ induce  right actions    of the group  $B_{\aff}'$ on the categories $D^{\ob}\Mod_{0}U(\fg_k)^{\lambda_0}$, $D^{\ob}\Mod_0^{\gr}(U(\fg_k)^{\lambda_0},A_k)$, $D^{\ob}\Mod^{\gr}(U(\fg_k)^{\lambda_0}_0,A_k)$, and $D^{\ob}\Mod^{\gr}(\calA^{\lambda_0},A_k)$. 
		\item[(ii)] 	For any	$b\in B_{\aff}'$, write the corresponding  auto-equivalence as $\Theta_b^{\oR}$. The localization functor $\gamma^{\lambda_0}$ intertwines these two $B_{\aff}'$-actions. That is, for any $b\in B_{\aff}'$, the following diagram commutes
		\[\xymatrix{
			D^{\ob}\Mod_0U(\fg_k)^{\lambda_0}\ar[rr]^{\gamma^{\lambda_0}}&&D^{\ob}\Coh_{\calB^{(1)}}T^*\calB^{(1)}\\
D^{\ob}\Mod_0U(\fg_k)^{\lambda_0}\ar[u]^{\Theta^{\oR}_b}\ar[rr]^{\gamma^{\lambda_0}}&& D^{\ob}\Coh_{\calB^{(1)}}T^*\calB^{(1)}\ar[u]^{J^{\oR}_b}.
		}\]
Similar conclusions hold for the completed or  graded versions: \[D^{\ob}\Mod_0^{\gr}(U(\fg_k)^{\lambda_0},A_k), \quad D^{\ob}\Mod^{\gr}(U(\fg_k)^{\lambda_0}_0,A_k), \quad \text{ and } \quad  D^{\ob}\Mod^{\gr}(\calA^{\lambda_0},A_k). \]
	\end{enumerate}
\end{thm}

Note that under this identification of the two affine braid group actions, $\Pic(T^*\calB)\cong\Lambda$ acts on $\Lambda$ as in the level-$p$ affine Weyl group action $\bullet$.

The un-graded version of the following has been well-known
(see, e.g., \cite[Lemma~4.7]{Janz}). Here we give a different proof, which also holds in the $A_k$-Koszul-bigraded setting.

\begin{lem}\label{lem:babyV&T}
Assuming $\mu$ is regular. 
Let $\alpha\in I_{\aff}$ be such that $\dot{\mu}:=\mu\star s_\alpha>\mu$, then we have as objects in $D^{\ob}\Mod^{\gr}(\calA^{\lambda_0},A_k)$
\begin{align*}\Theta_{\wt{s}_\alpha}^{-1}(Z^{\fb}_0(\mu+2\rho))\cong Z^{\fb}_0(\dot{\mu}+2\rho),\\
\Theta_{\wt{s}_\alpha}^{-1}(Z^{\fb}(\mu+2\rho))\cong Z^{\fb}(\dot{\mu}+2\rho).\end{align*}
\end{lem}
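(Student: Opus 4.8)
The plan is to prove Lemma~\ref{lem:babyV&T} by reducing the graded statement to the ungraded one, and then establishing the ungraded isomorphism via the localization equivalence $\gamma^{\lambda_0}$ together with the categorical affine braid group action and its compatibility with the functors $\Theta_\alpha$. First I would recall from \S~\ref{subsec:LocAffBraid} (following \cite{BMR2} and \cite{BR}) the key compatibility: under the localization equivalences $\gamma^\mu$ for the various $\mu$ in the $W'_{\aff}$-orbit of $\lambda_0$, the reflection functors $\Theta_{\wt{s}_\alpha}$ on the module side intertwine the affine braid group action $J^{\oR}$ on the coherent side (up to the inversion conventions recorded in Remark~\ref{rmk:inverse_BMR} and Remark~\ref{rem:Left_right_J}). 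Concretely, $\gamma^{\lambda_0}\circ\Theta_{\wt{s}_\alpha}^{-1}\cong J^{\oR}_{\wt{s}_\alpha}\circ\gamma^{\mu}$ as functors between the appropriate derived categories of coherent sheaves on $T^*\calB^{(1)}$, where the twisted Springer fibers match up correctly for $\chi=0$ (both sides being supported on the zero section $\calB^{(1)}$).

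The heart of the argument is then to identify $\gamma^\mu(Z^\fb_0(\mu+2\rho))$ and $\gamma^\mu(Z^\fb(\mu+2\rho))$ with the objects $\calO_{T^*_{id}\calB^{(1)}}$ (resp. its completed/non-completed avatar) on the coherent side. For $\mu=\lambda_0$ this is exactly the normalization of the splitting bundle recalled after Theorem~\ref{thm: split} and in \S~\ref{subsec:babyV}: by \cite[Example~5.3.3.(0)]{BMR1} and \cite[Remark~1.3.5]{BMR2}, $\gamma^{\lambda_0}Z^\fb_0(\lambda_0+2\rho)\cong k_\fb$ and $\gamma^{\lambda_0}Z^\fb(\lambda_0+2\rho)\cong\calO_{T^*_{id}\calB^{(1)}}$, equivariantly for the $A_k$-action and Koszul grading. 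For general $\mu$ in the alcove, the same statement should hold because the splitting bundles for the different regular integral weights in one $W'_{\aff}$-orbit are normalized compatibly (this is the content of the construction of the $\gamma^\mu$ in \cite{BMR2}, \S~2.1), so that the delta-module $\delta^\mu_\fb$ (resp. its central reduction) always localizes to the structure sheaf of the cotangent fiber at $id$; compare \eqref{eqn:SectionVerma} and \eqref{eqn:SectionBabyVerma}. Granting this, the lemma follows: apply $\Theta_{\wt{s}_\alpha}^{-1}$ to $Z^\fb_0(\mu+2\rho)$, pass through $\gamma^\mu$ to get $\gamma^\mu(Z^\fb_0(\mu+2\rho))\cong k_\fb$ (or $\calO_{T^*_{id}\calB^{(1)}}$ in the non-completed case), apply $J^{\oR}_{\wt{s}_\alpha}$, and then apply $(\gamma^{\dot\mu})^{-1}$ to land on $Z^\fb_0(\dot\mu+2\rho)$; the point is that $J^{\oR}_{\wt{s}_\alpha}$ fixes the relevant object because $\wt{s}_\alpha$ is the lift of the reflection in the face separating $\mu$ from $\dot\mu$, and the localization of the delta-module at $id$ is the same for both $\mu$ and $\dot\mu=\mu\star s_\alpha$.

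Alternatively — and this may be cleaner and is presumably what ``a different proof'' refers to — I would argue entirely on the module side using the standard intertwining properties of translation functors through a single wall. Since $\dot\mu=\mu\star s_\alpha>\mu$ and both $\mu,\dot\mu$ are regular, the functor $\Theta_{\wt s_\alpha}$ fits into the exact triangle $\Theta_{\wt s_\alpha}\to \mathrm{id}\to R_{\mu|\nu}$ (up to shift/inversion conventions), and one computes directly that $R_{\mu|\nu}(Z^\fb_\chi(\mu+2\rho))$ is an extension of $Z^\fb_\chi(\mu+2\rho)$ by $Z^\fb_\chi(\dot\mu+2\rho)$ — this is the classical computation of wall-crossing on (baby) Verma modules, cf. \cite[Lemma~4.7]{Janz}. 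Taking the cone then yields $\Theta_{\wt s_\alpha}^{-1}(Z^\fb_\chi(\mu+2\rho))\cong Z^\fb_\chi(\dot\mu+2\rho)$. The same computation with $U(\fg_k)$ in place of $U_\chi(\fg_k)$, i.e. with genuine Verma modules, gives the second isomorphism; one uses that $Z^\fb(\mu+2\rho)$ has a BGG-type filtration behaviour under translation through a wall that is controlled by the two weights $\mu$ and $s_\alpha\bullet$-related weights in its $W$-orbit, and regularity ensures only two terms appear. Finally, to upgrade to $D^{\ob}\Mod^{\gr}(\calA^{\lambda_0},A_k)$, I would observe that all the modules and functors involved carry natural $A_k$-equivariant structures and Koszul gradings coming from the $T_k$-equivariant structure on $\calE^s$ (as recalled in \S~\ref{subsec:babyV}), and that the exact triangle defining $\Theta_\alpha$ and the extension computation are manifestly $A_k\times(\Gm)_k$-equivariant; hence the ungraded isomorphism refines to the graded one.

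The main obstacle I anticipate is bookkeeping the many conventions: the opposite choice of positive roots relative to \cite{BMR2,BR}, the left-vs-right action of the braid group, the inversion $\Theta_\alpha = \mathbf{I}_\alpha^{-1}$, and the $\rho$-shifts relating $\star$, $\bullet$, and the geometric alcoves (Remarks~\ref{rmk:star_dot}, \ref{rem:Left_right_J}, \ref{rmk:inverse_BMR}). Getting the direction of the inequality $\dot\mu>\mu$ to match the \emph{inverse} $\Theta_{\wt s_\alpha}^{-1}$ (rather than $\Theta_{\wt s_\alpha}$) on the correct side is exactly where sign errors creep in, and I would devote care to checking it against the $\lambda_0$-base case, where the answer $\gamma^{\lambda_0}Z^\fb(\lambda_0+2\rho)\cong\calO_{T^*_{id}\calB^{(1)}}$ is pinned down by \cite[Remark~1.3.5]{BMR2}. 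The actual homological input — that wall-crossing through a single wall produces a two-step extension of (baby) Vermas — is standard and I would cite \cite{Janz} rather than reprove it, focusing the written proof on the equivariant enhancement and the convention matching.
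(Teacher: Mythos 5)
Your second approach (translation functors on the module side, citing Janzten) is exactly the route the paper \emph{declines} to take: the paper opens the lemma with ``The following has been well-known, see, e.g., \cite[Lemma~4.7]{Janz}. Here we give a different proof, which also holds in the $A_k$-Koszul-bigraded setting.'' Your claim that ``the exact triangle defining $\Theta_\alpha$ and the extension computation are manifestly $A_k\times(\Gm)_k$-equivariant'' is precisely the point in doubt. The Koszul grading on $\Mod^{\gr}(\calA^{\lambda_0},A_k)$ is not intrinsic to the representation category; it is induced geometrically via the $T_k$-equivariant structure on the splitting bundle $\calE^s$. The translation functors $T^\mu_\nu$ are defined by tensoring with finite-dimensional $\fg_k$-modules and taking generalized-central-character components, and it is not at all manifest that these operations lift to the Koszul-graded category, nor that the Janzten-style extension computation can be carried out gradedly. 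Asserting that an ungraded isomorphism ``refines'' to a graded one is a genuine gap, not bookkeeping.

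The paper's actual proof is different from both of your proposals, and it is designed to make the graded upgrade automatic. It works at the level of twisted $\calD$-modules, \emph{before} the splitting bundle is invoked: it cites the commutative square from \cite[Theorem~2.2.8]{BMR2} (with the sign/inversion correction of \cite[Remark~2.5.2]{BR}), which relates $\Theta_\alpha^{-1}$ on the module side to $-\otimes\calL_{\dot\mu-\mu}$ on the $\calD$-module side, under the un-twisted global sections functor $R\Gamma_{\calD^\mu,\chi}$. Then one observes the elementary relation $\delta^{\dot\mu}_{\fb,\chi}\cong\delta^\mu_{\fb,\chi}\otimes\calL_{\dot\mu-\mu}$ and invokes \eqref{eqn:SectionVerma}, \eqref{eqn:SectionBabyVerma}. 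Because the $\calD$-modules $\delta^\mu_\fb$ and the line bundles are manifestly $T_k$-equivariant objects, the resulting isomorphism of (baby) Vermas is $A_k$- and Koszul-graded by construction; the Verma (non-baby) case needs one more standard step of passing to completions and taking finite vectors. Your first approach is closer in spirit to this but phrases everything through $\gamma^\mu$ (coherent sheaves twisted by the splitting bundle) rather than $R\Gamma_{\calD^\mu,\chi}$, which introduces a subtle dependency: you need $\gamma^\mu Z^\fb_0(\mu+2\rho)\cong k_\fb$ for arbitrary regular $\mu$, not just $\mu=\lambda_0$. In the paper that statement appears only afterwards, in Remark~\ref{rmk:other_localizations}(4), \emph{as a consequence} of the lemma together with the definition of $\gamma^\mu$ — so using it here would be circular unless you independently pin down compatible normalizations of the splitting bundles for all $\mu$ in the orbit. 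The paper avoids this entirely by never comparing splitting bundles across alcoves.
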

\begin{proof}
The usual equivariant D-module calculation as in \cite{BB} (see also \cite[\S~11.2.11]{BF}) yields that the global section of the delta-D-module at $id\in\calB_k$ gives the Verma module over $U(\fg_k)$. More precisely, for any $\lambda$ regular and integral, let $\delta_\fb^\lambda$ be the module of $\calD^\lambda$  given by the $\delta$-distributions at $id\in\calB_k$. Under the equivalence $R\Gamma_{\calD^\lambda,0}:D^{\ob}(\Coh_{0}\calD^\lambda)\overset{\cong}\longrightarrow D^{\ob}(\Mod_{0}U(\fg_k)^\lambda)$ \cite[Theorem 3.2]{BMR1}, 
\begin{equation}\label{eqn:SectionVerma}
R\Gamma_{\calD^\lambda,0}(\delta_\fb^\lambda)\cong Z^\fb(\lambda+2\rho).
\end{equation}
Similarly, let $\delta^\lambda_{\fb,0}$ be the central reduction of $\delta_\fb^\lambda$ with respect to the sheaf of ideals corresponding to $\calB^{(1)}\subseteq T^*\calB^{(1)}$ in the center of $\calD^\lambda$. Then we have \cite[\S~3.1.4]{BMR1}
\begin{equation}\label{eqn:SectionBabyVerma}
R\Gamma_{\calD^\lambda,0}(\delta_{\fb,0}^\lambda)\cong Z^\fb_0(\lambda+2\rho).
\end{equation}

By \cite[Theorem~2.2.8]{BMR2} (taking into account the correction in \cite{BR} in Remark~2.5.2 and the paragraph afterwards), the following diagram commutes 
\[
\xymatrix{
D^{\ob}\Coh_0\calD^\mu\ar[rr]^{-\otimes\calL_{\dot{\mu}-\mu}}\ar[d]^{R\Gamma_{\calD^\mu,0}}&&D^{\ob}\Coh_0\calD^{\dot{\mu}}\ar[d]^{R\Gamma_{\calD^{\dot{\mu}},0}}\\
D^{\ob}\Mod_0 U(\fg_k)^\mu\ar[rr]_{\Theta_{\alpha}^{-1}}&&D^{\ob}\Mod_0 U(\fg_k)^{\dot\mu}
.}
\]
Here we have used the assumption that $\dot{\mu}=\mu\star s_\alpha>\mu$. 
By definition, \[\delta_{\fb, 0}^{\dot{\mu}}\cong \delta_{\fb, 0}^{\mu}\otimes\calL_{\dot{\mu}-\mu}.\]
By \eqref{eqn:SectionBabyVerma}, $R\Gamma_{\calD^\mu,0}(\delta_{\fb, 0}^{{\mu}})=Z^\fb_0(\mu+2\rho)$, and $R\Gamma_{\calD^{\dot{\mu}},0}(\delta_{\fb, 0}^{\dot{\mu}})=Z^\fb_0(\dot{\mu}+2\rho)$. 
Therefore, we have $\Theta_{\alpha}^{-1}(Z^{\fb}_0(\mu+2\rho))\cong Z^{\fb}_0(\dot{\mu}+2\rho)$. Note that this isomorphism is equivariant with respect to the $A_k$ and Koszul gradings.

About the assertion on Verma modules, we follow the same argument using \eqref{eqn:SectionVerma}, with the commutative square above replaced by their counterparts for the completed algebras. We get an isomorphism between the completions of $\Theta_{\alpha}^{-1}(Z^{\fb}(\mu+2\rho))$ and $Z^{\fb}(\dot{\mu}+2\rho)$, which is equivariant with respect to the $A_k$ and Koszul gradings.  Then, taking the subspaces of finite vectors with respect to the  Koszul grading gives  the required isomorphism.
\end{proof}

The following is a direct consequence of the existence of the affine braid group action on the representation category.
\begin{cor}\label{cor:Theta_Z}
If $\lambda_0\in A_0$, then  for any $w\in W$ we have $\Theta_{\widetilde{w^{^{-1}}}}^{\oR}(Z^{\fb}(\lambda_0+2\rho))=Z^{\fb}(\lambda_0\star w^{-1}+2\rho)$.
Similar conclusion holds  for the baby Verma modules. 
\end{cor}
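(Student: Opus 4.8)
The plan is to deduce the statement directly from Lemma~\ref{lem:babyV&T}, by writing $\widetilde{w^{-1}}$ as a product of the braid generators and applying the lemma one factor at a time. First I would fix a reduced decomposition $w^{-1}=s_{1}\cdots s_{m}$ with $s_{i}=s_{\alpha_{i}}$, $\alpha_{i}\in\Sigma$. Then $\widetilde{w^{-1}}=\widetilde{s}_{1}\cdots\widetilde{s}_{m}$ by the very definition of the lifting $W\to B_{\aff}$, and since $\Theta^{\oR}_{(-)}$ is a \emph{right} action of $B_{\aff}'$ (Theorem~\ref{thm:loc_wall}(1), with the convention of Remark~\ref{rem:Left_right_J}(3)), this gives
\[
\Theta^{\oR}_{\widetilde{w^{-1}}}=\Theta^{\oR}_{\widetilde{s}_{m}}\circ\cdots\circ\Theta^{\oR}_{\widetilde{s}_{1}} .
\]
Setting $v_{j}:=s_{1}\cdots s_{j}\in W$ and $\mu_{j}:=\lambda_{0}\star v_{j}=v_{j}\bullet\lambda_{0}$ for $0\le j\le m$, so that $\mu_{0}=\lambda_{0}$ and $\mu_{m}=\lambda_{0}\star w^{-1}$, I would then reduce the corollary, by telescoping along the displayed composition applied to $Z^{\fb}(\lambda_{0}+2\rho)$, to the single-step identities
\[
\Theta^{\oR}_{\widetilde{s}_{i}}\bigl(Z^{\fb}(\mu_{i-1}+2\rho)\bigr)\cong Z^{\fb}(\mu_{i}+2\rho)\quad\text{and}\quad \Theta^{\oR}_{\widetilde{s}_{i}}\bigl(Z^{\fb}_{0}(\mu_{i-1}+2\rho)\bigr)\cong Z^{\fb}_{0}(\mu_{i}+2\rho),
\]
for $1\le i\le m$. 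All the characters $\mu_{i}$ lie in the $W$-dot-orbit of $\lambda_{0}$, hence are regular and share the central character of $\lambda_{0}$, so the corresponding Verma and baby Verma modules lie in the categories on which the $\Theta^{\oR}$-action is defined.

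To verify the single-step identities I would apply Lemma~\ref{lem:babyV&T} with $\mu=\mu_{i}$ and $\alpha=\alpha_{i}$. Since $\star$ is a right $W_{\aff}'$-action and $s_{i}^{2}=e$, one has $\mu_{i}\star s_{i}=\lambda_{0}\star(v_{i}s_{i})=\lambda_{0}\star v_{i-1}=\mu_{i-1}$, so in the notation of the lemma $\dot\mu=\mu_{i-1}$, and it only remains to check $\dot\mu>\mu$, i.e. $\mu_{i-1}>\mu_{i}$. A direct computation gives
\begin{align*}
\mu_{i-1}-\mu_{i}&=v_{i-1}\bullet\lambda_{0}-v_{i-1}s_{i}\bullet\lambda_{0}\\
&=v_{i-1}\bigl((\lambda_{0}+\rho)-s_{i}(\lambda_{0}+\rho)\bigr)=\langle\lambda_{0}+\rho,\alpha_{i}^{\vee}\rangle\, v_{i-1}(\alpha_{i}).
\end{align*}
Here $\langle\lambda_{0}+\rho,\alpha_{i}^{\vee}\rangle$ is a positive integer because $\lambda_{0}\in A_{0}$, and $v_{i-1}(\alpha_{i})$ is a positive root because $v_{i}=v_{i-1}s_{i}$ is reduced, so $\ell(v_{i-1}s_{i})>\ell(v_{i-1})$. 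Thus $\mu_{i-1}-\mu_{i}$ is a non-negative integral combination of simple roots, which is exactly what $\mu_{i-1}>\mu_{i}$ means in the order on $\Lambda$ used in \S~\ref{sec:affWeyl} (recall from Remark~\ref{rem:Left_right_J}(1) that this order, being opposite to that of \cite{BMR2,BR}, declares sums of positive roots of the present paper to be positive). Lemma~\ref{lem:babyV&T} then gives $\Theta_{\widetilde{s}_{i}}^{-1}\bigl(Z^{\fb}(\mu_{i}+2\rho)\bigr)\cong Z^{\fb}(\mu_{i-1}+2\rho)$, and applying the auto-equivalence $\Theta^{\oR}_{\widetilde{s}_{i}}$ to both sides yields the first single-step identity; the baby Verma case is word for word the same, using the second isomorphism in Lemma~\ref{lem:babyV&T}.

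Finally, since the left side $\Theta^{\oR}_{\widetilde{w^{-1}}}(Z^{\fb}(\lambda_{0}+2\rho))$ is intrinsically attached to $\widetilde{w^{-1}}\in B_{\aff}'$ and the right side $Z^{\fb}(\lambda_{0}\star w^{-1}+2\rho)$ depends only on $w^{-1}$, the telescoping argument is independent of the chosen reduced word, and no input beyond Lemma~\ref{lem:babyV&T} and Theorem~\ref{thm:loc_wall}(1) is required. The step I expect to demand the most care is not mathematical depth but the bookkeeping of conventions: that $\Theta^{\oR}$ is a right action so compositions are read in reverse; that $\widetilde{w^{-1}}$ is the \emph{positive} lift of $w^{-1}$, which differs from $(\widetilde{w})^{-1}$; and that the inequality $\mu_{i-1}>\mu_{i}$ is tested in the order of \S~\ref{sec:affWeyl} rather than in the opposite order of \cite{BMR2,BR}. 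Once these are fixed, the proof is the short induction above.
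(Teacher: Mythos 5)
Your proof is correct and follows essentially the same route as the paper: both pick a reduced word, apply Lemma~\ref{lem:babyV&T} one braid generator at a time, and invoke the fact that $w$ increases $\lambda_0\star w^{-1}$. The only difference is cosmetic — the paper runs the telescope backwards, applying $(\Theta^{\oR}_{\widetilde{w^{-1}}})^{-1}$ to $Z^{\fb}(\lambda_0\star w^{-1}+2\rho)$ and simply asserting the "increases" condition as clear, whereas you run it forwards from $Z^{\fb}(\lambda_0+2\rho)$ and explicitly verify $\mu_{i-1}>\mu_i$ by computing $\mu_{i-1}-\mu_i=\langle\lambda_0+\rho,\alpha_i^\vee\rangle\,v_{i-1}(\alpha_i)$, which is a nice bit of bookkeeping the paper leaves implicit.
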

\begin{proof}
We have $\lambda_0=\lambda_0\star w^{-1}\star w$. Clearly $w$ increases $\lambda_0\star w^{-1}$ by assumption. 
Then, the statement follows directly by  applying  Lemma~\ref{lem:babyV&T} iteratively to $\Theta_{\widetilde{w^{^{-1}}}}^{\oR}=\Theta^{\oR}_{\widetilde{s}_1}\circ \cdots\circ \Theta^{\oR}_{\widetilde{s}_k}$ for any reduced  decomposition $w=s_1\dots s_k$ of $w$, similar to the calculation done in Remark~\ref{rem:Left_right_J}(c). We get $(\Theta_{\widetilde{w^{^{-1}}}}^{\oR})^{-1}Z^\fb(\lambda_0\star w^{-1})=(\Theta^{\oR}_{\widetilde{s}_1}\circ \cdots\circ \Theta^{\oR}_{\widetilde{s}_k})^{-1}Z^\fb(\lambda_0\star w^{-1})=(\Theta^{\oR}_{\widetilde{s}_k})^{-1}\circ \cdots\circ (\Theta^{\oR}_{\widetilde{s}_1})^{-1}Z^\fb(\lambda_0\star w^{-1})=Z^\fb(\lambda_0\star w^{-1}\star w)$.
\end{proof}

Motivated by  \cite[p.870]{BM}, for any $\lambda$ in the $ W_{\aff}'$-orbit of $\lambda_0$, we define the localization functor 
\[\gamma^\lambda:D^{\ob}\Mod_0U(\fg_k)^{\lambda_0}\to D^{\ob}\Coh_{\calB^{(1)}}T^*\calB^{(1)},\] which is determined by the property that $\gamma^{\lambda\star w}=\gamma^{\lambda}\circ \Theta_{\widetilde{w^{^{-1}}}}^{\oR} $ for $w\in W'_{\aff}$, as long as $w$ increases $  \lambda$.
Localization functor associated to singular $\la$ has also been studied in \cite{BMR2}, although we will not explicitly use this.
In particular, if $\lambda_0\in A_0$,  and $w\in W$ such that $\lambda=\lambda_0\star w$, then using the fact that $\lambda_0=\lambda_0\star w\star w^{-1}$ and that $w^{-1}$ increases $\lambda_0\star w$, we get 
			\begin{equation}\label{eqn:L_Theta}
			\gamma^{\lambda_0}= \gamma^{\lambda}\circ \Theta_{\widetilde{w}}^{\oR}=J_{\widetilde{w}}^{\oR}\circ\gamma^{\lambda}.
			\end{equation}

\subsection{A categorification of the stable basis via the Verma modules}

Now we are ready to state and prove the main theorem of this section.

\begin{thm}\label{Thm:cat}
Let $k$ be an algebraically closed field of characteristic $p$, and $p$ is greater than the Coxeter number. Assume $\lambda$ is regular and integral, then in $D^{\ob}_{T_k}(T^*\calB_k^{(1)})$, we have isomorphisms	\[e^{\rho}\fstab^k_{-\frac{\lambda+\rho}{p}}(y)\cong \gamma^{\lambda} Z^{\fb}(y\bullet \lambda+2\rho).\]
\end{thm}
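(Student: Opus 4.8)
The plan is to prove the isomorphism by induction on the cell structure of the alcove containing $-\frac{\lambda+\rho}{p}$, mirroring the inductive definition of $\fstab^\bbZ_\lambda(y)$ in \eqref{eqn:Z1}--\eqref{eqn:Z4} on the geometric side with the corresponding inductive description of the Verma modules on the representation side. First I would set up the base case. By Remark~\ref{rmk:star_dot}(1), $-\frac{\lambda+\rho}{p}\in\na_-$ exactly when $\lambda\in A_0$, i.e. $\lambda=\lambda_0$ in the notation of \S~\ref{subsec:babyV}. In that case $\fstab^\bbZ_{\lambda_0}(id)=\calL_{-\rho}\otimes\calO_{T^*_{id}\calB_\bbZ}$ by \eqref{eqn:Z1}, so base changing to $k$ and twisting by $e^\rho$ gives $\calO_{T^*_{id}\calB^{(1)}_k}$ (the Frobenius twist is harmless here since the zero section and its cotangent fibre are the relevant loci). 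On the other hand Remark~\ref{rmk:other_localizations}(4) (or the normalization of the splitting bundle recalled in \S~\ref{subsec:babyV}, \cite[Example~5.3.3.(0)]{BMR1}) gives $\gamma^{\lambda_0}Z^\fb(\lambda_0+2\rho)\cong\calO_{T^*_{id}\calB^{(1)}}$. Since $id\bullet\lambda_0=\lambda_0$, this matches $y=id$.

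**The inductive steps.** I would then propagate the isomorphism along the three remaining clauses. For \eqref{eqn:Z2}, applying $J^{\oR}_{\wt w}$ to the base case and using Theorem~\ref{thm:loc_wall}(2) — which says $\gamma^{\lambda_0}\circ\Theta^{\oR}_b=J^{\oR}_b\circ\gamma^{\lambda_0}$ — together with Corollary~\ref{cor:Theta_Z} (applied with $\lambda_0\in A_0$, giving $\Theta^{\oR}_{\widetilde{w^{-1}}}Z^\fb(\lambda_0+2\rho)=Z^\fb(\lambda_0\star w^{-1}+2\rho)$), we must take care of the discrepancy between $\wt w$ and $\widetilde{w^{-1}}$; this is exactly where I would invoke the careful bookkeeping of \S~\ref{sec:affWeyl} and Remark~\ref{rem:Left_right_J} relating the lifting $W\to B_{\aff}$, right actions, and the identity $\wt{w^{-1}}\ne(\wt w)^{-1}$, and match it against $J^{\oR}_{\wt w}=J^{\oR}_{\wt{s}_k}\circ\cdots\circ J^{\oR}_{\wt{s}_1}$. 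For the wall-crossing clause \eqref{eqn:Z3}, namely $\fstab^\bbZ_\lambda(y)=(J^{\oR}_{\wt x})^{-1}\fstab^\bbZ_{\lambda_0}(yx)$ with $x\lambda_0=\lambda$, I would apply $(\gamma^\lambda)$ and use the defining relation $\gamma^{\lambda_0}=\gamma^{\lambda}\circ\Theta^{\oR}_{\wt x}$ from \eqref{eqn:L_Theta} (valid since $x^{-1}$ increases $\lambda_0\star x=\lambda$), so that $(\Theta^{\oR}_{\wt x})^{-1}$ on the representation side transports $Z^\fb(yx\bullet\lambda_0+2\rho)=Z^\fb((yx)\bullet\lambda_0+2\rho)$ to $Z^\fb(y\bullet(x\bullet\lambda_0)+2\rho)=Z^\fb(y\bullet\lambda+2\rho)$, using associativity of $\bullet$ and $x\bullet\lambda_0=\lambda$ (here I should double-check the $\rho$-shift compatibility $x\frac{\lambda_0+\rho}{p}=\frac{x\bullet\lambda_0+\rho}{p}$ of Remark~\ref{rmk:star_dot}(2)). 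Finally for the translation clause \eqref{eqn:Z4}, $\fstab^\bbZ_\lambda(y)=e^{-y\mu}J^{\oR}_\mu\fstab^\bbZ_{\lambda_1}(y)$ with $\mu\in Q$: tensoring by $\calL_\mu$ on $T^*\calB^{(1)}$ corresponds under $\gamma$ to the $B'_{\aff}$-element $\mu\in\Lambda$, whose action on characters is the level-$p$ shift $\lambda_1\mapsto\lambda_1+p\mu$; by Remark~\ref{rmk:star_dot}(1) the map $\lambda\mapsto-\frac{\lambda+\rho}{p}$ intertwines this with ordinary translation by $\mu$ composed with $\lambda\mapsto-\lambda$, which is precisely what \eqref{eqn:Z4} records, and the extra $A_k$-weight $e^{-y\mu}$ matches because the $\Gm$-equivariant structures on both sides differ exactly by that character. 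One should check the equivariant $e^{\rho}$ twist threads through all four steps consistently; it is built into the base case and commutes with all the braid group functors since they are $T_k$-equivariant.

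**Main obstacle.** The genuine content is already packaged into Theorem~\ref{thm:loc_wall}, Corollary~\ref{cor:Theta_Z}, and Theorem~\ref{thm:IntStable} (which establishes the classical $K$-theoretic half over $\bbC$); so after base change the proof is essentially formal. The one place that requires real care — and which I expect to be the main source of sign/lifting errors — is keeping the three competing conventions aligned: (i) the opposite choice of positive roots relative to \cite{BMR2,BR} and the resulting opposite order on $\Lambda$ (Remark~\ref{rem:Left_right_J}(1)), which flips a generator $\wt s$ to $(\wt s)^{-1}$; (ii) the left-versus-right action translation, which introduces another inversion; and (iii) the distinction between the liftings $\wt w$ and $\wt{w^{-1}}$, and between $w$ and $w^{-1}$ acting via $\bullet$ and via $\star$. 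The cleanest way to manage this is to phrase everything in terms of the right action $J^{\oR}$ and the identities \eqref{eqn:L_Theta} and Corollary~\ref{cor:Theta_Z} exactly as stated, never re-deriving the lifts, and to verify at the level of $K$-theory classes (via Theorem~\ref{thm:IntStable} and Remark~\ref{rem:Left_right_J}(2), which identifies $[J^\bbC_{\wt s_\alpha}]=q^{-1/2}T^{\oR}_\alpha$) that no residual scalar or shift has been dropped. Once the conventions are pinned down, each of the four inductive clauses is a one-line application of a diagram commuting, and the theorem follows.
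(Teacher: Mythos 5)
Your proposal is correct and follows essentially the same route as the paper's proof: induction over the four defining clauses \eqref{eqn:Z1}--\eqref{eqn:Z4}, with the base case fixed by the normalization of the splitting bundle, \eqref{eqn:Z2} handled via Theorem~\ref{thm:loc_wall}(2) together with Corollary~\ref{cor:Theta_Z}, \eqref{eqn:Z3} via \eqref{eqn:L_Theta}, and \eqref{eqn:Z4} via the translation functor and a compatibility of $A_k$-gradings. The only place the paper is more careful than your sketch is in the third step, where one must decompose $\lambda=\lambda_1\star(-\mu)=-px(\mu)+x\bullet\lambda_0$ and choose $\mu$ so that $-\mu$ increases $\lambda_1$ (equivalently $-x\mu\in\Lambda^+$) before applying Theorem~\ref{thm:loc_wall}(2) a second time, but this is exactly the kind of bookkeeping you flagged as the main obstacle.
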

The left hand side is the base changed integral form of the stable basis defined in \S \ref{sec:intst}, not to be confused with the complex one. 
The relation between these two is given by Theorem~\ref{thm:IntStable}.
Using the isomorphism of $k$-varieties $T^*\calB_k\cong T^*\calB^{(1)}$, we consider $T^*\calB^{(1)}$ as the base-change of $T^*\calB_\bbZ$ to the field $k$.

\begin{proof}
As the objects $\fstab^k_{-\frac{\lambda+\rho}{p}}(y)$ are defined inductively, we prove this theorem inductively. 

Firstly, we prove the case when $\lambda=\lambda_0\in A_0$. We have $\gamma^{\lambda_0}Z^\fb(\lambda_0+2\rho)=\calO_{T^*_{id}\calB}=\calL_\rho\otimes\fstab^k_{-\frac{\lambda_0+\rho}{p}}(id)=e^\rho\fstab^k_{-\frac{\lambda_0+\rho}{p}}(id)$. Here $id\in \calB_k$  corresponds to the Borel subalgebra $\fb$ that has been used to define the  Verma modules. For $w\in W$, Remark~\ref{rmk:star_dot}(c) yields that $Z^\fb(w\bullet \lambda_0+2\rho)=Z^\fb(\lambda_0\star w+2\rho)$, so 
\begin{align*}
&\gamma^{\lambda_0}Z^\fb(\lambda_0\star w+2\rho)\overset{Cor~\ref{cor:Theta_Z}}=\gamma^{\lambda_0}\Theta^{\oR}_{\wt w}Z^\fb(\lambda_0+2\rho)
\overset{Thm~\ref{thm:loc_wall}(ii)}=J^{\oR}_{\wt w}\gamma^{\lambda_0}Z^\fb(\lambda_0+2\rho)\\
=&J^{\oR}_{\wt w}e^\rho\fstab^k_{-\frac{\lambda_0+\rho}{p}}(id)
\overset{\eqref{eqn:Z2}}=e^{\rho}\fstab^k_{-\frac{\lambda_0+\rho}{p}}(w).
\end{align*}
Here the third equality follows from the initial case above. 

Secondly, we prove this for general $\lambda\in W\bullet A_0$, a fundamental domain of the action of the Picard group. We assume $\lambda=x\bullet\lambda_0$ with $\lambda_0\in A_0$ and $x\in W$. We have 
\begin{align*}
&\fstab^k_{-\frac{\lambda+\rho}{p}}(y)\overset{Rmk~\ref{rmk:star_dot}(b)}=\fstab^k_{-x\frac{\lambda_0+\rho}{p}}(y)
\overset{ \eqref{eqn:Z3}}=(J^{\oR}_{\wt{x}})^{-1}\fstab_{-\frac{\lambda_0+\rho}{p}}(yx)\\
=&(J^{\oR}_{\wt x})^{-1}e^{-\rho}\gamma^{\lambda_0} Z^{\fb}((yx)\bullet\lambda_0+2\rho)
\overset{\eqref{eqn:L_Theta}}=e^{-\rho}\gamma^{\lambda}Z^{\fb}((yx)\bullet\lambda_0+2\rho)
=e^{-\rho}\gamma^{\lambda}Z^{\fb}(y\bullet \lambda+2\rho).
\end{align*}
Here the third equality follows from the first step above;
The last equality follows from  $(yx)\bullet\lambda_0=y\bullet \lambda$.

Lastly, we consider the case of general $\lambda$. There is an $\mu\in \Lambda$, and $\lambda_1\in W\bullet A_0$ so that $\lambda=\lambda_1\star (-\mu)$. If $\lambda_1=x\bullet \lambda_0$ for $x\in W$ and $\lambda_0\in A_0$, the above
 is equivalent to 
\[\lambda=\lambda_1\star (-\mu)=(x\bullet\lambda_0)\star (-\mu)=x\bullet((-\mu)\bullet \lambda_0)=-px(\mu)+x\bullet \lambda_0.\]
Without loss of generality, we assume $-\mu$ increases $\lambda_1$. That is,  $-x\mu\in \Lambda^+$. 
We have \begin{align*}
\fstab^k_{-\frac{\lambda+\rho}{p}}(y)=&\fstab^k_{x(\mu)-\frac{\lambda_1+\rho}{p}}(y)\\
\overset{\eqref{eqn:Z4}}=&e^{-yx(\mu)}\calL_{x(\mu)}\otimes\fstab^k_{-\frac{\lambda_1+\rho}{p}}(y)\\
\overset{Thm \ref{thm:BR}}=&e^{-yx(\mu)}J_{x(\mu)}^{\oR}e^{-\rho}\gamma^{\lambda_1}Z^\fb(y\bullet\lambda_1 +2\rho)\\
\overset{\eqref{eqn:L_Theta}}=&e^{-yx(\mu)}e^{-\rho}J_{x(\mu)}^{\oR}\gamma^{\lambda_0}(\Theta_{\wt x}^{\oR})^{-1}Z^\fb(y\bullet\lambda_1+2\rho)\\
\overset{Thm \ref{thm:loc_wall}(ii)}=&e^{-yx(\mu)}e^{-\rho}\gamma^{\lambda_0}\Theta^{\oR}_{x(\mu)}(\Theta_{\wt x}^{\oR})^{-1}Z^\fb(y\bullet\lambda_1+2\rho)\\
=&e^{-yx(\mu)}e^{-\rho}\gamma^{\lambda_0-px(\mu)}(\Theta_{\wt x}^{\oR})^{-1}Z^\fb(y\bullet\lambda_1+2\rho)\\
=&e^{-yx(\mu)}e^{-\rho}\gamma^{\lambda}Z^\fb(y\bullet\lambda_1+2\rho).
\end{align*}
Here  the third equality follows from Theorem \ref{thm:BR} and the second step. The second to last equality follows from the fact that $-x\mu$ increases $\lambda_0$, since $x\mu\in \Lambda^-$ and that the inverse of $x\mu$ is $-x\mu$; The last equality follows from the fact that $\lambda\star x^{-1}=\lambda_0-px(\mu)$, and that $x^{-1}$ increases $\lambda$, which in turn is equivalent to $x^{-1}$ increases $\lambda_0\star x$. Lastly, note  that the definition of the Verma modules only depends on an element in $\fh^*_k$, which in particular is invariant under shifting by $p\lambda'$ for any $\lambda'\in \Lambda$. Hence , $Z^\fb(y\bullet\lambda_1+2\rho)$ and $ Z^\fb(y\bullet\lambda+2\rho)$ only differ by a $A_k$-grading, namely, $e^{yx(\mu)}$. Therefore, The proof is finished.
\end{proof}

\subsection{Restriction formula and Lie algebra cohomology}\label{subsec:LieAlgCoh}

In general, for any variety $X$, let $a\in X$ be a closed point with residue field $k_a$ and  embedding $i:\{a\}\inj X$, and $\calF\in D^{\ob}\Coh(X)$. Then $\calF|_a^\vee\cong \Hom_{k_a}(i^*\calF,k_a)\cong \Hom_X(\calF,i_*k_a)=\Hom_X(\calF,\calO_a)$.

We have $k_{x\fb}=\fL^{\lambda} Z_0^{x\fb}(\lambda+2\rho)$  for any $x\in W$. Recall that here $\fb$ is labeled by $id\in W$.
Putting these together with Theorems \ref{thm:IntStable} and \ref{Thm:cat}, we get \[e^{\rho-x\rho}\stab^{+,T\calB,-\frac{\lambda+\rho}{p}}_y|_{x\fb}^\vee=\Ext^*(Z^{\fb}(y\bullet \lambda+2\rho),  Z_0^{x\fb}(\lambda+2\rho)).\]
Here the $\Ext$ is taken in the Grothendieck group of the category $\Mod^{\gr}(U(\fg_k)_0^{\lambda_0},A_k)$. 

\begin{rem}
	This is a reinterpretation of the restriction formula of stable bases in terms of Lie algebra cohomologies. Therefore, the formula \cite[Theorem~7.5]{SZZ} gives the a formula for 
	$\Ext^*(Z^{\fb}(y\bullet\lambda+2\rho),  Z_0^{x\fb}(\lambda+2\rho))^\vee$. In particular, this yields an explicit expression of the Koszul gradings on the Verma modules in terms of those on the baby Verma modules. 
\end{rem}

\begin{rem}\label{rmk:other_localizations}
Let $\omega\in\Omega$, and $\dot{\mu}:=\mu\star \omega$ then
\[\Theta_{\omega}^{-1}(Z^{\fb}_0(\mu+2\rho))\cong Z^{\fb}_0(\dot{\mu}+2\rho),\]
\[\Theta_{\omega}^{-1}(Z^{\fb}(\mu+2\rho))\cong Z^{\fb}(\dot{\mu}+2\rho).\]
The proof is similar to that of Lemma~\ref{lem:babyV&T}, using \cite[\S~2.3.1(1)]{BMR2} in place of \cite[Theorem~2.2.8]{BMR2}, again  taking into account the correction in \cite{BR} in Remark~2.5.2 and the paragraph afterwards.
Together with Lemma~\ref{lem:babyV&T}, one can see that these localization functors have the properties that $\gamma^\lambda Z_0^\fb(\lambda+2\rho)\cong k_{id}$, the latter being the skyscraper sheaf, and $\gamma^\lambda Z^\fb(\lambda+2\rho)\cong 
\calO_{T^*_{id}\calB^{(1)}}$. 
\end{rem}
The right hand side of Theorem~\ref{Thm:cat} is the standard object in the $t$-structure associated to the alcove containing $\lambda$ \cite[\S~1.8.2]{BM}.

\subsection{Other chambers}
For an arbitrary chamber $\fC$, let $B'_{\bbZ}$ be the Borel subgroup whose roots are in $\fC$. Then, identifying the maximal torus $A_\bbZ$ with the abstract Cartan group using $B'_\bbZ$, and  defining the Verma modules of $U(\fg_k)$ using $B'_k$, these Verma modules under localization functor give the categorification of stable basis associated to the chamber $\fC$.

The change of chambers are controlled by the Weyl group as usual. More precisely,
for any Weyl group element $w\in W$, taking a representative of it in $N_{G_{\bbZ}}(A_\bbZ)$, we get an automorphism $w:T_{\bbZ}\to T_{\bbZ}$ of groups together with an automorphism $w:T^*\calB_{\bbZ}\to T^*\calB_{\bbZ}$ of varieties. These two automorphisms intertwine the two actions of $T_{\bbZ}$ on $T^*\calB_{\bbZ}$, hence induces an auto-equivalence of categories $w:D^{\ob}_{T_\bbZ}\Coh(T^*\calB_\bbZ)\to D^{\ob}_{T_\bbZ}\Coh(T^*\calB_\bbZ)$. 
Applying this functor to the objects $\{\fstab_\lambda^\bbZ(y)\mid y\in W\}$ for $\lambda\in\Lambda$ regular, according to \S~\ref{subsec:WeylAction}, we obtain integral forms of the categorification of the stable basis associated to the chamber $w\fC_+$ and the polarization $w(T\calB)=T\calB\in K_{T_{\bbZ}}(T^*\calB_{\bbZ})$.


\section{Monodromy of the quantum cohomology}

The action of the affine braid group discussed in the previous section is one of the many incarnations of the same representation. In this section we briefly review some of the other incarnations, with emphasis on its appearance as the monodromy of the $G\times \bbC^*$-equivariant quantum connection of $T^*\calB$.

\subsection{Other incarnations of the braid group action}

\begin{enumerate}
	\item As has been reviewed in detail, in  \cite{BMR2} an action of $B_{\aff}'$ on $D^{\ob}(\Mod_{A\times\bbC^*}\fg_{\overline \bbF_p})$ has been constructed. 
	\item Via the localization of \cite{BM,BMR1},  $D^{\ob}(\Mod_{A\times\bbC^*}\fg_{\overline \bbF_p})\cong D^{\ob}_{A\times\bbC^*}(T^*\calB)$, the above affine braid group action lifts to an action on $D^{\ob}_{A\times\bbC^*}(T^*\calB)$. 
	\item In \cite{BR} the Fourier--Mukai kernels on $D^{\ob}_{A\times\bbC^*}(T^*\calB)$ on which the generators of $B_{\aff}'$ acts has been determined. Hence, taking Grothendieck group, this action agrees with the affine Hecke algebra $\bbH$ action on $K_{A\times\bbC^*}(T^*\calB)$ of Kazhdan--Lusztig and Ginzburg.
	\item In \cite{BMO}, building up on earlier work of Cherednik \cite{C}, the monodromy of the quantum connection of $T^*\calB$, which a priori is an action of the affine braid group on $H^*_{G\times\bbC^*}(T^*\calB)$, also agrees with the action of $\bbH$ on $K_{A\times\bbC^*}(T^*\calB)$ via proper identifications. More details will be recalled below.
	\item Were the category $D^{\ob}_{A\times\bbC^*}(T^*\calB)$ replaced by $D^{\ob}_{G\times\bbC^*}(T^*\calB)$, then the equivalences of \cite{B16, ArkhBez} give $D^{\ob}_{G\times\bbC^*}(T^*\calB)\cong D^I_{const}(G^L_K/G^L_\calO)$, where $K=\bbF_p(\!(t)\!)$, $\calO=\bbF_p[[t]]$ is the ring of integers, $G^L$ is the dual group, and $I$ is the Iwahori subgroup. The category $D^I_{const}(G^L_K/G^L_\calO)$ categorifies the action of the Iwahori-Matsumoto Hecke algebra on its (anti)spherical module. For $D^{\ob}_{A\times\bbC^*}(T^*\calB)$, the corresponding Langlands dual should be $D^I_{const}(G^L_K/A^L_\calO N^L_K)$, which is beyond our present method. Nevertheless, in \cite{SZZ} we considered its decategorification, which is  the Iwahori-invariants $\Pi^I$ in the principal series representation of $G^L_K$. This is the periodic module of the Iwahori-Matsumoto Hecke algebra. Under the isomorphism $K_{A\times\bbC^*}(T^*\calB)\cong \Pi^I$ of Hecke-modules, we proved that the $K$-theory stable basis goes to the normalized characteristic functions of the orbits, and the basis of $A$-fixed points goes to the Casselman basis \cite{C80}. 
\end{enumerate}

The result in the present paper can be seen as a contribution to yet another realization of the same affine braid group action, i.e., it comes from wall-crossings of the $K$-theory stable bases of $T^*\calB$. Consequently, wall-crossings of the $K$-theory stable bases of $T^*\calB$ are equal to the monodromy of the quantum connections. This is in line with the results of \cite{AO} that the monodromy of the quantum difference equations of Nakajima quiver varieties is equal to the wall-crossing of the elliptic stable envelope.

Now we recall the details of (4) from the above list, following \cite{BMO}.

The quantum connection of $T^*\calB$ is a flat connection on the trivial bundle $H_{G\times\bbC^*}^*(T^*\calB)\times A^\vee_{reg}$ on the dual torus $A^\vee$ with the root hyperplanes removed. This connection is equivariant with respect to the Weyl group  action, hence descents to a flat connection on $A^\vee_{reg}/W$.

On the other hand, by the work of Lusztig \cite{L}, $H^*_{G\times\bbC^*}(T^*\calB)$ can be realized as the polynomial representation $\calM_{\xi,t}$ of the graded affine Hecke algebra, where $\xi$ is the $\bbC^*$-equivariant parameter and $t\in \fh^*$ is the $A$-equivariant parameter. For any module of the graded affine Hecke algebra, there is an associated affine Knizhnik--Zamolodchikov connection on the trivial bundle on $A^\vee_{reg}$, so that derivatives with respect to $\Lie(A^\vee)=\fh^\vee$ are expressed in terms of operators in the graded affine Hecke algebra. Cherednik has studied the monodromy of such a flat connection \cite{C}. For any module $M$ of the graded affine Hecke algebra, the monodromy $\calI M$, which a priori is a representation of $\pi_1(A^\vee_{reg}/W)\cong B_{\aff}'$  on the fiber of $M$, factors through the affine Hecke algebra $\bbH$. 

In \cite{BMO}, these two flat connections have been identified. In other words, quantum multiplication by a divisor class in $H^2(T^*\calB,\bbC)\cong \fh^\vee$ is expressed as taking derivatives with respect to  the affine Knizhnik--Zamolodchikov connection. Consequently, it follows from the work of Cherednik that the monodromy $\calI (\calM_{\xi,t})$ is the polynomial representation $M_{q,z}$ of the affine Hecke algebra $\bbH$ with $q=\exp(\xi)$ and $z=\exp(t)$. Moreover, under the natural isomorphism $M_{q,z}\cong K_{G\times\bbC^*}(T^*\calB)$, the action of $\bbH$ on $M_{q,z}$ coming from the monodromy is further identified with the action on $K_{G\times\bbC^*}(T^*\calB)$ of Kazhdan--Lusztig \cite{KL} and Ginzburg \cite{CG}. 
Extending scalars of the module $M_{q,z}$ via the map $K_{G\times\bbC^*}(\pt)\to K_{A\times\bbC^*}(\pt)$, we get an action of $B'_{\aff}$ on $K_{A\times\bbC}(T*\calB)$, which factors through the $\bbH$-action considered in \S~\ref{sec:recoll}.
In other words, for any $b\in \pi_1(A^\vee_{reg}/W)\cong B_{\aff}'$,  the monodromy operator $b:\calI (\calM_{\xi,t})\to \calI (\calM_{\xi,t})$ is given by $\calI (\calM_{\xi,t})\cong K_{G\times\bbC^*}(T^*\calB)\to K_{G\times\bbC^*}(T^*\calB)\cong \calI (\calM_{\xi,t})$,
the image of $b$ under the projection $\bbC[q^\pm][B'_{\aff}]\to\bbH$.

\subsection{K-theory wall-crossing = cohomological monodromy}\label{subsec:wallC-rossingMonodromy}
Note that $\fh^\vee_{\bbC,reg}/(W_{\aff},\bullet)$ is naturally a covering space of $A^\vee_{reg}/W$. Let $\na_1$ and $\na_2$ be two alcoves in $\fh^\vee_{\bbC,reg}$, which are separated by a wall, and $\na_1$ is below $\na_2$ in the natural order determined by that on $\Lambda$. Note that this order has been used in Remark~\ref{rem:Left_right_J}(a), and is an extension of the order used in Theorem \ref{thm:nonsimplewall} for $\fC_+$. Then, the  homotopy class of a positive  path in $\fh^\vee_{\bbC,reg}$, going from a point in $\na_1$ to a point in $\na_2$ in the same $W_{\aff}$ orbit, determines a unique element \cite[Lemma~1.8.1]{BM}
\[b_{\na_2,\na_1}\in \pi_1(\fh^\vee_{\bbC,reg}/(W_{\aff},\bullet))\subseteq \pi_1(A^\vee_{reg}/W)\cong B_{\aff}'.\]
Note that we have $b_{\nabla_3,\nabla_1}=b_{\nabla_3,\nabla_2}b_{\nabla_2,\nabla_1}$, $b_{x\na_-, \na_-}=\tilde{x}$ for any $x\in W$, 
and $b_{x\nabla_-+\lambda,\nabla_-}=\lambda\wt{x}$ for any $\lambda\in\Lambda$ and $x\in W$ \cite[Lemma~1.8.1]{BM}. The subscripts of $b$ here we use are again  different from that of \cite{BM} for the same reason as explained Remark~\ref{rem:Left_right_J}(a).

To relate the affine braid group action of Bezrukavnikov and Riche with the monodromy braid group action, we first define the automorphism $\tau:B'_{\aff}\to B'_{\aff}$, $\wt{s_\alpha}\mapsto \wt{s_\alpha}$ for $\alpha\in \Sigma$, and $\lambda\mapsto-\lambda$ for $\lambda\in\Lambda$. 
Recall that the action of $B_{\aff}$, denoted by $J^{\oR}$, is such that $J^{\oR}_{ab}=J^{\oR}_b\circ J^{\oR}_a$. Hence, in case $b_{x\na_-+\la, \na_-}=\la \wt{x}$, then 
\begin{align*}
(J^{\oR}_{\tau(b_{x\nabla_-+\lambda,\nabla_-})})^{-1}&=(J^{\oR}_{\tau(\lambda\wt{x})})^{-1}\\
&=(J^{\oR}_{\wt{x}}\circ J^{\oR}_{-\lambda})^{-1}\\
&=J^{\oR}_{\lambda}\circ(J^{\oR}_{\wt{x}})^{-1}
\end{align*}

For simplicity, the  class in  the Grothendieck group of the auto-equivalence $J^{\oR}_{\tau(b)}:D^{\ob}_{T_\bbZ}(T^*\calB_\bbZ)\to D^{\ob}_{T_\bbZ}(T^*\calB_\bbZ)$ for $b\in\ B_{\aff}$ is denoted by $[b]$.
\begin{thm}\label{thm:monodromystable} Notations as above.
For each pair of alcoves $\na_1$ and $\na_2$, the monodromy action of $b_{\na_2,\na_1}\in \pi_1(A^\vee_{reg}/W)\cong B_{\aff}'$:
\[[b_{\na_2,\na_1}^{-1}]:\calI(\calM_{\xi,t})\otimes_{K_{G\times\bbC^*}(pt)}K_{A\times\bbC^*}(pt)\cong K_{A\times\bbC^*}(T^*\calB)\to K_{A\times\bbC^*}(T^*\calB)\cong\calI(\calM_{\xi,t})\otimes_{K_{G\times\bbC^*}(pt)}K_{A\times\bbC^*}(pt)\] 
sends the set $\{\calL_{-\rho}\otimes\stab^{+,T\calB,\na_1}_w\mid w\in W\}$ to $\{\calL_{-\rho}\otimes\stab^{+,T\calB,\na_2}_w\mid w\in W\}$, up to (explicitly determined) scalars. 

More precisely, assume $\nabla_1=x\nabla_-+\lambda$ and $\na_2=y\na_-+\mu$, with $x,y\in W$ and $\lambda,\mu\in Q$. Then, $[b_{\na_2,\na_1}^{-1}]$ sends $\calL_{-\rho}\otimes\stab^{+,T\calB,\na_1}_w$ to $e^{w\lambda-wxy^{-1}\mu}\calL_{-\rho}\otimes\stab^{+,T\calB,\na_1}_{wxy^{-1}}$.
\end{thm}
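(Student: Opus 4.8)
The plan is to reduce Theorem~\ref{thm:monodromystable} to the algebraic wall-crossing formulas already established, namely Theorem~\ref{thm:wallcrossingHecke}, Lemma~\ref{lem:shift}, and the explicit description of the elements $b_{\na_2,\na_1}$ in terms of the lifting $C:W_{\aff}\to B_{\aff}$. The key point is that, by the discussion in \S~\ref{subsec:wallC-rossingMonodromy}, the monodromy operator $[b^{-1}]$ is by definition the class in the Grothendieck group of the auto-equivalence $(J^{\oR}_{s(b)})^{-1}$, so everything is a computation with the $J^{\oR}$-action, which on $K$-theory factors through the affine Hecke algebra $\bbH$ and agrees with the operators $q^{-1/2}T^{\oR}_\alpha$ and $\calL_\lambda$ by Remark~\ref{rem:Left_right_J}(2).

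First I would treat the special case $\nabla_1=x\na_-$, $\nabla_2=\na_-$, i.e. $\lambda=\mu=0$ and $y=\mathrm{id}$. Here $b_{\na_-,x\na_-}=b_{\na_-,\na_-}b_{\na_-,x\na_-}$, and since $b_{x\na_-,\na_-}=\wt x$ we get $b_{\na_-,x\na_-}=(\wt x)^{-1}$, hence $s(b_{\na_-,x\na_-})^{-1}=\wt x$ (note $s$ fixes $\wt x$ for $x\in W$). So I must show that $[J^{\oR}_{\wt x}]$ sends $\calL_{-\rho}\otimes\stp{x\na_-}{w}$ to $\calL_{-\rho}\otimes\stp{\na_-}{wx}$ up to the stated scalar. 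Unwinding the identification $[J^{\oR}_{\wt x}]=q_x^{-1/2}T^{\oR}_x=q_x^{-1/2}\calL_{-\rho}T'_{x^{-1}}\calL_\rho$ (using \eqref{eqn:T'T^R} and $T^{\oR}_x=\calL_{-\rho}T'_{x^{-1}}\calL_\rho$), the claim becomes exactly the identity $q_x^{1/2}(T'_{x^{-1}})^{-1}(\stp{\na_-}{wx})=\stp{x\na_-}{w}$, which is \eqref{eq:Pos} of Theorem~\ref{thm:wallcrossingHecke}, after rearranging and applying $\calL_\rho$. So the base case is essentially a citation.

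Next I would incorporate the translations. For general $\nabla_1=x\na_-+\lambda$ and $\nabla_2=y\na_-+\mu$ with $\lambda,\mu\in Q$, I use $b_{\na_2,\na_1}=b_{\na_2,\na_-}\,b_{\na_-,\na_1}$ together with $b_{x\na_-+\lambda,\na_-}=\lambda\wt x$ (from \cite[Lemma~1.8.1]{BM}), which gives $b_{\na_1,\na_-}=\lambda\wt x$ and $b_{\na_2,\na_-}=\mu\wt y$, hence $b_{\na_2,\na_1}=\mu\wt y(\lambda\wt x)^{-1}=\mu\wt y(\wt x)^{-1}\lambda^{-1}$ in $B'_{\aff}$ (using that $\Lambda$ is a normal-enough subgroup, more precisely rewriting so that the translation part is collected; I would be careful here using the commutation relation $\wt x\lambda=(x\lambda)\wt x$ in $B'_{\aff}$). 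Applying $s$ and inverting, and then passing to the Grothendieck group via $[\lambda]=\calL_\lambda\otimes(-)$ and $[\wt x]=q_x^{-1/2}T^{\oR}_x$, the operator $[b_{\na_2,\na_1}^{-1}]$ becomes a composite of a translation by $\calL_{\pm\lambda}$, an $\bbH$-operator realizing the passage $wx\mapsto wxy^{-1}$ (base case applied twice, once for $x$ and once for $y^{-1}$), and a translation by $\calL_{\pm\mu}$. Tracking the fixed-point labels through \eqref{eq:shift} (which introduces the character factors $e^{-w\lambda}$, $e^{+wxy^{-1}\mu}$) gives the final scalar $e^{w\lambda-wxy^{-1}\mu}$ and the label $wxy^{-1}$, exactly as stated.

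The main obstacle I anticipate is \emph{bookkeeping of conventions}: the various inversions between left and right actions, between the lifting $C$ and $(\wt w)^{-1}$, between the order on $\Lambda$ used here and in \cite{BMR2,BR} (Remark~\ref{rem:Left_right_J}(1)), and the twist $s:B'_{\aff}\to B'_{\aff}$, $\lambda\mapsto-\lambda$; a sign or an inversion error anywhere will flip $\lambda$ to $-\lambda$ or $wxy^{-1}$ to $wyx^{-1}$. To control this I would verify the formula against the rank-one example of \S~5.1 or the $SL_3$ computations of \S~4, where all the scalars are $q^{1/2}-q^{-1/2}$ and the translations are explicit; matching a single wall-crossing $\na_1=x\na_-$, $\na_2=s_\al x\na_-$ against Corollary~\ref{cor:TheOne} pins down every convention. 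Once the dictionary between $[b_{\na_2,\na_1}^{-1}]$ and the Hecke-algebra operators is fixed, the proof is a short formal manipulation, so I would present it as: (i) reduce to the case $\lambda=\mu=0$, $y=\mathrm{id}$ via Lemma~\ref{lem:shift} and \eqref{eq:shift}; (ii) in that case identify $[b_{\na_2,\na_1}^{-1}]$ with $q_x^{-1/2}T^{\oR}_x$ and invoke \eqref{eq:Pos}; (iii) reassemble and read off the scalar.
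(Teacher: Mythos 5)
Your proposal is correct and follows essentially the same route as the paper: both factor $b_{\na_2,\na_1}$ through $\na_-$ via $b_{\na_2,\na_1}=b_{\na_2,\na_-}\,b_{\na_-,\na_1}$, compute $(J^{\oR}_{s(\lambda\wt x)})^{-1}=J^{\oR}_\lambda\circ(J^{\oR}_{\wt x})^{-1}$ using $J^{\oR}_{ab}=J^{\oR}_b\circ J^{\oR}_a$ and $s(\lambda)=-\lambda$, and then reduce the base case to Theorem~\ref{thm:wallcrossingHecke}. The only cosmetic difference is that you work directly with the $K$-theory classes $\stp{\na}{w}$ and equation \eqref{eq:Pos}, whereas the paper routes the same argument through the integral objects $\fstab^\bbZ_\na(w)$ and Theorem~\ref{thm:IntStable}, which is just a packaged restatement of the wall-crossing formula at the categorical level.
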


For example, in the case when  alcoves $\na_1$ and $\na_2$ are  adjacent and separated by a wall $H_{x\dot \alpha^\vee,0}$ for some $x\in W$ and $\alpha\in \Sigma$, i.e., there exists $\lambda\in \Lambda$ and $x\in W$, such that $\na_1=x\na_-$ and $\na_2=xs_\alpha\na_-$. 
Then $b_{\na_2, \na_1}=\wt{xs_\alpha}\wt{x}^{-1}$, and 
the corresponding operator  is
\[J^{\oR}_{\tau(b_{\na_2, \na_1})}=J^{\oR}_{\wt{x}^{-1}}J^{\oR}_{\wt{xs_\alpha}}= \left(J^{\oR}_{\wt{x}}\right)^{-1}J^{\oR}_{\wt{xs_\alpha}}.\] 
\begin{proof}
	Let $\nabla_1=x\nabla_-+\lambda$. Then, a direct consequence of the definition in \S \ref{sec:intst} is that $\fstab_{\na_1}^\bbZ(w)=e^{-w\lambda}J^{\oR}_\lambda(J^{\oR}_{\tilde{x}})^{-1}\fstab_{\na_-}^\bbZ(wx)$.
	Hence, induced K-theory map of the functor $(J^{\oR}_{\tau(b_{x\nabla_-+\lambda,\nabla_-})})^{-1}$ sends $\calL_{-\rho}\otimes\stab^{+,T\calB,\na_-}_{wx}$ to $e^{-w\lambda}\calL_{-\rho}\otimes\stab^{+,T\calB,\na_1}_w$ by  Theorem~\ref{thm:IntStable}.

The analysis above implies that $[b_{\nabla_1,\nabla_-}^{-1}]$, which is the induced K-theory map  of the functor $(J^{\oR}_{\tau(b_{x\nabla_-+\lambda,\nabla_-})})^{-1}$, sends
$\calL_{-\rho}\otimes\stab^{+,T\calB,\na_-}_{wx}$ to $e^{-w\lambda}\calL_{-\rho}\otimes\stab^{+,T\calB,\na_1}_w$.
 Similarly, $[b_{\nabla_2,\nabla_-}^{-1}]$ which is the induced K-theory map  of the functor  $(J^{\oR}_{\tau(b_{y\nabla_-+\mu,\nabla_-})})^{-1}$, sends $\calL_{-\rho}\otimes\stab^{+,T\calB,\na_-}_{vy}$ to $e^{-v\mu}\calL_{-\rho}\otimes\stab^{+,T\calB,\na_2}_v$. 
Hence, $[b_{\na_2,\na_1}^{-1}]=[b_{\nabla_2,\nabla_-}^{-1}][b_{\nabla_1,\nabla_-}]$ sends $\calL_{-\rho}\otimes\stab^{+,T\calB,\na_1}_w$ to $e^{w\lambda-wxy^{-1}\mu}\calL_{-\rho}\otimes\stab^{+,T\calB,\na_1}_{wxy^{-1}}$.
\end{proof}

Much more general phenomenon has been expected for symplectic resolutions. Namely, for a symplectic resolution $X$, there are derived auto-equivalences of $D^{\ob}(X)$ that come from quantizations of $X$ \cite{Kaledin}. These derived auto-equivalences are expected to be related to the monodromy of the quantum connection of $X$ \cite{O15, BO}. For quiver varieties, such results are proven by Bezrukavnikov and Okounkov \cite{BO}. In future works we will explore in more examples of symplectic resolutions  the categorification of the $K$-theoretic stable bases in terms of standard objects of the quantizations.

\section{Appendix: Wall-crossings in the example of $\SL(3,\bbC)$}\label{appendix}
In this appendix, we compute some wall-crossing matrix coefficients when $G=\SL(3,\bbC)$.

\subsection{Crossing a simple wall}\label{sec:examsl3} There are two simple roots $\al_1, \al_2$ and another positive root $\al_3=\al_1+\al_2$. Denote $s_i=s_{\al_i}$. The identity element of $W$ is denoted by $e$. The fundamental weights $\omega_i$ are noted by the  arrows. 
Let $\na_1=s_{1}\na_+, \na_2=\na_+-\varpi_1$, and $\na_3=\na_2+\varpi_2$. See  Figure 1 below.

We consider the dominant chamber $\fC_+$ and polarization $T\calB$, and  compute $f_y^{\na_1\lefta\na_+}$. 

\begin{figure}\label{fig}
	\begin{tikzpicture}[scale=1.2]
	\tikzstyle{every node}=[font=\small]
	
	
	\foreach \x in {-1}{
		\draw (\x+2,-1.7321*2+1.7321) to (3.5, -1.7321*\x+0.5*1.7321);
	}
	\draw (-2+2,-1.7321*2+1.7321) to (2.5, 1.5*1.7321);
	\draw (-3+2,-1.7321*2+1.7321) to (1.5, 1.5*1.7321);
	\draw (-2,-1.7321*1) to (0.5,1.7321*1.5);
	
	\draw (-0.5,-1.7321*(3-2.5) to (-1,0);
	\draw (0.5,-1.7321*(3-2.5) to (-0.5,0.5*1.7321);
	\draw (2.5,-1.7321+1.5*1.7321) to (-1+2,1.7321*2.0);
	\draw (2,-1.7321*2+2*1.7321) to (-2+2,1.7321*2.0);
	\draw (1.5,-1.7321*1.5+1.7321) to (-1,1.7321*2);
	\draw (3,1.7321) to (2.5,1.7321*1.5); 
	
	\draw (-1,1.7321*0.5) to (2.5,1.7321*0.5);
	\draw (0.5,1.7321+1.7321*0.5) to (3.5,1.7321+1.7321*0.5);
	\draw (-1.5,-1.7321+1.7321*0.5) to (1.5,-1.7321+1.7321*0.5);
	\draw (-1,0) to (2,0);
	\draw (-1,1.7321) to (3,1.7321);
	
	
	
	\filldraw (-1,1.7321) 
	node[anchor=south, yshift=-0.1cm,xshift=-0.1cm] {$H_{\alpha_3^\vee,2}$}
	node[anchor=east, yshift=0.2cm, xshift=1.05cm] {$+$}
	node[anchor=east, yshift=-0.2cm, xshift=0.9cm] {$-$};
	\filldraw (-1,1.7321*0.5) 
	node[anchor=south, yshift=-0.1cm, xshift=-0.4cm] {$H_{\alpha_3^\vee,1}$}
	node[anchor=east, yshift=0.2cm, xshift=0.65cm] {$+$}
	node[anchor=east, yshift=-0.2cm, xshift=0.5cm] {$-$};
	\filldraw (-1,0) 
	node[anchor=south, yshift=-0.1cm, xshift=-0.4cm] {$H_{\alpha_3^\vee,0}$};
	
	\filldraw (-1+2,1.7321*2.0) 
	node[anchor=south,xshift=0.5cm] {$H_{\alpha_1^\vee,2}$}
	node[anchor=east,xshift=0.5cm] {$+$}
	node[anchor=east] {$-$};
	\filldraw (-1+1,1.7321*2.0)
	node[anchor=south,xshift=0.3cm] {$H_{\alpha_1^\vee,1}$}
	node[anchor=east,xshift=0.5cm] {$+$}
	node[anchor=east] {$-$};
	\filldraw (-1,1.7321*2.0)
	node[anchor=south,xshift=0.2cm] {$H_{\alpha_1^\vee,0}$}
	node[anchor=east,xshift=0.5cm] {$+$}
	node[anchor=east] {$-$};
	
	\filldraw (0.5,-1.7321*2+1.7321) 
	node[anchor=south, yshift=-0.6cm, xshift=-0.2cm] {$H_{\alpha_2^\vee,0}$};
	\filldraw (-0.5,-1.7321*2+1.7321) 
	node[anchor=south, yshift=-0.6cm, xshift=-0.4cm] {$H_{\alpha_2^\vee,1}$}
	node[anchor=east, yshift=0.2cm, xshift=-0.5cm] {$+$}
	node[anchor=east, yshift=0.1cm, xshift=0.1cm] {$-$}; 
	\filldraw (-1,-1.7321*2+1.7321) 
	node[anchor=south, yshift=-0.6cm, xshift=-1.2cm] {$H_{\alpha_2^\vee,2}$}
	node[anchor=east, yshift=0.2cm, xshift=-1.2cm] {$+$}
	node[anchor=east, yshift=0.1cm, xshift=-0.6cm] {$-$}
	;
	\node[anchor=south,yshift=-0.4cm,xshift=0.6cm] at (0, 0.25*1.7321){\tiny{$\na_1$}}; 
	\node[anchor=south,yshift=0.3cm,xshift=0.6cm] at (0.5, 0.05*1.7321){\tiny{$\na_+$}}; 
	\node[anchor=south,yshift=-0.6cm,xshift=0.6cm] at (-0.5, 0.5*1.7321){\tiny{$\na_3$}}; 
	\node[anchor=south,yshift=-0.4cm,xshift=0.6cm] at (0, -0.15*1.7321){\tiny{$\na_2$}}; 
	\node[anchor=south,yshift=-0.4cm,xshift=0.6cm] at (0.5, 0){\tiny{0}}; 
	\node[anchor=south,yshift=-1cm,xshift=0.6cm] at (0.5, 0){\tiny{$\na_-$}}; 
	\node[anchor=north,yshift=0.5cm,xshift=0.6cm] at (0, 0.5*1.7321){\tiny{$\omega_2$}}; 
	\node[anchor=north,yshift=0.5cm,xshift=0.6cm] at (1, 0.5*1.7321){\tiny{$\omega_1$}}; 
	\node[xshift=0.6cm] at (0.5, 0){$\bullet$}; 
	\draw[->, thick](1,0) to (0.5, 0.5*1.7321);
	\draw[->, thick](1,0) to (1.5, 0.5*1.7321);
	\end{tikzpicture}
	\caption{Alcoves of $SL(3,\bbC)$. }
\end{figure}

We first cross $H_{\alpha_3^\vee, 0}$, i.e., $\na_1\rightarrow \na_2$. There are two ways to do this: $\na_1\rightarrow \na_+\rightarrow \na_2$ and $\na_1\rightarrow \na_3\rightarrow \na_2$. The second steps of both ways are done by translations. Then by Lemma \ref{lem:coefficients} and  \eqref{eq:shift}, we have the following formulas corresponding to the two ways.
\begin{align}\label{eq:example1}
\stp{\na_1}{y}&=\left\{\begin{array}{ll}
\stp{\na_+}{y}+f_y^{\na_1\leftarrow \na_+}\stp{\na_+}{ys_{1}}, & \textit{ if\quad} ys_{1}<y,\\
\stp{\na_+}{y}, &  \textit{ if\quad} ys_{1}>y. 
\end{array}\right.\nonumber\\
&=\left\{\begin{array}{ll}
e^{-y\varpi_1}\calL_{\varpi_1}\otimes\stp{\na_2}{y}+e^{-ys_{1}\varpi_1}f^{\na_1\leftarrow \na_+}_y\calL_{\varpi_1}\otimes\stp{\na_2}{ys_{1}}, & \textit{ if\quad} ys_{1}<y,\\
e^{-y\varpi_1}\calL_{\varpi_1}\otimes\stp{\na_2}{y}, &  \textit{ if\quad} ys_{1}>y.
\end{array}\right.
\end{align}
\begin{align}\label{eq:example2}
\stp{\na_1}{y}&=\left\{\begin{array}{ll}
\stp{\na_3}{y}+f_y^{\na_1\leftarrow \na_3}\stp{\na_3}{ys_{2}}, & \textit{ if\quad} ys_{2}<y,\\
\stp{\na_3}{y}, &  \textit{ if\quad} ys_{2}>y.
\end{array}\right.\nonumber\\
&=\left\{\begin{array}{ll}
e^{-y\varpi_2}\calL_{\varpi_2}\otimes\stp{\na_2}{y}+e^{-ys_{2}\varpi_2}f_y^{\na_1\leftarrow \na_3}\calL_{\varpi_2}\otimes\stp{\na_2}{ys_{2}}, & \textit{ if\quad} ys_{2}<y,\\
e^{-y\varpi_2}\calL_{\varpi_2}\otimes\stp{\na_2}{y}, &  \textit{ if\quad} ys_{2}>y.
\end{array}\right.
\end{align}
Comparing \eqref{eq:example1} and \eqref{eq:example2} with $y=s_{1}$,  and restricting to the fixed point $e$,  we get
\begin{align}
\label{eq:SL3}(1-e^{\alpha_1})\stp{\na_2}{s_{1}}|_e=f_{s_{1}}^{\na_1\leftarrow \na_+}\stp{\na_2}{e}|_e.
\end{align}
By the normalization formula \eqref{eq:normp},  \[\stp{\na_2}{e}|_e=(1-e^{\alpha_1})(1-e^{\alpha_2})(1-e^{\alpha_3}).\]
By \eqref{eq:shift} and Lemma \ref{lem:sheafdual}, \[\stp{\na_2}{s_{1}}|_e=e^{-\alpha_1}\stp{\na_+}{s_{1}}|_e=-e^{-\alpha_1+2\rho}(\stp{\na_-}{s_{1}})^\vee|_e=-e^{\alpha_2+\alpha_3}(\stp{\na_-}{s_{1}})^\vee|_e.\]
Due to Lemma \ref{lem:sheafdual} and \cite[Proposition 3.6(3)]{SZZ}, if $ws_{\alpha_i}>w$, we have the recursion for any $u\in W$:
\begin{equation}\label{eq:recursion}
q^{{1/2}}\stp{\na_-}{ws_{\alpha_i}}|_u=\frac{q-1}{1-e^{u\alpha_i}}\stp{\na_-}{w}|_u-\frac{e^{u\alpha_i}-q}{1-e^{-u\alpha_i}}\stp{\na_-}{w}|_{us_{\alpha_i}}. 
\end{equation}
Let $w=u=e$ and $\alpha_i=\alpha_1$, we get
\[\stp{\na_-}{s_{1}}|_e=(q^{{1/2}}-q^{-{1/2}})(1-e^{\alpha_2})(1-e^{\alpha_3}).\]
Therefore, 
\[\stp{\na_2}{s_1}|_e=-e^{\alpha_2+\alpha_3}(q^{-{1/2}}-q^{{1/2}})(1-e^{-\alpha_2})(1-e^{-\alpha_3})=(q^{{1/2}}-q^{-{1/2}})(1-e^{\alpha_2})(1-e^{\alpha_3}).\]
Hence, sovling from \eqref{eq:SL3}, we get
\[f_{s_1}^{\na_1\leftarrow \na_+}=q^{{1/2}}-q^{-{1/2}}.\]

By  Lemma \ref{lem:shift}, we also have $f_{y}^{\na_-\lefta s_1\na_-}=f_y^{\na_1\lefta\na_+}$.

\subsection{ Crossing a non-simple wall}\label{sec:continuedexe}
We calculate the constants $f_{s_1s_2}^{\na_2\leftarrow \na_1}$, $f_{s_2s_1}^{\na_2\leftarrow \na_1}$ and $f_{s_3}^{\na_2\leftarrow \na_1}$.
Let $y=s_1s_2$ in \eqref{eq:example1}, we get
\[\stp{\na_1}{s_1s_2}=e^{-s_1s_2\varpi_1}\calL_{\varpi_1}\otimes\stp{\na_2}{s_1s_2}.\]
By Lemma \ref{lem:coefficients} and the identity $f_{s_1s_2}^{\na_2\lefta\na_1}=-f_{s_1s_2}^{\na_1\lefta\na_2}$, we have
\[\stp{\na_1}{s_1s_2}=\stp{\na_2}{s_1s_2}-f_{s_1s_2}^{\na_2\leftarrow \na_1}\stp{\na_2}{s_2}.\]
Comparing the two identities and restricting to the fixed point $s_2$, we get
\begin{align}
\label{eq:SL3nonsimple}e^{-s_1s_2\varpi_1+s_2\varpi_1}\stp{\na_2}{s_1s_2}|_{s_2}=e^{\alpha_1}\stp{\na_2}{s_1s_2}|_{s_2}=\stp{\na_2}{s_1s_2}|_{s_2}-f_{s_1s_2}^{\na_2\leftarrow \na_1}\stp{\na_2}{s_2}|_{s_2}.
\end{align}
So we just need to compute $\stp{\na_2}{s_2}|_{s_2}$ and $\stp{\na_2}{s_1s_2}|_{s_2}$. 
By  (\ref{eq:recursion}) with $w=s_1, i=2, u=s_2$, we get
\[\stp{\na_-}{s_1s_2}|_{s_2}=q^{-{1/2}}\frac{q-e^{-\alpha_2}}{1-e^{\alpha_2}}\stp{\na_-}{s_1}|_e=(1-q^{-1})(q-e^{-\alpha_2})(1-e^{\alpha_3}).\]
By Lemma \ref{lem:sheafdual},
\[\stp{\na_+}{s_1s_2}|_{s_2}=e^{\alpha_1}(1-q)(1-q^{-1}e^{-\alpha_2})(e^{\alpha_3}-1).\]
By the translation formula \eqref{eq:shift},
\[\stp{\na_2}{s_1s_2}|_{s_2}=(1-q)(1-q^{-1}e^{-\alpha_2})(e^{\alpha_3}-1).\]
By the normalization formula \eqref{eq:normp}, 
\[\stp{\na_2}{s_2}|_{s_2}=q^{-{1/2}}(1-e^{\alpha_1})(1-e^{\alpha_3})(q-e^{-\alpha_2}).\]
Therefore, by using \eqref{eq:SL3nonsimple}
\[f_{s_1s_2}^{\na_2\leftarrow \na_1}=q^{{1/2}}-q^{-{1/2}}.\]
By symmetry of $\al_1$ and $\al_2$, we have
\[f_{s_2s_1}^{\na_2\leftarrow \na_1}=q^{{1/2}}-q^{-{1/2}}.\]

Let us compute $f_{s_1s_2s_1}^{\na_2\leftarrow \na_1}$. By Lemma \ref{lem:coefficients}, we have
\[\stp{\na_1}{s_1s_2s_1}=\stp{\na_2}{s_1s_2s_1}-f_{s_1s_2s_1}^{\na_2\leftarrow \na_1}\stp{\na_2}{s_1}.\]
Hence, we only need to know $\stp{\na_1}{s_1s_2s_1}|_{e}$ and $\stp{\na_2}{s_1s_2s_1}|_{e}$. We  compute $\stp{\na_-}{s_1s_2s_1}|_{e}$ first. By \eqref{eq:recursion} with $w=s_{1}s_2, \al_i=\al_2,u=e$, we get
\begin{align*}
q^{{1/2}}\stp{\na_-}{s_1s_2s_1}|_e&=\frac{q-1}{1-e^{\alpha_1}}\stp{\na_-}{s_1s_2}|_e-\frac{e^{\alpha_1}-q}{1-e^{-\alpha_1}}\stp{\na_-}{s_1s_2s_1}|_{s_1}\\
&=\frac{q-1}{1-e^{\alpha_1}}q^{-1}(q-1)^2(1-e^{\alpha_3})-\frac{e^{\alpha_1}-q}{1-e^{-\alpha_1}}(1-q^{-1})(1-e^{\alpha_2})(q-e^{-\alpha_1})\\
&=(1-q^{-1})(q^2+1+qe^{\alpha_3}-qe^{\alpha_2}-q-1e^{\alpha_1}).
\end{align*}
By Lemma \ref{lem:sheafdual}, we can compute $\stp{\na_+}{s_1s_2s_1}|_e$, which, together with the formula $\na_2=\na_+-\omega_1$, can be used to get  
\[\stp{\na_2}{s_1s_2s_1}|_e=e^{\alpha_3}(q^{{1/2}}-q^{-{1/2}})(q^{-1}+q+e^{-\alpha_3}-e^{-\alpha_2}-1-e^{-\alpha_1}).\]
Finally, from $\na_1=\na_-+\omega_2$, we can get \[\stp{\na_1}{s_1s_2s_1}|_e=e^{\alpha_3}(q^{{1/2}}-q^{-{1/2}})(q+q^{-1}+e^{\alpha_3}-e^{\alpha_2}-1-e^{\alpha_1}).\]
Therefore,
\begin{align*}
f_{s_2s_1s_2}^{\na_2\leftarrow \na_1}&=\frac{\stp{\na_2}{s_1s_2s_1}|_e-\stp{\na_1}{s_1s_2s_1}|_e}{\stp{\na_1}{e}|_e}\\
&=(q^{{1/2}}-q^{-{1/2}})\frac{e^{\alpha_3}(e^{-\alpha_3}-e^{-\alpha_2}-e^{-\alpha_1}-e^{\alpha_3}+e^{\alpha_2}+e^{\alpha_1})}{(1-e^{\alpha_1})(1-e^{\alpha_2})(1-e^{\alpha_3})}\\
&=(q^{{1/2}}-q^{-{1/2}}).
\end{align*}

\subsubsection*{Acknowledgments}
C.S. benefited a lot from various discussions with R. Bezrukavnikov, A. Okounkov, S. Riche and E. Vasserot. He would also like to thank the I.H.E.S. for providing excellent research environment. C.Z. would like to thank C. Lenart for helpful conversations. A part of the paper was prepared when  G.Z. was affiliated to the Institute of Science and Technology Austria, Hausel Group, supported by
the Advanced Grant Arithmetic and Physics of Higgs moduli spaces No. 320593 of the European Research Council. Another part was prepared when G.Z. was under the support of the Australian Research Council via the award DE190101222. We thank the anonymous referee for many useful suggestions.

\bibliographystyle{halpha}

\newcommand{\arxiv}[1]
{\texttt{\href{http://arxiv.org/abs/#1}{arXiv:#1}}}
\newcommand{\doi}[1]
{\texttt{\href{http://dx.doi.org/#1}{doi:#1}}}
\renewcommand{\MR}[1]
{\href{http://www.ams.org/mathscinet-getitem?mr=#1}{MR#1}}

\end{document}